\documentclass [10 pt, letterpaper,openany]{article}

  \usepackage[latin1]{inputenc}
 \usepackage[english]{babel}
 \usepackage{amsfonts}
 \usepackage{amsmath}
 \usepackage{amssymb} 
 \usepackage{latexsym}
 \usepackage{amsthm}
 \usepackage{ragged2e}
 \usepackage{graphicx}
 \usepackage{calc}
\newlength{\depthofsumsign}
\setlength{\depthofsumsign}{\depthof{$\sum$}}

 \graphicspath{{./figs/}}
 \usepackage{vmargin}
\usepackage{pgfplots,tikz}
\usepackage{setspace}
\usepackage{stackengine}
\setstackgap{S}{1.3pt}
\usepackage{float}
\usepackage{enumerate} 
\usepackage{relsize}
\usetikzlibrary{arrows}

\pagestyle{plain}
\usepackage [ all ]{xy}
\usepackage[titletoc]{appendix}
\usepackage{hyperref}
\numberwithin{equation}{section}
\usepackage{empheq}
\hypersetup{
    colorlinks=true,
    linkcolor=blue,
    filecolor=magenta,      
    urlcolor=cyan,
    bookmarks,
    citecolor=red,
}
\urlstyle{same}
\usepackage[nottoc,notlot,notlof]{tocbibind}
\usepackage{pdfpages}
\usepackage{wallpaper}
\usepackage{enumitem}
\makeatletter

\DeclareSymbolFont{symb}{OMS}{lmsy}{m}{n}
\DeclareMathSymbol{\emptysett}{\mathord}{symb}{"3B}
\renewcommand{\emptyset}{\emptysett}

\theoremstyle{plain}
\newtheorem{theorem}{Theorem}[section] 

\theoremstyle{definition}
\newtheorem{prop}[theorem]{Proposition}
\newtheorem{lemma}[theorem]{Lemma}
\newtheorem{definition}[theorem]{Definition}
\newtheorem{rem}[theorem]{Remark}

\newcommand{\ds}{\displaystyle}

\newcommand{\N}{\ensuremath{\mathbb N }}

\newcommand{\R}{\ensuremath{\mathbb R }}

\newcommand{\C}{\ensuremath{\mathbb{C} }}

\newcommand{\sn}[2]{\left\|#1\right\|_{#2}}
\newcommand{\lsim}{\ensuremath\mathrel{\raisebox{1pt}{\stackunder{$<$}{\rotatebox{-0}{\resizebox{9pt}{2pt}{$\boldsymbol{\sim}$}}}}}}
\newcommand{\Imag}{\operatorname{Im}}
\newcommand{\rmi}{\mathrm{i}}
\newcommand{\rmd}{\mathrm{d}}
\newcommand{\bx}{\mathbf{x}}
\newcommand{\bk}{\mathbf{k}}
\newcommand{\bF}{\mathbf{F}}
\newcommand{\bG}{\mathbf{G}}
\newcommand{\bE}{\mathbf{E}}
\newcommand{\bU}{\mathbf{U}}
\newcommand{\bH}{\mathbf{H}}
\newcommand{\bP}{\mathbf{P}}
\newcommand{\bM}{\mathbf{M}}
\newcommand{\bD}{\mathbf{D}}
\newcommand{\bB}{\mathbf{B}}
\newcommand{\be}{\mathbf{e}}
\newcommand{\bh}{\mathbf{h}}
\newcommand{\bpsi}{\mathbf{\psi}}
\newcommand{\bphi}{\mathbf{\phi}}
\newcommand{\bp}{\mathbf{p}}
\newcommand{\bm}{\mathbf{m}}
\newcommand{\bbA}{\mathbb{A}}
\newcommand{\curl}{\operatorname{curl}}

\renewcommand{\emptyset}{\emptysett}

\setlength{\evensidemargin}{2.8 cm}
\setlength{\oddsidemargin}{2.8 cm}
\setlength{\textwidth}{16 cm}
\setlength{\topmargin}{2 cm}
\setlength{\textheight}{22.7cm}

\begin{document}
\title{Long time behaviour of the solution of Maxwell's equations in dissipative generalized Lorentz materials (I) A  frequency dependent Lyapunov function approach}

\author{Maxence Cassier$^{a}$, Patrick Joly$^{b}$ and Luis Alejandro Rosas Mart\'inez$^{b}$  \\ \ \\
{
\footnotesize $^a$ Aix Marseille Univ, CNRS, Centrale Marseille, Institut Fresnel, Marseille, France }\\ 
{\footnotesize $^b$ ENSTA / POEMS$^1$, 32 Boulevard Victor, 75015 Paris, France}\\ 
{\footnotesize (maxence.cassier@fresnel.fr, patrick.joly@inria.fr, alejandro.rosas@ensta-paris.fr)}}
\footnotetext[1]{POEMS (Propagation d'Ondes: Etude Math\'ematique et Simulation) is a mixed research team (UMR 7231) between CNRS (Centre National de la Recherche Scientifique), ENSTA Paris (Ecole Nationale Sup\'erieure de Techniques Avanc\'ees) and INRIA (Institut National de Recherche en Informatique et en Automatique).}

	\maketitle
	\begin{abstract}

It is well-known that electromagnetic dispersive structures such as metamaterials can be modelled by generalized Drude-Lorentz models. The present paper is the first  of two articles dedicated to dissipative generalized Drude-Lorentz open structures. We wish to quantify the loss in such media in terms of the long time decay rate of the electromagnetic energy for the corresponding Cauchy problem. By using an approach based on frequency dependent Lyapounov estimates,  we show that this decay is polynomial in time.  These results extend to an unbounded structure the ones obtained  for bounded media in \cite{Nicaise2020} via a quite different method based on the notion of cumulated past history and semi-group theory. A great advantage of the approach developed here is to be less abstract and directly connected to the physics of the system via energy balances.
\end{abstract}	

{\noindent \bf Keywords:} Maxwell's equations, passive electromagnetic media,  dissipative generalized Lorentz models, long time electromagnetic energy decay rate, frequency-dependent  Lyapunov   estimates.

\section{Introduction and motivation} 
The study of the long time behaviour of solutions of dispersive and dissipative models for linear wave propagation has already been extensively studied in the literature, primarily for applications in visco-elasticity and more recently in electromagnetism. The subject has recently known a regain of interest related to metamaterials. We can refer for instance, in electromagnetism, to the article \cite{cas-jol-kach-17} in which we presented a systematic construction of mathematical models compatible with physically motivated principle such as causality and passivity (see also \cite{GralakTip,cas-mil-17,Welters}).  The common point to all these models lies in that the constitutive laws include memory effects corresponding to time convolution nonlocal operators that induce dispersion (the velocity of waves is frequency dependent) and dissipation (the energy decay of the solution) that are in often intimately related.
\\ [6pt]
For such models one of the most natural question is the study of the long time behaviour of the corresponding Cauchy problem: prove that the energy of the  solution tends to 0 when $t$ tends to $+ \infty$ and study the rate of decay.  This is of course closely related to the theory of control and stabilization of dynamical systems where one commonly distinguishes the notion of exponential stability (which corresponds to an exponential decay of the energy) and polynomial stability (the energy decays as the inverse of a positive power of $t$).
\subsection{Maxwell's equations in dispersive media}
\subsubsection{General features}
Maxwell's equations relate the electric and magnetic inductions $\textbf{D}(\bx,t)$ and $\textbf{B}(\bx,t)$ ($\bx\in \mathbb{R}^3$ and $t>0$ are respectively the  space and variables)  to the the electric and magnetic fields $\textbf{E}(\bx,t)$ and $\textbf{H}(\bx,t)$: 
\begin{equation} \label{maxwell3D}
\left\{ \begin{array}{l}
\partial_t\,\textbf{D}-\nabla\times\textbf{H} =0,\\ [8pt]
\partial_t\,\textbf{B}+\nabla\times\textbf{E}=0.
\end{array} \right.
\end{equation}
On the other hand, one defines the electric polarization and magnetization by 
\begin{equation} \label{CL1}
\left\{ \begin{array}{ll}
\textbf{D} = \varepsilon_0 \, \textbf{E} + \textbf{P}_{\mathrm{tot}}, &  \quad \textbf{P}_\mathrm{tot} : \mbox{electric polarization}, \\ [8pt]
\textbf{B} = \mu_0 \, \textbf{E} + \textbf{M}_{\mathrm{tot}}, &  \quad \textbf{M}_{\mathrm{tot}} : \mbox{magnetization} .
\end{array} \right.
\end{equation}
where $\varepsilon_0>0$ and $\mu_0>0$ are the vacuum permittivity 
and  permeability.
The above equations are completed by the following non local constitutive laws (we consider the case of a homogeneous medium)
\begin{equation} \label{CL2}
\left\{ \begin{array}{ll}
\ds \textbf{P}_{\mathrm{tot}}(\cdot, t) = \varepsilon_0 \, \int_0^t \chi_e(t-s) \; \textbf{E}(\cdot, s) \; \rmd s, \\ [12pt]
\ds \textbf{M}_{\mathrm{tot}}(\cdot, t) =\mu_0 \,  \int_0^t \chi_m(t-s) \; \textbf{H}(\cdot, s) \; \rmd s, 
\end{array} \right.
\end{equation}
where $\chi_e$ and $\chi_m$ are the electrical and magnetic susceptibilities of the material
(convolutions products being understood in the distributional sence, see for e.g. \cite{cess-96,Zem-72}, for $(\chi_e, \chi_m)$  not in $L^1$).\\ [12pt] 
In the Fourier-Laplace domain 
$$
\textbf{E}(\cdot, t) \quad \longrightarrow \quad \hat{\textbf{E}}(\cdot, \omega) =  \int_0^{+\infty} \textbf{E}(\cdot, t) \; e^{\rmi \omega t} \; \rmd t, \quad \Imag\, \omega > 0,
$$
\eqref{CL1} and \eqref{CL2} reduce to 
\begin{equation} \label{CLomega}
\left\{ \begin{array}{ll}
\hat{\textbf{D}}(\cdot, \omega) = \varepsilon(\omega)  \, \hat{\textbf{E}}(\cdot, \omega),\\ [8pt]
\hat{\textbf{B}}(\cdot, \omega) = \mu(\omega)  \, \hat{\textbf{H}}(\cdot, \omega),
\end{array} \right.
\end{equation}
where the complex permittivity $\varepsilon(\omega)  $ and the complex permeability  $\mu(\omega)$  are given in terms of the Fourier-Laplace transform of the susceptibility functions:
\begin{equation}\label{eq.complexperm}
\varepsilon(\omega)  = \varepsilon_0 \, \big( 1 + \hat \chi_e (\omega)\big) \quad \mbox{ and } \quad  \mu(\omega)  = \mu_0 \, \big( 1 + \hat \chi_m (\omega)\big).
\end{equation}
where $\varepsilon(\omega)\to \varepsilon_0 $ \mbox{  and } $\mu(\omega)\to \mu_0 $ when $\omega\to \infty$ in $\C^+:= \{ \omega \in \C \, / \, \Imag \, \omega > 0 \big\}$. In other words, the material behaves as the vacuum at high frequencies.
In the frequency domain, passivity, causality and the high frequency behaviour are traduced by the fact that (see \cite{Ber-11,cas-jol-kach-17,cas-mil-17,Welters,Zem-72} for more details)
\begin{equation}\label{eq.mathpass}
\mbox{$\omega \mapsto \omega \, \varepsilon(\omega)$ and $\omega \mapsto \omega \, \mu(\omega)$ 
	are {\it Herglotz functions,}}
\end{equation} that is to say analytic functions from $\C^+$ into its closure $\overline{\C^+}$. Furthermore as  the susceptibilities $\chi_{e}$ and $\chi_m$ are real-valued functions in the time domain,  the permittivity and permeability satisfy $\overline{\varepsilon(\omega)} = \varepsilon(- \overline \omega)$ and $\overline{\mu(\omega)} = \mu(- \overline \omega), \ \forall \; \omega \in \C^+$.
\begin{rem}\label{rem_passivity} [About the notion of passivity] The condition \eqref{eq.mathpass} is the condition which is most often used to define passive materials: we called it {\it mathematical passivity} in \cite{cas-jol-kach-17}. In the same article, we define the related notion {\it physical passivity} which is associated to the Cauchy problem associated to (\ref{maxwell3D}., \ref{CL1}, \ref{CL2}), seen as an evolution problem with respect to the electromagnetic field $(\mathbf{E}, \mathbf{H}$). In other words, we look at the free evolution of the system i.e in the absence of external sources. More precisely a material is  {\it physical passive} if and only if the electromagnetic energy 
		\begin{equation} \label{def_energyEM0}
		\mathcal{E}(t)\equiv \mathcal{E}(\mathbf{E}, \mathbf{H}, t):=\frac{1}{2}\Big(\varepsilon_0\, \int_{\R^3} |{\mathbf{E(\bx,t)}}|^2  \rmd \bx+\mu_0\, \int_{\R^3}|{\mathbf{H(\bx,t)}}|^2 \rmd \bx\Big).
		\end{equation}
can never exceeds its value at $t=0$, that is to say
		\begin{equation} \label{def_physic alpassivity}
\forall \; t \geq 0, \quad \mathcal{E}(t) \leq \mathcal{E}(0).
\end{equation}
It is emphasized in \cite{cas-jol-kach-17} that the above property does not mean that the electromagnetic energy is a decreasing function of time. \end{rem} 
\subsubsection{The generalized Lorentz media}
In this paper, we shall concentrate of the most well-known subclass of models: the (dissipative) generalized Lorentz media. Such model will be called {\it local} because of the relationship between ${\bf D}$ and ${\bf E}$ or ${\bf B}$ and ${\bf H}$ can be written with ordinary differential equations. More precisely, these correspond to
\begin{equation} \label{decomposition}
\textbf{P}_{\mathrm{tot}} = \varepsilon_0 \, \sum_{j=1}^{N_e} \Omega_{e,j}^2 \; \textbf{P}_j , \quad \textbf{M}_{\mathrm{tot}}  = \mu_0 \, \sum_{\ell=1}^{N_m} \Omega_{m,\ell}^2 \; \textbf{M}_\ell, 
\end{equation}
where each $\textbf{P}_j$ (resp. each $\textbf{M}_\ell$) is related to ${\bf E}$ (resp. ${\bf H}$) by an ordinary differential equation 
\begin{equation} \label{ODE_Lorentz}
\left\{ 
\begin{array}{ll}
\partial_t^2\,\mathbf{P}_j+\alpha_{e,j}\,\partial_t\,\mathbf{P}_j+\omega_{e,j}^2\,\mathbf{P}_j=\mathbf{E}, & 1 \leq j \leq N_e, \\[12pt]
\partial_t^2\,\mathbf{M}_\ell+\alpha_{m,\ell}\,\partial_t\,\mathbf{M}_\ell+\omega_{m,\ell}^2\,\mathbf{M}_\ell=\mathbf{H}, & 1 \leq \ell \leq N_m,
\end{array}
\right.
\end{equation}
completed by $0$ initial conditions
\begin{equation} \label{IS_Lorentz}
\left\{ 
\begin{array}{ll}
\mathbf{P}_j (\cdot,0)= \partial_t\,\mathbf{P}_j (\cdot,0) =0, & 1 \leq j \leq N_e,\\[12pt]
\mathbf{M}_\ell (\cdot,0)= \partial_t\,\mathbf{M}_\ell (\cdot,0) =0, & 1 \leq \ell \leq N_m.
\end{array}
\right.
\end{equation}
In the above equations, the (real) coefficients $\Omega_{e,j},\Omega_{m,\ell}, \omega_{e,j},\omega_{m,\ell}$ are supposed to satisfy
\begin{equation} \label{hypomega}
\Omega_{e,j} > 0,  \quad \omega_{e,j} \geq 0, \quad 1 \leq j \leq N_e,  \quad  \Omega_{m,\ell} > 0, \quad  \omega_{m,\ell} \geq 0,\quad 1 \leq \ell \leq N_m \; ,
\end{equation}
while for stability/dissipation issues the coefficients $(\alpha_{m,\ell}, \alpha_{m,\ell})$ must be positive 
\begin{equation} \label{hypalpha}
\alpha_{e,j} \geq 0, \quad 1 \leq j \leq N_e,  \quad \alpha_{m,\ell} \geq 0\quad 1 \leq \ell \leq N_m.
\end{equation}
Moreover, the reader will easily check that one can assume without any loss of generality that the couples $(\alpha_{e,j}, \omega_{e,j})$ (resp. $(\alpha_{m,\ell}, \omega_{m,\ell})$) are all distinct the ones from the others. \\ [12pt] 
Note that (\ref{decomposition}, \ref{ODE_Lorentz}) corresponds to 
\begin{equation} \label{epsmuLorentz}
(a) \quad \varepsilon(\omega) = \varepsilon_0 \, \Big( 1 - \sum_{j=1}^{N_e} \frac{\Omega_{e,j}^2}{\omega^2 + i \, \alpha_{e,j} \, \omega - \omega_{e,j}^2}\Big) ,  \quad (b) \quad \mu(\omega) = \mu_0 \, \Big( 1 - \sum_{\ell=1}^{N_m} \frac{\Omega_{m,\ell}^2}{\omega^2 + i \, \alpha_{m,\ell} \, \omega - \omega_{m,\ell}^2}\Big).  
\end{equation}
Straightforward calculations show that (\ref{decomposition}, \ref{ODE_Lorentz}) are equivalent to \eqref{CL2} with
\begin{equation} \label{Lorentz-Kernels}
\chi_e = \sum_{j=1}^{N_e} \; \Omega_{e,j}^2 \; \chi_{e,j}, \quad \chi_m = \sum_{\ell=1}^{N_m} \Omega_{m,l}^2 \; \chi_{m,\ell}
\end{equation}
where the expression of each $\chi_{\nu,j}$ for  $\nu=e,m$ and $j\in \{ 1,\ldots, N_\nu\}$ is given by
		\begin{equation} \label{expchie} 
		 \begin{array}{lll}
		(i) & 	\chi_{\nu,j}(t) =2 \, \delta_{\nu,j}^{-1} \; \sinh \big(\delta_{\nu,j} \, t/2\big) \;  e^{- ({\alpha_{\nu,j} \,t}/{2}) }, \;  & \mbox {if }  \alpha_{\nu,j} > 2 \, \omega_{\nu,j}, \\ [8pt] 
		(ii) & 	\chi_{\nu,j}(t) = 2 \, \delta_{\nu,j}^{-1} \; \sin \big(\delta_{\nu,j} \, t/2 \big) \;  e^{- ({\alpha_{\nu,j} \,t}/{2}) }, \;  & \mbox {if }  \alpha_{\nu,j} < 2 \, \omega_{\nu,j},\\ [8pt] 
		(iii) & 		\chi_{\nu,j}(t) =t \;   e^{- (\alpha_{\nu,j} \,t/2)}, & \mbox {if }  \alpha_{\nu,j} = 2 \, \omega_{\nu,j},
			\end{array} \end{equation} 
where we have set 
			$\delta_{\nu,j} = \sqrt {\alpha_{\nu,j}^2 - 4 \, \omega_{\nu,j}^2} \ \mbox{ if } \alpha_{\nu,j} \geq 2 \, \omega_{\nu,j}  \mbox{ and }  \delta_{\nu,j} = \sqrt {4 \, \omega_{\nu,j}^2 - \alpha_{\nu,j}^2 }  \ \mbox{ if }  \alpha_{\nu,j} < 2 \, \omega_{\nu,j}.$ \\ [12pt]
	Note that each kernel {$\chi_{\nu,j}$ is not monotonous with respect to time as soon as $\omega_{\nu,j} > 0$ or $\alpha_{\nu,j}> 0$ and tends to $0$ when $t \rightarrow + \infty$ if (and only if) $\alpha_{\nu,j} > 0$ (see figure \ref{fig_noyaux}, first two pictures). As a consequence $\chi_\nu$ does not tend to $0$ at infinity as soon as one of the $\alpha_{\nu,j}$ vanishes (see figure \ref{fig_noyaux}, third picture).}
		\begin{figure}[h!]\label{fig_noyaux}	
		\begin{center}
			\includegraphics[scale=0.1875]{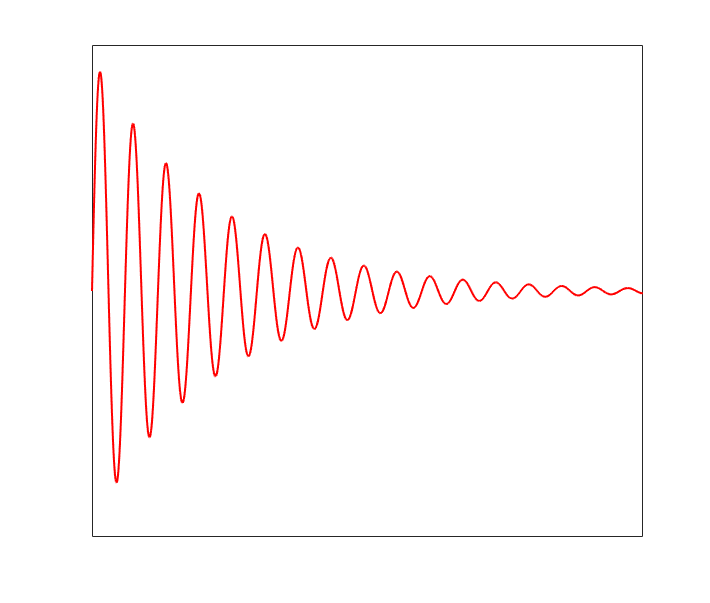}   \includegraphics[scale=0.19]{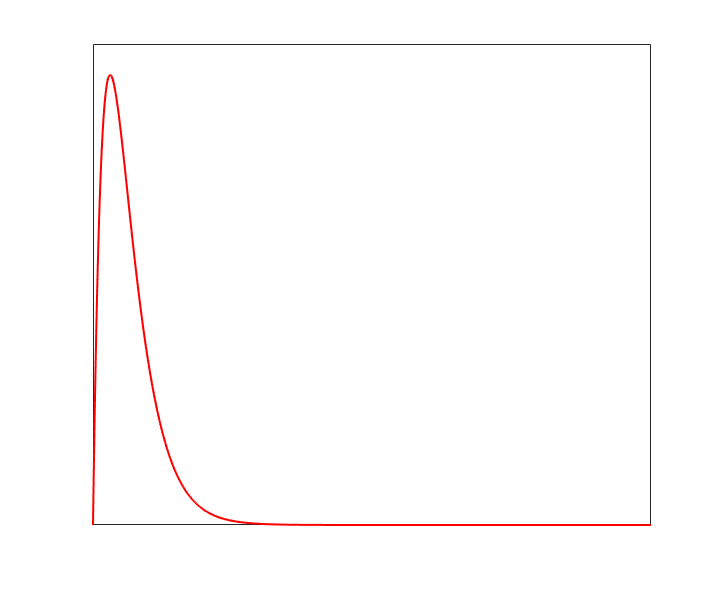} \includegraphics[scale=0.2]{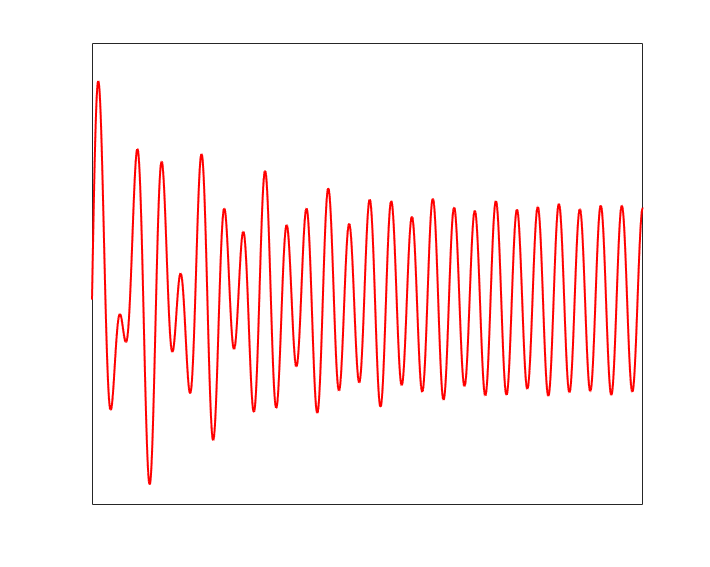}
		\end{center}
		\caption{Kernels as functions of time. Left: $\chi_{\nu,j}$  for  $\alpha_{\nu,j} > 2 \, \omega_{\nu,j} >0$. Center: $\chi_{\nu,j}$ for  $0 < \alpha_{\nu,j} < 2 \, \omega_{\nu,j}$. Right: $\chi_{\nu}$  for  $N_\nu = 2, \alpha_{\nu,1} = 0,  \, \alpha_{\nu,2} > 0$.}
	\end{figure}

\subsection{A brief review of the literature}
As said in introduction, there are already many existing results on the long time behaviour of the solution of dissipative dispersive systems. In this paragraph, we discuss in some detail some of the most {significant} contributions that are in close connection with the present work. \\ [12pt]
In the article \cite{Figotin}, the authors considered a very abstract evolution model
that in particular includes \eqref{CL2} with $\chi_m = 0$ and a function  $\chi_e$ for which $\omega \, \hat{\chi}_e(\omega)$  is a  Herglotz  function.
If one assumes that this function satisfies  the additional assumption that
 \begin{equation} \label{limits} \mbox{for a. e. }\omega \in \R, \quad\gamma_e (\omega)  := \lim_{\zeta \in \C^+\rightarrow \, \omega} \; \Imag \zeta \, \hat{\chi}_e (\zeta) \mbox{ exists,}
 \end{equation} 
 (which is satisfied by most of dispersive materials in physics and in particular by  generalized Lorentz materials) then 
 the  sufficient dissipation  condition (6.4) of \cite{Figotin}  is equivalent to 
\begin{equation} \label{dissFigotin}
 \mbox{for a. e. }\omega \in \R, \quad \gamma_e(\omega) > 0, \quad \gamma_e^{-1}\in L^1_{loc}(\R).
\end{equation}
When applied to the generalized Lorentz model {(see appendix \ref{sec-append-figotin})}, namely when $\varepsilon(\omega)$ is given by \eqref{epsmuLorentz}
the above condition corresponds to 
\begin{equation} \label{dissFigotinLorentz}
\exists \; 1 \leq j \leq N_e \mbox{ such that } \omega_{e,j} = 0 \mbox{ and } \alpha_{e,j} > 0 .
\end{equation}

\noindent Under this condition, it is proven that the electromagnetic energy (see definition \ref{def_energyEM0})
tends to $0$ for any initial data $(\mathbf{E}_0, \mathbf{H}_0)$ in $L^2(\R^3)^3 \times L^2(\R^3)^3$:
\begin{equation} \label{FigotinDecay}
\forall \; (\mathbf{E}_0, \mathbf{H}_0) \in L^2(\R^3)^3 \times L^2(\R^3)^3, \quad 
\lim_{t\rightarrow + \infty} \mathcal{E}(t)= 0.
\end{equation}
This result is proven in \cite{cas-jol-kach-17} (section 4.4) in a much more pedestrian way on a toy problem  corresponding the Drude model with $N_e=N_m = 1$, $\omega_{e,1}=\omega_{m,1}=0$ and $\alpha_{e,1}, \alpha_{e,m} >0$.  \\ [10pt]
In the above references, the question of the rate of convergence to $0$ of the electromagnetic energy is not discussed. 
 This question is addressed in a series of work by S. Nicaise and her collaborator C. Pignotti  \cite{Nicaise2012}, \cite{Nicaise2020} (which generalizes \cite{Nicaise2012}) (see also \cite{Nicaise2021} for local dissipation models).
 These works consider the initial value problem in a bounded domain $\Omega \subset \R^3$ with perfectly conducting boundary conditions 
 for which they prove {\it polynomial stability} in the sense mentioned below (see estimate \eqref{NicaiseEstimates}).
The conditions for polynomial stability in \cite{Nicaise2020} are two fold:
\begin{itemize}
	\item[(i)]
The first condition is expressed in the time domain, more precisely in terms of regularity and decay properties of the kernels ${\chi}_e$ and ${\chi}_m$:
\begin{equation} \label{DecayNicaise}
\chi = {\chi}_e \mbox{ or } \chi_m \mbox{ satisfies } \chi \in C^2(\R^+), \quad 
\lim_{t \rightarrow + \infty} \chi'(t) = 0, \quad |\chi''(t)| \leq C \; e^{-\delta t} \ \mbox{(with } C , \delta > 0.)
\end{equation}
Note that \eqref{DecayNicaise} implies {that it exists $C_1>0$ such that $|\chi'(t)| \leq C_1 \, \delta^{-1} e^{-\delta t} $  and  $ |\chi(t)| \leq C_1$  for $t\geq 0$ (see e.g.  the appendix of \cite{Nicaise2020} for the  details). Hence, it follows (by integrations by parts of the Fourier-Laplace integral) that
\begin{equation}\label{eq.regulnicaise}
 \omega \,   \widehat{\chi}(\omega)=\rmi \, \widehat{\chi'}(\omega)+ 
\rmi \, \chi(0) \quad \mbox{ and }  \quad  \omega \,   \widehat{\chi}(\omega)=-\big(\widehat{\chi''}(\omega)+\chi'(0)\big) \, \omega^{-1}+ \rmi \, \chi(0),  \quad \forall  \; \omega\in \C^+ 
\end{equation}
where, as the Laplace-Fourier transform of a $L^1$ causal function, $\omega \mapsto  \widehat{\chi'}(\omega)$ and $\omega \mapsto  \widehat{\chi''}(\omega)$ are analytic on $\C^+$, continuous on $\overline{\C}^+$ and decay to $0$  when  $\omega\to \infty $ in  $\overline{\C^+}$.
Thus, using \eqref{eq.regulnicaise}, one observes that   \eqref{DecayNicaise} implies that the functions  $\omega \mapsto \omega \, \hat{\chi}_e(\omega)$ and $ \omega \mapsto \omega \, \hat{\chi}_m(\omega)$ are analytic on $\C^+$ and can be extended as  continuous and bounded functions in the closed upper half-plane $\overline {\C^+}$. Furthermore,  $\hat{\chi}=\hat{\chi}_e, \hat{\chi}_m$, one has  $\omega\hat{\chi}(\omega)= \rmi \chi(0) - \chi'(0) \, \omega^{-1}+ o(\omega^{-1})$ when $\omega\to \infty $ in  $\overline{\C^+}$. 
\\
	\item[(ii)] The second condition  is expressed in the frequency domain for real frequencies. It also has two parts. The first one is a strict positivity condition 
	\begin{equation} \label{Passivity-Dissipation-Nicaise}
{\forall \, \omega \in \R^*, \quad \gamma_e(\omega) =: \Imag \, \omega \, \hat{\chi}_e (\omega) > 0, \quad \gamma_m(\omega) :=\Imag \, \omega \, \hat{\chi}_m (\omega) >  0.}
\end{equation}
which is completed the additional assumption
\begin{equation} \label{Passivity-Dissipation-Nicaise-HF}
\exists \; \omega_0, q, C > 0 \mbox{ such that } \quad  |\omega|\geq \omega_0 \quad \implies \quad \gamma_e(\omega) \geq  C \, |\omega|^{-q},  \ \gamma_m(\omega) \geq   C \, \omega^{-q},
\end{equation}
that means the the (strictly positive) functions $\gamma_e(\omega)$ and $\gamma_e(\omega)$ do not decay too fast at infinity.}
\end{itemize}
Under assumptions \eqref{DecayNicaise}, \eqref{Passivity-Dissipation-Nicaise} and \eqref{Passivity-Dissipation-Nicaise-HF}, the authors of  \cite{Nicaise2020}  prove, for $H^1$ initial data, decay estimates of the form
\begin{equation} \label{NicaiseEstimates}
\mathcal{E}(t) \leq C_q \; t^{-\frac{2}{q}} \; \big( \|\bE_0\|_{H^1(\Omega)}^2+ \|\bH_0\|_{H^1(\Omega)}^2\big).
\end{equation}
When specified to the case of the Lorentz kernels, it is easy to see (with \eqref{Lorentz-Kernels} and \eqref{expchie}) that, in addition to the non-negativity of the coefficients $(\alpha_{e,j}, \alpha_{m, \ell})$, the conditions \eqref{DecayNicaise},  \eqref{Passivity-Dissipation-Nicaise} and \eqref{Passivity-Dissipation-Nicaise-HF} (which is then satisified for $q=2$) correspond  to the following strong dissipation condition. 
\begin{definition} [Strong Dissipation for Lorentz models] \label{StrongDissipation}
\begin{equation} \label{SD} \forall \; 1 \leq j \leq N_e, \quad \alpha_{e,j} >0, \quad  
\forall \; 1 \leq \ell \leq N_m, \quad \alpha_{m,\ell} >0. \end{equation} 
\end{definition}
\noindent It is worthwhile mentioning briefly the techniques of proof for \eqref{FigotinDecay} in \cite{Figotin} and \eqref{NicaiseEstimates} in \cite{Nicaise2020}. \\ [12pt]
The technique used in \cite{Figotin} is difficult to describe in a few lines but we can give some of the main ideas. The authors use an augmented formulation of the evolution problem where, typically, the convolutions \eqref{CL2} are hidden behind the introduction of new unknowns. In the very abstract framework of \cite{Figotin}, these new unknowns can be seen as elements of an adequately constructed Hilbert space. In the case of the Maxwell's equations, these new unknowns are functions of $(\bx,t)$ but also of additional variable $\xi$ that {varies in $\R$ (or a subset of $\R$), see \cite{Figotin} and also \cite{cas-jol-kach-17,GralakTip}}. The fundamental property of the obtained ``augmented" system is that it is conservative: in other words, it is an evolution problem associated with a self-adjoint operator $\mathcal{A}_c$ in the augmented Hilbert space:
$$
\frac{d \mathcal{U}_c}{dt} + \rmi \, \mathcal{A}_c \, \mathcal{U}_c= 0.
$$
This allows to use tools from spectral theory of self-adjoint operators and to obtain an adequate (Fourier-like)  integral representation of the solution of the original problem. The
convergence result \eqref{FigotinDecay} then appears as a consequence of the spectral theorem and the use of  Riemann-Lebesgue theorem on compact sets, that is justified by the assumption \eqref{dissFigotin}. \\ [12pt]
Although quite different, the approach of \cite{Nicaise2020} also starts from another augmented formulation of the original system in which the convolutions \eqref{CL2} are again replaced by additional  purely differential equations. The construction of this
augmented model relies of the very nice trick of Dafermos \cite{Daf-70} for treating viscoelasticity. This implies to introduce 
an additional time variable $s$ (that plays a similar role than  $\xi$ in \cite{cas-jol-kach-17}) and additional unknowns: the so called cumulated past histories of the fields ${\bf E}$ and ${\bf H}$. The convolution operators are then replaced by non homogeneous transport equations in the $(s,t)$ plane. Contrary to \cite{Figotin}, the augmented system is not conservative and is written as an autonomous evolution problem involving an operator $\mathcal{A}_a$ which is not self-adjoint but maximal dissipative: 
$$
\frac{d \mathcal{U}_a}{dt} + \rmi \, \mathcal{A}_a \, \mathcal{U}_a = 0
$$
This problem arises from the application of the theory of semi-groups. In particular, estimates \eqref{NicaiseEstimates} are obtained by the application of  theorem 2.4  of  \cite{Bor-10}. Applying this theorem essentially 
 requires to establish localization results for the spectrum of $\mathcal{A}_a$ (inside $\C^-=\{ \omega \in \C \mid \operatorname{Im}(\omega)<0\}$) and appropriate estimates for its resolvent, using the conditions (\ref{eq.regulnicaise},\ref{Passivity-Dissipation-Nicaise},\ref{Passivity-Dissipation-Nicaise-HF}).
 \subsection{About Lyapunov techniques}
To conclude this short bibliographical review, it is worthwhile mentioning that other stability results for dispersive/dissipative results have also been obtained via the Lyapunov technique:  roughly speaking, the idea is to
 derive some differential inequality (in time) for a certain functional of the solution, namely the Lyapunov funtion $\mathcal{L}$,
 which dominates the energy (or some function of the energy). The stability estimates are then obtained from the time integration of the differential inequality.
 \\ [12pt]
 In the context of dissipative systems with memory, this type of technique was introduced in particular to show {exponential stability} in the theory of linear viscoelasticity \cite{FabrizioMorro87} (see also \cite{FabrizioMorro97,Riv-04} in the context of electromagnetism) and used  in \cite{Riv-04}  where the authors establish polynomial stability estimates associated to various damping phenomena in electromagnetism to model, for instance, a rigid electric conductor or the ionized atmosphere (see also remark \ref{rem_passivity2}). However their technique cannot be applied to our case because it requires sign properties of the derivatives of convolution kernels, which clearly prevents from time oscillations as in \eqref{Lorentz-Kernels}. To give an idea about why this kind of assumption appears, let us come back  the system (\ref{maxwell3D}, \ref{CL1}, \ref{CL2}) in the whole space $\R^3$, provided that the kernels $\chi_e$ and $\chi_m$ are of class $C^3$ on $\R^+$. Adapting the techniques of  \cite{FabrizioMorro87} {to dispersive Maxwell's equations}, one can show (formally) the following identity (see appendix \ref{sec-app-Lya}) 
 \begin{equation} \label{Identitegenerale}
\frac{d}{dt} \mathcal{L}({\bf E}, {\bf H})(t) +  \mathcal{D}({\bf E}, {\bf H})(t) = 0
\end{equation}
where the Lyapunov function $\mathcal{L}({\bf E}, {\bf H})$ is a kind of augmented energy, namely
\begin{equation} \label{Lyapunovgeneral0}
\mathcal{L}({\bf E}, {\bf H})(t) = \mathcal{E}({\bf E}, {\bf H})(t)  + \mathcal{E}_{ad}({\bf E}, {\bf H})(t)
\end{equation} 
with the additional energy
\begin{equation} \label{Lyapunovgeneral}
\left| \begin{array}{lll}
\mathcal{E}_{ad}({\bf E}, {\bf H})(t) & = & \ds \frac{\varepsilon_0}{2} \, \chi_e'(t) \int_{\R^3} |\bE_p(\bx,t)|^2 \,  \rmd \bx + \frac{\mu_0 }{2} \, \chi_m'(t) \textbf{}\int_{\R^3} |\bH_p(\bx,t)|^2 \,  \rmd \bx  \\ [12pt]
& -  & \ds \frac{\varepsilon_0}{2} \int_{0}^{t} \chi_e''(t-s)  \Big(  \int_{\R^3} |\bE_p(\bx,t) - \bE_p(\bx,s) |^2 \,  \rmd \bx \Big) \, \rmd s \\ [12pt]
& -  & \ds\frac{\mu_0 }{2} \int_{0}^{t} \chi_m''(t-s) \Big(  \int_{\R^3} |\bH_p(\bx,t) - \bH_p(\bx,s) |^2 \,  \rmd \bx \Big) \, \rmd s. 
\end{array} \right. 
\end{equation} 
~\\ and the functional $\mathcal{D}({\bf E}, {\bf H}) $ is given by 
\begin{equation} \label{Dissipationfunction}
\left| \begin{array}{lll}
\mathcal{D}({\bf E}, {\bf H})(t) & = & \ds \varepsilon_0 \, \chi_e(0) \int_{\R^3}|\bE(\bx,t)|^2 \,  \rmd \bx + \mu_0  \, \chi_m(0) \int_{\R^3} |\bH(\bx,t)|^2 \,  \rmd \bx  \\ [12pt]
&-& \ds \frac{\varepsilon_0}{2} \, \chi_e''(t) \int_{\R^3} |\bE_p(\bx,t)|^2 \,  \rmd \bx - \frac{\mu_0 }{2} \, \chi_m''(t) \textbf{}\int_{\R^3} |\bH_p(\bx,t)|^2 \,  \rmd \bx  \\ [12pt]
& +  & \ds \frac{\varepsilon_0}{2} \int_{0}^{t} \chi_e'''(t-s)  \Big(  \int_{\R^3} |\bE_p(\bx,t) - \bE_p(\bx,s) |^2 \,  \rmd \bx \Big) \, \rmd s \\ [12pt]
& +  & \ds\frac{\mu_0 }{2} \int_{0}^{t} \chi_m'''(t-s) \Big(  \int_{\R^3} |\bH_p(\bx,t) - \bH_p(\bx,s) |^2 \,  \rmd \bx \Big) \, \rmd s.
\end{array} \right. 
\end{equation} 
 Sufficient conditions to ensure stability estimates and time decay results simply amount to check that
\begin{equation} \label{condition}
\mathcal{L}({\bf E}, {\bf H}) \geq \mathcal{E}({\bf E}, {\bf H}) \quad ( \; \Longleftrightarrow \;   \mathcal{E}_{ad}({\bf E}, {\bf H}) \geq 0)\quad \mbox{and} \quad \mathcal{D}({\bf E}, {\bf H}) \geq 0.
\end{equation}
It is clear on (\ref{Lyapunovgeneral}) and (\ref{Dissipationfunction}) that (\ref{condition}) lead to the (sufficient) conditions
\begin{equation} \label{conditionsepsmu}
\chi_\nu(0)\geq 0, \; \;  \chi_\nu'(t)\geq 0, \; \;  \chi_\nu''(t)\leq 0 \; \mbox{ and } \; \chi_\nu^{(3)}(t)\geq 0  \quad \forall \; t \in \mathbb{R}^+, \quad \nu = e,m.
\end{equation}
Such conditions are fulfilled for instance  if one  has
\begin{equation}\label{eq.stab}
\chi_{\nu}(0)>0, \ - \, \chi_\nu''(t) \geq \beta_\nu \; \chi_\nu'(t)\quad \mbox{and} \quad \chi_\nu'''(t) \geq - \, \beta_\nu \; \chi_\nu''(t) 
\end{equation}
which yields the existence of a constant $\delta > 0$ such that $\mathcal{D}({\bf E}, {\bf H}) \geq \delta \; 
\mathcal{L}({\bf E}, {\bf H}) $ so that immediately implies the {\it exponential stability} of Maxwell's equations in the sense that
\begin{equation} \label{ExponentialStability}
\mathcal{E}(t) \leq C \;e^{-\delta t}.
\end{equation}
An elementary example of susceptibility kernels satisfying \eqref{eq.stab}  is given by $\chi_{\nu}(t)=2-\mathrm{e}^{-t}$. 
\\ [12pt]
\noindent Unfortunately,  the conditions \eqref{condition} are useless for analysing  
the stability  of Maxwell's generalized Lorentz materials given by \eqref{ODE_Lorentz} and \eqref{hypomega},  the conditions \eqref{conditionsepsmu} are only satisfied  by non-dissipative Drude materials for which $\chi_e(t)= \Omega_e^2 \, t$ and $\chi_m(t)=  \Omega_m^2  \, t$.
\begin{rem} \label{rem_passivity2}
It is worthwhile to come back here to what we said in the remark \ref{rem_passivity} and more precisely on the possible equivalence (generally conjectured) between the two notions of {\it mathematical passivity} and {\it physical passivity}. 
What we show above is that the conditions \eqref{conditionsepsmu} are sufficient conditions for physical passivity. As a consequence, finding functions $\chi_e$ and $\chi_m$ satisfying \eqref{conditionsepsmu}  but such that the Herglotz property 
\eqref{eq.mathpass} would not hold, would provide a counter example to the equivalence. \\ [12pt]
The conditions \eqref{conditionsepsmu} are clearly reminiscent of the notion of Bernstein functions \cite{Berg-08,Schi-10}, i.e. positive continuous function $f: [0,\infty[ \to (0, \infty)$, $C^{\infty}$ on  $(0,+\infty)$ and whose derivative 
$f'$ is a completely monotonous functions, which means that the sign of the successive derivatives of $f$ alternate with the order of derivation:
\begin{equation} \label{bernstein}
\forall \; j \in \N\setminus\{0\} \quad (-1)^{j+1} \; f^{(j)}(t) \geq 0.
\end{equation}
Completely monotone functions are also characterized as the Laplace transforms of positive Borel measures on $[0, + \infty)$ (see, e.g. \cite{Berg-08,Schi-10}). It is known (see \cite{Hany-08}, Theorem 3.2 and corollary 3.14) that Bernstein functions satisfy the Herglotz property
\begin{equation} \label{herglotz_prop}
\omega \; \hat f(\omega) \mbox{ is an Herglotz  function, where  }  \hat f(\omega) \mbox{ is the Laplace-Fourier transform of } f.
\end{equation}
However, when the alternating sign property \eqref{bernstein} is only true for $j \leq 3$, which corresponds to \eqref{conditionsepsmu}, \eqref{herglotz_prop} could a priori fail.
\end{rem}

\subsection{Objectives and outline of our work} 
We revisit in this series of two papers is to revisit the stability theory of (\ref{maxwell3D}, \ref{CL1}, \ref{decomposition}, \ref{ODE_Lorentz}), that is to say Maxwell's equations in generalized dissipative Lorentz media.
For the simplicity of exposition, we shall consider the problem posed in the whole space $\R^3$ that authorizes the use of the Fourier transform in space. We have a double objective 
	\begin{itemize} 
		\item propose new constructive proofs of stability estimates based on elementary tools that avoids any use of "black box" results of  abstract mathematical theory,
		\item extend the existing results with less restrictive assumptions than those appearing in \cite{Nicaise2020} or \cite{Nicaise2021}, namely to the case of the weak dissipativity assumption \eqref{WD}. 
			\end{itemize}
		\noindent 
		\begin{definition} [Weak Dissipation for Lorentz models] \label{WeakDissipation}
			\begin{equation} \label{WD} 
			\sum_{j=1}^{N_e} \alpha_{e,j}  + \sum_{\ell=1}^{N_m} \alpha_{m,\ell} > 0.
			\end{equation}
		\end{definition} \noindent
	In this first paper, we shall restrict ourselves to the strong dissipation assumption \eqref{SD} and wish to recover in a quite explicit form the results from \cite{Nicaise2020} with a technique inspired by the Lyapunov approach. Compared to more standard Lyapunov methods, {we introduce} frequency dependent Lyapunov functions (understand  spatial frequency or wave numbers) that in particular allows to distinguish the respective roles of low and high frequencies 
	(the role of low frequencies, that does not appear in a bounded domain as in \cite{Nicaise2020} is due to the fact that we work in an unbounded domain). \\ [12pt]
	In the second paper, we shall use, in addition to the Fourier transform, a spectral representation of the solution that will permit us to derive sharp asymptotic long time estimates. This approach is less tricky than the frequency dependent Lyapunov approach but is technically more involved because non self-adjoint operators have to be handled. It also has the interest to only assume the weak dissipativity condition \eqref{WD} and to provide optimal results.
	We also think that, in both papers, the arguments we shall use are quite close to physical  notions (plane waves, dispersion analysis, energy balance)  which should make these papers more accessible to physicists. \\ [12pt]
The outline of the present paper is as follows. For pedagogical purpose and to emphasize the main ideas that guided our computations, we first consider in section \ref{Drude} the case of the (single)  dissipative Drude model that corresponds to the particular case of \eqref{ODE_Lorentz} when $N_e = N_m = 1$ and $\omega_{e,1} = \omega_{e,m} = 0$ (this corresponds to the toy problem considered in the section 4.4 of \cite{cas-jol-kach-17}). In section \ref{Lorentz_model}, we shall extend the technical developments of the previous section to the generalized Lorentz models \eqref{ODE_Lorentz}, emphasizing the changes to be done in order to treat this more general model. Our stability results are compared with the ones of \cite{Nicaise2020}. In section \ref{sec-Extensions-result}, we  present how to apply our method  to bounded domains (section \ref{bounded_domains}) and  extend  our results to generalized  Drude-Lorentz models  (section \ref{Drude_Lorentz}) . Finally, the appendix section \ref{sec.appendix} gives the proofs of technical  results used through the paper.
\section{The case of the Drude model} \label{Drude}
In this section we are interested in studying the behaviour for long times of the solutions of the electric and magnetic fields, respectively, $\mathbf{E}$ and $\mathbf{H},$ of the Drude model whose dissipative formulation is obtained by introducing the time-derivatives of the polarization term $\mathbf{P}$ and the magnetization term $\mathbf{M}$ (where for the Drude model $\textbf{P}_\mathrm{tot}=\varepsilon_0\,\Omega_e^2 \textbf{P}$ and $\textbf{M}_\mathrm{tot}=\mu_0\,\Omega_m^2 \textbf{M}$). The unknowns of the problem are
\begin{align*}
\left\{
\begin{array}{ll}
     \textbf{E}(\bx,t):\R^3\times\R^+\longrightarrow\R^3, &\textbf{H}(\bx,t):\R^3\times\R^+\longrightarrow\R^3,\\ [6pt]
{\partial_t \mathbf{P}(\bx,t)}:\R^3\times\R^+\longrightarrow\R^3, & {\partial_t\mathbf{M}(\bx,t)}:\R^3\times\R^+\longrightarrow\R^3,
\end{array}
\right.
\end{align*}
and satisfy the governing equations 
\begin{subequations}\label{planteamiento} 
\begin{empheq}[left=\empheqlbrace]{align}
    &\varepsilon_0\,\partial_t\,\textbf{E}-\nabla\times\textbf{H}+\varepsilon_0\,\Omega_e^2\,\partial_t\,\textbf{P}=0, &(\bx,t)\in\R^3\times\R^+\label{E}\\
&\mu_0\,\partial_t\,\textbf{H}+\nabla\times\textbf{E}+\mu_0\,\Omega_m^2\,\partial_t\,\textbf{M}=0, & (\bx,t)\in\R^3\times\R^+\label{H}\\
&\partial_t^2\,\mathbf{P}+\alpha_e\, \partial_t \mathbf{P}=\mathbf{E}, & (\bx,t)\in\R^3\times\R^+\label{P}\\
&\partial_t^2\,\mathbf{M}+\alpha_m\,\partial_t \mathbf{M}=\mathbf{H}, & (\bx,t)\in\R^3\times\R^+\label{M}
\end{empheq}
\end{subequations}
completed by initial conditions
\begin{equation} \label{CI-D}
\left\{\begin{array}{l}
\mathbf{E}(\cdot, 0) =  \mathbf{E}_0, \quad \mathbf{H}(\cdot, 0) =  \mathbf{H}_0, \\ [8pt]
 \partial_t \mathbf{P}(\cdot, 0) = \partial_t \mathbf{M}(\cdot, 0) = 0. \end{array} \right.
\end{equation}
\noindent In \eqref{planteamiento}, the coefficients $(\varepsilon_0,\mu_0,\Omega_e, \Omega_m)$ are strictly positive. The coefficients $(\alpha_e, \alpha_m)$ are strictly positive damping coefficients. 
\\ [12pt] 
Setting $\mathcal{H}=L^2(\R^3)^3 \times L^2(\R^3)^3 \times L^2(\R^3)^3\times L^2(\R^3)^3$, the proposition \eqref{prop.wellposdness} in the appendix  (see also  remark \ref{rem.Drude})  insures that for $(\bE_0, \bH_0)\in L^2(\R)^3 \times L^2(\R)^3$, the system admits a unique mild solution $\bU=(\bE, \bH, \partial_t\bP, \partial_t \bM)$ in $C^{0}(\R^+, \mathcal{H})$ which is a strong solution in $C^{1}(\R^+, \mathcal{H})$ as soon as $(\bE_0, \bH_0)$ belongs to $H^1(\R^3)^3 \times H^1(\R^3)^3$. The goal of what follows is to analyze their influence on the long time behaviour (decay) of the solution. 
More precisely, our goal is in particular to obtain decay rates for the standard electromagnetic energy defined as following \begin{equation} \label{def_energyEM}
\mathcal{E}(t)\equiv \mathcal{E}(\mathbf{E}, \mathbf{H}, t):=\frac{1}{2}\left(\varepsilon_0\,\sn{\mathbf{E(\cdot,t)}}{L^2(\R^3)}^2+\mu_0\,\sn{\mathbf{H(\cdot,t)}}{L^2(\R^3)}^2\right).\end{equation}
This will be done through the following (augmented) energy
\begin{equation}\label{total_energy}
\mathcal{L}(t) \equiv \mathcal{L}(\mathbf{E}, \mathbf{H}, \mathbf{P}, \mathbf{M},t)  :=\mathcal{E}(\mathbf{E}, \mathbf{H}, t)+\frac{1}{2}\left(\varepsilon_0\,\Omega_e^2\,\sn{\mathbf{P}(\cdot,t)}{L^2(\R^3)}^2+\mu_0\,\Omega_m^2\,\sn{\mathbf{M}(\cdot,t)}{L^2(\R^3)}^2\right),
\end{equation}
that is a decreasing function of time, according to the energy identity 
\begin{equation}\label{decay_total_energy}
{\frac{d}{dt} \; \mathcal{L}(t) }+ \alpha_e \, \varepsilon_0\,\Omega_e^2\, \int_{\R^3} |\mathbf{P}(\bx,t)|^2  \, \rmd\bx + \mu_0\,   {\alpha_m}\,  \Omega_m^2\, \int_{\R^3} |\mathbf{M}(\bx,t)|^2  \, \rmd\bx = 0,
\end{equation}
that is easily demonstrated by standard arguments (see \cite{cas-jol-kach-17}, {section 4.4.1} and remark \ref{rem_energy}).
\\ [12pt]
For this purpose we shall use a Lyapunov function approach, based on the use the 3D spatial Fourier transform $\mathcal{F}$ defined  by:
$$
\mathbb{G}(\bk)=\mathcal{F}(\bG)(\bk)=\frac{1}{(2\pi)^{\frac{3}{2}} } \int_{\R^3}\bG(\bx)\,  \mathrm{e}^{-\rmi \bk \cdot \bx} \,\rmd \bx  \quad \forall \; \bG \in L^1(\R^3)^3 \cap  L^2(\R^3)^3,
$$
where we denote $\bk \in \R^3$ the dual variable of $\bx$ (or wave vector). We recall that $\mathcal{F}$ extends by density as a unitary transformation from $ L^2(\R_{\bx}^3)^3$ to $L^2(\R_{\bk}^3)^3$. We set $\mathbb{E}(\bk,t):=\mathcal{F}\{\mathbf{E}(\cdot,t)\}(\bk)$ and analogously $\mathbb{H}, \mathbb{P}$ and $\mathbb{M}$ the Fourier transforms of $\mathbf{H}, \mathbf{P}$ and $\mathbf{M},$ respectively. Accordingly we denote $\mathbb{E}_0$ the Fourier transform of $\bE_0$ and  $\mathbb{H}_0$ the Fourier transform of $\bH_0$. Then  $(\mathbb{E}, \mathbb{H}, \mathbb{P}, \mathbb{M})$ satisfy
\begin{subequations}\label{planteamiento Fourier} 
\begin{empheq}[left=\empheqlbrace]{align}
&\varepsilon_0\,\partial_t\,\mathbb{E}-\rmi\,(\bk\times\mathbb{H})+\varepsilon_0\,\Omega_e^2\,\partial_t\, \mathbb{P}=0,\label{E fourier} \\
&\mu_0\,\partial_t\,\mathbb{H}+\rmi\,(\bk\times\mathbb{E})+\mu_0\,\Omega_m^2\,\partial_t\,\mathbb{M}=0,\label{H fourier} \\
&\partial_t^2\,\mathbb{P}+\alpha_e\, \partial_t \mathbb{P}=\mathbb{E}, \label{P fourier}\\
&\partial_t^2\,\mathbb{M}+\alpha_m\,\partial_t \mathbb{M}=\mathbb{H}. \label{M fourier}
\end{empheq}
\end{subequations}
According to \eqref{CI-D}, the above system in complemented by 
\begin{equation} \label{CI-D-Fourier}
{\mathbb{E}( \bk,0) =  \mathbb{E}_0(\bk), \quad \mathbb{H}( \bk,0) =  \mathbb{H}_0(\bk), \quad
\partial_t \mathbb{P}(\bk, 0)= \partial_t \mathbb{M}(\bk, 0) = 0. }
\end{equation}
Let us note that by multiplying \eqref{E fourier} by $\overline{\mathbb{E}},$ using \eqref{P fourier} and taking the real part, we obtain that
\begin{equation}\label{1}
\frac{1}{2}\frac{d}{dt}\left(\varepsilon_0\,|\mathbb{E}(\bk,t)|^2+\varepsilon_0\,\Omega_e^2\,|\partial_t\,\mathbb{P}(\bk,t)|^2\right)+\alpha_e\,\varepsilon_0\,\Omega_e^2\,|\partial_t\,\mathbb{P}(\bk,t)|^2-\operatorname{Re} (\rmi\,(\bk\times \mathbb{H}(\bk,t))\cdot \overline{\mathbb{E}(\bk,t)})=0.
\end{equation}
Analogously we have from \eqref{H fourier} and \eqref{M fourier} that
\begin{equation}\label{2}
\frac{1}{2}\frac{d}{dt}\left(\mu_0\,|\mathbb{H}(\bk,t)|^2+\mu_0\,\Omega_m^2\,|\partial_t\,\mathbb{M}(\bk,t)|^2\right)+\alpha_m\,\mu_0\,\Omega_m^2\,|\partial_t\,\mathbb{M}(\bk,t)|^2+\operatorname{Re} (\rmi\,(\bk\times \mathbb{E}(\bk,t))\cdot \overline{\mathbb{H}(\bk,t)})=0.
\end{equation}
Finally by adding up \eqref{1} and \eqref{2} and using 
$\ds
\operatorname{Re} (\rmi\,(\bk\times \mathbb{E}(\bk,t))\cdot \overline{\mathbb{H}(\bk,t)})-\operatorname{Re} (\rmi\,(\bk\times\mathbb{H}(\bk,t))\cdot \overline{\mathbb{E}(\bk,t)})=0,
$
we show that
\begin{equation}\label{bonito}
\frac{d}{dt}\,\mathcal{L}_\bk +\mathcal{D}_{\boldsymbol{\alpha}, \bk}=0,
\end{equation}
where we have set 
\begin{equation} \label{defenergy_densities}
\left\{\begin{array}{ll}
\mathcal{L}_\bk(t) = \mathcal{E}_\bk(t) + \mathcal{E}_{\boldsymbol{\Omega}, \bk}(t), & \quad \mbox{the Lyapunov density,}\\ [12pt]
\mathcal{E}_\bk(t) := \frac{1}{2}  \big(\varepsilon_0\,|\mathbb{E}(\bk,t)|^2+\mu_0\,|\mathbb{H}(\bk,t)|^2\big), & \quad  \mbox{the energy density,} \\ [12pt]
	\mathcal{E}_{\boldsymbol{\Omega}, \bk}(t) := \frac{1}{2}  \big(\varepsilon_0\,\Omega_e^2\,|\partial_t \mathbb{P}(\bk,t)|^2+ \mu_0\,\Omega_m^2\,|\partial_t \mathbb{M}(\bk,t)|^2 \big), & \quad \mbox{the additional energy density,}
\end{array} \right.
\end{equation}
and the decay density (the index $\boldsymbol{\alpha}$ is here to emphasize the fact that this is the term in \eqref{bonito} which involes the damping coefficients $\alpha_e$ and $\alpha_m$)
\begin{equation} \label{defdecay_density}
\mathcal{D}_{\boldsymbol{\alpha}, \bk}(t) := \alpha_e\,\varepsilon_0\,\Omega_e^2\,|\partial_t  \mathbb{P}(\bk,t)|^2+\alpha_m\,\mu_0\,\Omega_m^2\,|\partial_t \mathbb{M}(\bk,t)|^2.
\end{equation}
We employ the term "density" to refer to the fact that one works at fixed $\bk$: the electromagnetic energy $\mathcal{E}$, for instance, is obtained, via Plancherel's theorem, by integration aver $\bk$ of the energy density $\mathcal{E}_\bk$
\begin{equation} \label{energy_Fourier}
\mathcal{E}(t)  = \int \mathcal{E}_\bk(t)  \; d \bk.
\end{equation}
\begin{rem} \label{rem_energy}

The reader will note that \eqref{decay_total_energy} can be recovered by integrating the above identity over $\bk \in \R^3$ and applying Plancherel's theorem. 
\end{rem}
\noindent We can not exploit only \eqref{bonito} for studying the long time behaviour of $\mathcal{L}_\bk(t)$ because we can not estimate $\mathcal{L}_\bk(t)$ with the help of $\mathcal{D}_{\boldsymbol{\alpha}, \bk}(t) $,  which does not involve $\mathbb{E}$ and $\mathbb{H}$. On the other hand, we see on \eqref{planteamiento Fourier} that, in order to control $\mathbb{E}$ and $\mathbb{H}$, we need the second order derivatives $\partial_t^2 \mathbb{P}, \partial_t^2 \mathbb{M}$. This suggests to look at the system obtained after time differentiation of \eqref{planteamiento Fourier}.  
Then in the same way that we obtained \eqref{bonito}, we have (with obvious notation)
\begin{equation}\label{bonito1}
\frac{d}{dt}\,\mathcal{L}_\bk^1+\mathcal{D}_{\boldsymbol{\alpha}, \bk}^1=0,
\end{equation}
where we have introduced the first order densities (where "first order" refers to that fact that first order derivatives of the electromagnetic field are involved)
\begin{equation} \label{defenergy_densities}
\left\{\begin{array}{ll}
\mathcal{L}^1 _\bk(t)= \mathcal{E}^1 _\bk(t) + \mathcal{E}_{\boldsymbol{\Omega}, \bk}^1(t), & \quad \mbox{first order Lyapunov density,}\\ [12pt]
\mathcal{E}_\bk^1(t) := \frac{1}{2}  \big(\varepsilon_0\,|\partial_t \mathbb{E}(\bk,t)|^2+\mu_0\,|\partial_t\mathbb{H}(\bk,t)|^2\big) & \quad  \mbox{first order energy density,} \\ [12pt]
\mathcal{E}_{\boldsymbol{\Omega}, \bk}^1(t) := \frac{1}{2}  \big(\varepsilon_0\,\Omega_e^2\,|\partial_t^2 \mathbb{P}(\bk,t)|^2+ \mu_0\,\Omega_m^2\,|\partial_t^2 \mathbb{M}(\bk,t)|^2 \big), & \quad \mbox{first order add. energy density,}
\end{array} \right.
\end{equation}
and the first order decay density
\begin{equation} \label{defdecay_density}
\mathcal{D}^1 _{\boldsymbol{\alpha}, \bk}(t) := \alpha_e\,\varepsilon_0\,\Omega_e^2\,|\partial_t ^2 \mathbb{P}(\bk,t)|^2+\alpha_m\,\mu_0\,\Omega_m^2\,|\partial_t^2\mathbb{M}(\bk,t)|^2,
\end{equation}
which precisely involves $\partial_t^2 \mathbb{P}, \partial_t^2 \mathbb{M}$. The next step is to combine \eqref{bonito} and \eqref{bonito1}. In what follows, we shall use some standard notation, to begin with
\begin{equation} \label{notation} \langle \bk\rangle = \big(1 + |\bk|^2 \big)^{\frac{1}{2}}
\end{equation} 
and to compare two positive functions $f(\bk,t)$ and $g(\bk,t)$, the notation
\begin{equation} \label{notation2}
f(\bk,t) \lsim g(\bk,t) \quad \Longleftrightarrow \quad \exists\;  C > 0 \mbox{ such that} \quad \forall \; \bk\in \R^3, \; \forall \; t\geq 0, \quad f(\bk,t) \leq C \;  g(\bk,t) .
\end{equation} 
Performing the linear combination \eqref{bonito} $+ \langle \bk\rangle^{-2} $ \eqref{bonito1} 
we obtain (cf. remark \ref{rem_norm})
\begin{equation}\label{mas bonito}
\frac{d}{dt}\,\mathcal{L}_\bk^{(1)}+\mathcal{D}_{\boldsymbol{\alpha}, \bk}^{(1)}=0,
\end{equation}
where we have introduced the first order cumulated densities for the Lyapunov function, the energy and the additional energy, (note the difference of notation between $\mathcal{L}_\bk^{(1)}$ and $\mathcal{L}_\bk^1$, etc ...)
\begin{equation} \label{defcumenergy_densities}
\mathcal{L}_\bk^{(1)} := \mathcal{L} _\bk + \langle \bk\rangle^{-2} \, \mathcal{L}^1 _\bk, \quad 
\mathcal{E}_\bk^{(1)} := \mathcal{E} _\bk + \langle \bk\rangle^{-2} \, \mathcal{E}^1 _\bk,  \quad 
\mathcal{E}_{\boldsymbol{\Omega}, \bk}^{(1)} := \mathcal{E}_{\boldsymbol{\Omega}, \bk}^1  + \langle \bk\rangle^{-2}\, \mathcal{E}_{\boldsymbol{\Omega}, \bk}^1,
\end{equation}
and the first order cumulated decay density
\begin{equation} \label{defcumdecay_density}
\mathcal{D}^{(1)}_{\boldsymbol{\alpha}, \bk}(t) := \mathcal{D}_{\boldsymbol{\alpha}, \bk}(t) + \langle \bk\rangle^{-2}\, \mathcal{D}^1_{\boldsymbol{\alpha}, \bk}(t).
\end{equation}
\begin{rem} \label{rem_norm} The weight $\langle \bk\rangle^{-2} $ is here to compensate the  time derivation in the expressions of $\mathcal{L}^1 _\bk$ and $\mathcal{D}^1_{\boldsymbol{\alpha}, \bk}$ so that the quantities summed in the definitions \eqref{defcumenergy_densities} and \eqref{defcumdecay_density}  
 have the same ``homogeneity''. 
\end{rem}
\noindent The key point is that \eqref{mas bonito} can be exploited thanks to the following lemma.
\begin{lemma}\label{lem-gronwall} 
Assume that $\alpha_e > 0$ and $\alpha_m > 0$. Then the following estimate holds
\begin{equation}\label{estimacion chida}
\mathcal{L}_\bk^{(1)}(t) \lsim \langle \bk\rangle^2\;\mathcal{D}^{(1)}_{\boldsymbol{\alpha}, \bk}(t).
\end{equation}
\end{lemma}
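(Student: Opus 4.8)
The plan is to expand both sides of \eqref{estimacion chida} into their elementary constituents and to dominate every term occurring in $\mathcal{L}_\bk^{(1)}$ by the right-hand side $\langle \bk\rangle^2\,\mathcal{D}^{(1)}_{\boldsymbol{\alpha}, \bk}$. Writing out the definitions \eqref{defcumenergy_densities}, the left-hand side is (up to the fixed positive constants $\varepsilon_0,\mu_0,\Omega_e,\Omega_m$) a sum of the four groups
\[
\{ |\partial_t\mathbb{P}|^2, |\partial_t\mathbb{M}|^2 \}, \quad \langle \bk\rangle^{-2}\{ |\partial_t^2\mathbb{P}|^2, |\partial_t^2\mathbb{M}|^2 \}, \quad \{ |\mathbb{E}|^2, |\mathbb{H}|^2 \}, \quad \langle \bk\rangle^{-2}\{ |\partial_t\mathbb{E}|^2, |\partial_t\mathbb{H}|^2 \},
\]
while the right-hand side, since $\langle \bk\rangle^2\mathcal{D}^{(1)}_{\boldsymbol{\alpha}, \bk}=\langle \bk\rangle^2\mathcal{D}_{\boldsymbol{\alpha}, \bk}+\mathcal{D}^1_{\boldsymbol{\alpha}, \bk}$, controls, thanks to the hypothesis $\alpha_e,\alpha_m>0$, the four quantities $\langle \bk\rangle^2|\partial_t\mathbb{P}|^2$, $\langle \bk\rangle^2|\partial_t\mathbb{M}|^2$, $|\partial_t^2\mathbb{P}|^2$ and $|\partial_t^2\mathbb{M}|^2$. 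Throughout I would use $\langle \bk\rangle^2\geq 1$ (equivalently $\langle \bk\rangle^{-2}\leq 1$) and $|\bk|^2\leq \langle \bk\rangle^2$.

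The first two groups are immediate: $|\partial_t\mathbb{P}|^2\leq \langle \bk\rangle^2|\partial_t\mathbb{P}|^2$ and $\langle \bk\rangle^{-2}|\partial_t^2\mathbb{P}|^2\leq|\partial_t^2\mathbb{P}|^2$, each dominated by the matching term of the right-hand side (and likewise for the magnetic counterparts). To handle the electromagnetic energy density $\{|\mathbb{E}|^2,|\mathbb{H}|^2\}$ I would use the polarization and magnetization equations \eqref{P fourier}, \eqref{M fourier} to write $\mathbb{E}=\partial_t^2\mathbb{P}+\alpha_e\,\partial_t\mathbb{P}$ and $\mathbb{H}=\partial_t^2\mathbb{M}+\alpha_m\,\partial_t\mathbb{M}$; the elementary inequality $|a+b|^2\leq 2|a|^2+2|b|^2$ then gives $|\mathbb{E}|^2\lsim|\partial_t^2\mathbb{P}|^2+|\partial_t\mathbb{P}|^2$, both of which are already controlled.

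The only delicate group is the first order energy density $\langle \bk\rangle^{-2}\{|\partial_t\mathbb{E}|^2,|\partial_t\mathbb{H}|^2\}$, and this is where the weight $\langle \bk\rangle^{-2}$ earns its place. The naive idea of differentiating \eqref{P fourier} would express $\partial_t\mathbb{E}$ through the third order derivative $\partial_t^3\mathbb{P}$, which does not appear on the right-hand side; instead I would read $\partial_t\mathbb{E}$ off the Maxwell curl equation \eqref{E fourier}, namely $\varepsilon_0\,\partial_t\mathbb{E}=\rmi\,(\bk\times\mathbb{H})-\varepsilon_0\,\Omega_e^2\,\partial_t\mathbb{P}$. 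Bounding $|\bk\times\mathbb{H}|^2\leq|\bk|^2|\mathbb{H}|^2\leq\langle \bk\rangle^2|\mathbb{H}|^2$ and then multiplying by $\langle \bk\rangle^{-2}$ yields $\langle \bk\rangle^{-2}|\partial_t\mathbb{E}|^2\lsim|\mathbb{H}|^2+|\partial_t\mathbb{P}|^2$, where the first term was controlled in the previous step and the second is a right-hand side term. The symmetric computation with \eqref{H fourier} disposes of $\partial_t\mathbb{H}$. Summing the four groups and absorbing all constants, which depend only on $\varepsilon_0,\mu_0,\Omega_e,\Omega_m,\alpha_e,\alpha_m$ and not on $\bk$ or $t$, into a single $C$ gives \eqref{estimacion chida}.

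I expect the main obstacle to be precisely the bookkeeping of this last group: one must resist differentiating the polarization ODE and instead exploit the curl structure, verifying that the factor $|\bk|^2$ produced by the cross product is exactly compensated by $\langle \bk\rangle^{-2}$, so that no factor unbounded in $\bk$ survives and the constant $C$ can indeed be chosen uniform in $\bk$ and $t$.
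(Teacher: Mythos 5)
Your proposal is correct and follows essentially the same route as the paper's proof: the additional energy terms are absorbed directly into the decay density using $\alpha_e,\alpha_m>0$, the fields $\mathbb{E},\mathbb{H}$ are recovered from the constitutive ODEs \eqref{P fourier}--\eqref{M fourier}, and $\partial_t\mathbb{E},\partial_t\mathbb{H}$ are read off the Maxwell curl equations \eqref{E fourier}--\eqref{H fourier}, with the factor $|\bk|^2$ exactly compensated by the weight $\langle\bk\rangle^{-2}$. The only difference is organizational (you track individual terms where the paper tracks the densities $\mathcal{E}_\bk$, $\mathcal{E}_\bk^1$, $\mathcal{E}_{\boldsymbol{\Omega},\bk}^{(1)}$), and your identification of the delicate point --- using the curl structure rather than differentiating the polarization ODE --- is precisely the paper's observation.
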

\begin{proof}
In what follows, for conciseness, we omit to mention the (implicit) dependence of various quantities with respect to $t$ and/or $\bk$. We recall that $\mathcal{L} _\bk^{(1)} = \mathcal{E} _\bk^{(1)} + \mathcal{E}_{\boldsymbol{\Omega}, \bk}^{(1)}$.\\ [12pt]
Since both $\alpha_e$ and $\alpha_m$ are strictly positive, we immediately observe that we can control the cumulated energy additional $\mathcal{E}_{\boldsymbol{\Omega}, \bk}^{(1)}$ with  $\mathcal{D}_{\boldsymbol{\alpha}, \bk}^{(1)}$ :
\begin{equation} \label{estiEkad1cum}
\mathcal{E}_{\boldsymbol{\Omega}, \bk}\lsim \mathcal{D}_{\boldsymbol{\alpha}, \bk}\;  \mbox{ and } \; \mathcal{E}_{\boldsymbol{\Omega}, \bk}^1 \lsim \mathcal{D}_{\boldsymbol{\alpha}, \bk}^1 \; \mbox{ which implies, by (\ref{defcumenergy_densities}, \ref{defcumdecay_density})} , \quad \mathcal{E}_{\boldsymbol{\Omega}, \bk}^{(1)} \lsim \mathcal{D}_{\boldsymbol{\alpha}, \bk}^{(1)}.
\end{equation}
For estimating the energy density $\mathcal{E} _\bk$, it is natural to use the constitutive equations \eqref{P fourier} and \eqref{M fourier} to deduce, again because $\alpha_e$ and $\alpha_m$ are strictly positive,
\begin{equation} \label{estiEk0} 
|\mathbb{E}|^2\lsim |\partial_ t\mathbb{P}|^2+|\partial_t^2\,\mathbb{P}|^2 \lsim \mathcal{D}_{\boldsymbol{\alpha}, \bk}  + \mathcal{D}_{\boldsymbol{\alpha}, \bk}^1 ,\quad 
|\mathbb{H}|^2\lsim |\partial_ t\mathbb{M}|^2+|\partial_t^2\,\mathbb{M}|^2 \lsim \mathcal{D}_{\boldsymbol{\alpha}, \bk}  + \mathcal{D}_{\boldsymbol{\alpha}, \bk}^1.
\end{equation} 
Thus, by definition of $\mathcal{E} _\bk$, 
\begin{equation} \label{estiEk} 
 \mathcal{E} _\bk \lsim \mathcal{D}_{\boldsymbol{\alpha}, \bk}  + \mathcal{D}_{\boldsymbol{\alpha}, \bk}^1.
\end{equation} 
For estimating the first order  energy density $\mathcal{E} _\bk^1$, we need to bound $\partial_t \mathbb{E}$ and $\partial_t \mathbb{H}$ for which it is natural to use Maxwell equations \eqref{E fourier} and \eqref{H fourier}. This is where we need to introduce $\bk$-dependent coefficients in our estimates. Indeed, from \eqref{E fourier} and \eqref{H fourier},
\begin{equation} \label{estiEk10}
\begin{array}{ll}
|\partial_t\,\mathbb{E}|^2 \lsim |\bk\times\mathbb{H}|^2+|\partial_ t\mathbb{P}|^2
 \lsim |\bk|^2\,|\mathbb{H}|^2+|\partial_t \mathbb{P}|^2, \\ [12pt]
 |\partial_t\,\mathbb{H}|^2 \lsim |\bk\times\mathbb{E}|^2+|\partial_ t\mathbb{M}|^2
 \lsim |\bk|^2\,|\mathbb{E}|^2+|\partial_t \mathbb{M}|^2.
\end{array}
\end{equation} 
Adding the two inequalities yields, by definition of  $\mathcal{E} _\bk^1$ \eqref{defenergy_densities} and $\mathcal{D}_{\boldsymbol{\alpha}, \bk}$ \eqref{defdecay_density} (we use again $\alpha_e ,\alpha_m > 0$)
\begin{equation} \label{estiEk1}
\mathcal{E} _\bk^1 \lsim |\bk|^2\,\mathcal{E} _\bk +\mathcal{D}_{\boldsymbol{\alpha}, \bk} \lsim  \langle \bk\rangle^2\, \mathcal{D}_{\boldsymbol{\alpha}, \bk}  + |\bk|^2 \; \mathcal{D}_{\boldsymbol{\alpha}, \bk}^1 
\end{equation} 
where, for the second inequality, we have used \eqref{estiEk}.
Finally, performing \eqref{estiEk} $ + \; \langle \bk\rangle^{-2} $ \eqref{estiEk1}, gives
\begin{equation} \label{estiEk1cum}
\mathcal{E} _\bk^{(1)} \lsim  \mathcal{D}_{\boldsymbol{\alpha}, \bk}  + \mathcal{D}_{\boldsymbol{\alpha}, \bk}^1 \leq \langle \bk\rangle^2\, \mathcal{D}_{\boldsymbol{\alpha}, \bk}^{(1)} \, \mbox{ (since $1 \leq \langle \bk\rangle^2$)}  .
\end{equation} 
Finally, \eqref{estimacion chida} results from $\mathcal{L} _\bk^{(1)} = \mathcal{E} _\bk^{(1)} + \mathcal{E}_{\boldsymbol{\Omega}, \bk}^{(1)}$, \eqref{estiEkad1cum} and \eqref{estiEk1cum}.
\end{proof}
\noindent Lemma \eqref{estimacion chida} means the existence of a constant $\sigma=\sigma(\varepsilon_0, \mu_0, \alpha_e, \alpha_m, \Omega_e, \Omega_m) > 0$ such that 
$$\mathcal{D}_{\boldsymbol{\alpha}, \bk}^{(1)}  \geq \sigma \; \langle\bk\rangle^{-2} \; \mathcal{L} _\bk^{(1)}.$$
Combined with \eqref{bonito1}, this gives the differential inequality
$$
\frac{d}{dt}\, \mathcal{L} _\bk^{(1)} + \sigma \; \langle\bk\rangle^{-2}\, \mathcal{L} _\bk^{(1)} \leq 0
$$
which gives {by integration in time:}
\begin{equation}\label{eq.gronwall}
 \mathcal{L} _\bk^{(1)}(t)\leq \mathcal{L} _\bk^{(1)}(0) \; e^{- \frac{\sigma \, t}{\langle\bk\rangle^{2}}}.
\end{equation}
Next, we show how to control $\mathcal{L} _\bk^{(1)}(0)$ in terms on the initial data as
\begin{equation}\label{estimLinitial}
\mathcal{L} _\bk^{(1)}(0)\lsim |\mathbb{E}(\bk,0)|^2+|\mathbb{H}(\bk,0)|^2.
\end{equation}
Indeed, by definition and initial conditions \eqref{CI-D-Fourier}, 
\begin{equation}\label{L0}
\mathcal{L} _\bk(0)\lsim |\mathbb{E}(\bk,0)|^2+|\mathbb{H}(\bk,0)|^2.
\end{equation}
On the other hand, from \eqref{defenergy_densities}, one has
$$
 \mathcal{L} _\bk^1(0)\leq |\partial_t \mathbb{E}(\bk,0)|^2+|\partial_t \mathbb{H}(\bk,0)|^2 + |\partial_t^2 \mathbb{P}(\bk,0)|^2+|\partial_t^2 \mathbb{M}(\bk,0)|^2 .
$$
 By (\ref{E fourier}, \ref{H fourier}) at $t=0$, 
$
|\partial_t \mathbb{E}(\bk,0)|^2+|\partial_t \mathbb{H}(\bk,0)|^2 \lsim |\bk|^2 \; \big(|\mathbb{E}(\bk,0)|^2+|\mathbb{H}(\bk,0)|^2\big)
$ since $\partial_t \mathbb{P}(\bk,0)$ and $\partial_t \mathbb{M}(\bk,0)$ vanish,
while (\ref{P fourier},\ref{M fourier}) give
$
 |\partial_t^2 \mathbb{P}(\bk,0)|^2+|\partial_t^2 \mathbb{M}(\bk,0)|^2 \lsim |\partial_t \mathbb{E}(\bk,0)|^2+|\partial_t \mathbb{H}(\bk,0)|^2.
$ Thus
\begin{equation}\label{L10}
\mathcal{L} _\bk^1(0) \lsim |\bk|^2 \; \big(|\mathbb{E}(\bk,0)|^2+|\mathbb{H}(\bk,0)|^2\big).
\end{equation}
Finally \eqref{estimLinitial} results from \eqref{L0} $+ \langle\bk\rangle^{-2}$ \eqref{L10}. At last, substituting \eqref{estimLinitial} into \eqref{eq.gronwall}, as $\mathcal{L} _\bk \leq \mathcal{L} _\bk^{(1)}$
\begin{equation} \label{estiLk}
\mathcal{L} _\bk(t)\leq \big(|\mathbb{E}(\bk,0)|^2+|\mathbb{H}(\bk,0)|^2\big) \;  e^{- \frac{\sigma \, t}{\langle\bk\rangle^{2}}} \; .
\end{equation}
This inequality says that the Lyapunov density decays exponentially in time for each $|\bk|$. It also says the the decay rate depends on $|\bk|$ and tends to $0$ when $|\bk|$ tends  to $+ \infty$. This is the reason why, when coming back to the augmented energy $\mathcal{L}(t)$ (cf. \eqref{total_energy}) we shall obtain only polynomial decay.
 This is summarized in the main theorem of this section:
\begin{theorem} \label{thm_Drude}
For any 
$(\mathbf{E}_0, \mathbf{H}_0) \in L^2(\R^3)^3 \times L^2(\R^3)^3$, the total energy tends to $0$ when $t$ tends to $+\infty :$
\begin{equation} \label{convergence}
\lim_{t \rightarrow + \infty} \mathcal{L}(t) = 0.
\end{equation}
If $
(\mathbf{E}_0, \mathbf{H}_0) \in H^{m}(\R^3)^3 \times  H^{m}(\R^3)^3 \quad \mbox{for some  integer } m > 0$, 
one has a polynomial decay rate
\begin{equation} \label{polynomial_decay}
\mathcal{L}(t)\lsim \Big( \sn{\mathbf{E}_0}{H^{m}(\R^3)}^2 + \sn{\mathbf{H}_0}{H^{m}(\R^3)}^2 \Big)\;t^{-m}.
\end{equation}
\end{theorem}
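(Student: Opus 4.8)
The plan is to integrate the pointwise (in $\bk$) estimate \eqref{estiLk} over all wave vectors and exploit Plancherel's theorem. Since $\mathcal{F}$ is unitary, the augmented energy is recovered as $\mathcal{L}(t) = \int_{\R^3} \mathcal{L}_\bk(t)\, \rmd\bk$ (this is \eqref{energy_Fourier} applied termwise, together with remark \ref{rem_energy}). Feeding \eqref{estiLk} into this identity reduces the whole theorem to the analysis of a single integral,
\[
\mathcal{L}(t) \;\leq\; \int_{\R^3}\big(|\mathbb{E}(\bk,0)|^2 + |\mathbb{H}(\bk,0)|^2\big)\, e^{-\frac{\sigma\, t}{\langle\bk\rangle^2}}\,\rmd\bk,
\]
where, by Plancherel, the map $\bk\mapsto |\mathbb{E}(\bk,0)|^2 + |\mathbb{H}(\bk,0)|^2$ is in $L^1(\R^3)$ with integral $\sn{\bE_0}{L^2(\R^3)}^2 + \sn{\bH_0}{L^2(\R^3)}^2$. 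All the remaining work is to understand how the frequency-dependent exponential factor behaves under this integration.

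For the first assertion \eqref{convergence}, I would argue by dominated convergence. For every fixed $\bk\in\R^3$ the factor $e^{-\sigma t/\langle\bk\rangle^2}$ tends to $0$ as $t\to+\infty$, while the whole integrand is dominated, uniformly in $t\geq 0$, by the fixed $L^1$ function $|\mathbb{E}(\bk,0)|^2 + |\mathbb{H}(\bk,0)|^2$. Lebesgue's dominated convergence theorem then yields $\mathcal{L}(t)\to 0$, valid for any $L^2$ initial data with no regularity assumption.

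For the polynomial rate \eqref{polynomial_decay}, the key is the elementary scalar inequality
\[
\langle\bk\rangle^{-2m}\, e^{-\frac{\sigma\, t}{\langle\bk\rangle^2}} \;\lsim\; t^{-m}, \qquad \bk\in\R^3,\ t>0,
\]
which follows by the substitution $\tau = t/\langle\bk\rangle^2$ and the fact that $\sup_{\tau>0}\tau^m e^{-\sigma\tau}$ is a finite constant depending only on $m$ and $\sigma$. Inserting the factorisation $|\mathbb{E}(\bk,0)|^2 + |\mathbb{H}(\bk,0)|^2 = \langle\bk\rangle^{-2m}\,\langle\bk\rangle^{2m}\big(|\mathbb{E}(\bk,0)|^2 + |\mathbb{H}(\bk,0)|^2\big)$ into the integral bound and applying this inequality gives
\[
\mathcal{L}(t) \;\lsim\; t^{-m}\int_{\R^3}\langle\bk\rangle^{2m}\big(|\mathbb{E}(\bk,0)|^2 + |\mathbb{H}(\bk,0)|^2\big)\,\rmd\bk,
\]
and the remaining integral is exactly $\sn{\bE_0}{H^{m}(\R^3)}^2 + \sn{\bH_0}{H^{m}(\R^3)}^2$ by the Fourier characterisation of the Sobolev norm, which is \eqref{polynomial_decay}.

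The computation is essentially routine once \eqref{estiLk} is in hand; the one conceptual point worth isolating is the scalar optimisation $\sup_{\tau>0}\tau^m e^{-\sigma\tau}<\infty$. It quantifies precisely the mechanism announced after \eqref{estiLk}: because the per-mode decay rate $\sigma/\langle\bk\rangle^2$ degenerates as $|\bk|\to\infty$, no uniform (hence no exponential) decay of the total energy can be expected, and one must spend $2m$ derivatives of Sobolev regularity on the data in order to buy back a $t^{-m}$ algebraic rate. This low/high-frequency balance is the substantive content of the statement; everything else reduces to Plancherel together with dominated convergence.
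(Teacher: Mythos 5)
Your proposal is correct and follows essentially the same route as the paper's proof: Plancherel reduces everything to the integral bound from \eqref{estiLk}, dominated convergence gives \eqref{convergence}, and the insertion of $\langle\bk\rangle^{2m}/t^m$ together with the boundedness of $r\mapsto r^m e^{-\sigma r}$ and the Fourier characterisation of Sobolev norms gives \eqref{polynomial_decay}. No substantive difference to report.
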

\begin{proof}
From the respective definitions of 	$\mathcal{L}$ and $\mathcal{L}_\bk$, by Plancherel's identity and \eqref{estiLk}, we have
	\begin{equation} \label{energy_fourier-drude}
	\mathcal{L}(t)=\int_{\R^3} \mathcal{L}_\bk(t) \; \rmd \bk \lsim \int_{\R^3} \big( \, |\mathbb{E}_0(\bk)|^2 + |\mathbb{H}_0(\bk)|^2 \, \big) \;  e^{- \frac{\sigma \, t}{\langle\bk\rangle^{2}}} \; \rmd \bk.
	\end{equation}
From Lebesgue's dominated convergence theorem, we first conclude that \eqref{convergence} holds for any initial data $(\mathbf{E}_0, \mathbf{H}_0) \in L^2(\R^3)^3 \times L^2(\R^3)^3$.\\[12pt]
Next, in order to exploit the Sobolev regularity of the initial for obtaining \eqref{polynomial_decay}, we rewrite \eqref{energy_fourier-drude} as follows (we simply make appear artificially the factor $\langle\bk\rangle^{2m}/t^m$)
$$
\begin{array}{lll}
\mathcal{L}(t) & \lsim & \ds t^{-m} \int_{\R^3} \langle\bk\rangle^{2m} \Big( |\mathbb{E}_0(\bk)|^2 + |\mathbb{H}_0(\bk)|^2\Big) \;  \big({t}/{\langle\bk\rangle^{2}}\big)^m \; e^{- \frac{\sigma t}{\langle\bk\rangle^{2}}} \; \rmd \bk \\ [18pt] 
& \lsim & \ds   t^{-m} \int_{\R^3} \langle\bk\rangle^{2m} \Big( |\mathbb{E}_0(\bk)|^2 + |\mathbb{H}_0(\bk)|^2\Big) \;  F\big(t/\langle\bk\rangle^{2}\big) \; \rmd \bk. \end{array}
$$
where we have set $F_m(r) := r^{m} \, e^{- \sigma r}, r \geq 0$ which is clearly bounded on $\R^+$. \\ [12pt]
Setting  $C_m:= \, \displaystyle\sup_{r\geq 0} F_m(r)=(m/(\sigma \,\mathrm{e}))^{m}$, by Fourier characterization of Sobolev norms, we get \\\centerline{$\ds
\mathcal{L}(t) \lsim C_m\,  t^{-m} \; \Big( \sn{\mathbf{E}_0}{H^{m+1}(\R^3)}^2 + \sn{\mathbf{H}_0}{H^{m+1}(\R^3)}^2 \Big).
$}
\end{proof}
\section{The case of the generalized Lorentz model} \label{Lorentz_model}
In this section, our goal is to extend the results in the latest section to the case of the (generalized) Lorentz model. The evolution (Cauchy) problem reads as follows
\begin{align*}
\mbox{Find} \quad \quad \left\{
\begin{array}{ll}
     \textbf{E}(\bx,t):\R^3\times\R^+\longrightarrow\R^3 &\textbf{H}(\bx,t):\R^3\times\R^+\longrightarrow\R^3\\ [6pt]
  \mathbf{P}_j(\bx,t):\R^3\times\R^+\longrightarrow\R^3, 1 \leq j \leq N_e, \quad  &\mathbf{M}_\ell(\bx,t):\R^3\times\R^+\longrightarrow\R^3,  \quad 1 \leq \ell \leq N_m,
\end{array}
\right.
\end{align*}
such that (for all $1 \leq j \leq N_e, 1 \leq \ell \leq N_m$)
\begin{subequations}\label{planteamiento Lorentz} 
\begin{empheq}[left=\empheqlbrace]{align}
    &\varepsilon_0\,\partial_t\,\textbf{E}-\nabla\times\textbf{H}+\varepsilon_0\,\sum_{j=1}^{N_e}\,\Omega_{e,j}^2\,\partial_t\,\mathbf{P}_j=0, &(\bx,t)\in\R^3\times\R^+,\label{E Lorentz}\\
&\mu_0\,\partial_t\,\textbf{H}+\nabla\times\textbf{E}+\mu_0\,\sum_{\ell=1}^{N_m}\,\Omega_{m,\ell}^2\,\partial_t\,\mathbf{M}_\ell=0, & (\bx,t)\in\R^3\times\R^+,\label{H Lorentz}\\[6pt]
&\partial_t^2\,\mathbf{P}_j+\alpha_{e,j}\,\partial_t\,\mathbf{P}_j+\omega_{e,j}^2\,\mathbf{P}_j=\mathbf{E}, & (\bx,t)\in\R^3\times\R^+,\label{P Lorentz}\\[12pt]
&\partial_t^2\,\mathbf{M}_\ell+\alpha_{m,\ell}\,\partial_t\,\mathbf{M}_j+\omega_{m,\ell}^2\,\mathbf{M}_j=\mathbf{H}, & (\bx,t)\in\R^3\times\R^+,\label{M Lorentz}
\end{empheq}
\end{subequations}
completed by the following divergence free initial conditions
\begin{equation} \label{CI}
\left\{\begin{array}{l}
\mathbf{E}(\cdot, 0) =  \mathbf{E}_0, \quad \mathbf{H}(\cdot, 0) =  \mathbf{H}_0 \quad  \mbox{ with } \quad \nabla \cdot \mathbf{E}_0=\nabla \cdot \mathbf{H}_0=0, \\ [8pt]
\mathbf{P}(\cdot, 0) =\mathbf{M}(\cdot, 0)  =\partial_t \mathbf{P}(\cdot, 0) = \partial_t \mathbf{M}(\cdot, 0) = 0. \end{array} \right.
\end{equation}
where  for simplifying some formulas, we  treat the $\mathbf{P}_j$ and the $\mathbf{M}_\ell$ collectively setting 
\begin{equation} \label{notPM} {\mathbf{P}}=(\mathbf{P}_j)_{j=1}^{N_e} \quad \mbox{and} \quad {\mathbf{M}}=(\mathbf{M}_\ell)_{\ell=1}^{N_m}.
\end{equation}
In the above equations, the coefficients $(\Omega_{e,j},\Omega_{m,\ell}, \Omega_{e,j},\Omega_{m,\ell}) $ are supposed to satisfy (see remark \ref{rem_Drude})
\begin{equation} \label{hypomegabis}
\begin{array}{l}
0 < \Omega_{e,1} \leq \cdots \leq \Omega_{e,N_e}, \quad 0 < \Omega_{m,1} \leq \cdots \leq \Omega_{m,N_m},  \\ [12pt]
\omega_{e,j} > 0, \quad 1 \leq j \leq N_e, \quad  \omega_{m,\ell} > 0,\quad 1 \leq \ell \leq N_m.
\end{array}
\end{equation}
and one can assume without any loss of generality that the couples $(\alpha_{e,j}, \omega_{e,j})$ (resp. $(\alpha_{m,\ell}, \omega_{m,\ell})$) are all distinct the ones from the others.
\begin{rem} \label{rem_Drude} Note that one recovers the Drude model of section with
$
N_e = N_m = 1 \mbox{  if } \quad \omega_{e,1} = \omega_{m,1} = 0.
$
\end{rem}
\noindent Setting $
\mathcal{H} =L^2(\mathbb{R}^3)^3\times L^2(\mathbb{R}^3)^3 \times L^2(\mathbb{R}^3)^{3N_e}
\times L^2(\mathbb{R}^3)^{3N_e} \times L^2(\mathbb{R}^3)^{3N_m}  \times L^2(\mathbb{R}^3)^{3N_m} 
$, the proposition \ref{prop.wellposdness} insures that for  $(\bE_0, \bH_0)\in L^2(\R)^3$, the system  \eqref{planteamiento Lorentz} admits a unique mild solution $\bU=(\bE, \bH, \partial_t\bP, \partial_t \bM)$ in $C^{0}(\R^+, \mathcal{H})$, which is a strong solution in $C^{1}(\R^+, \mathcal{H})$ as soon as $(\bE_0, \bH_0)\in H^1(\R^3)^3 \times H^1(\R^3)^3$.\\

\noindent {The equations \eqref{planteamiento Lorentz} are
completed by the initial conditions \eqref{CI}.  If the initial electric and magnetic fields are divergence free, all vector fields appearing in \eqref{planteamiento Lorentz} are divergence free at any time, {see Proposition \ref{prop.div} of appendix \ref{sec-apend-D}}. 
\begin{equation}\label{free divergenceLorentz}
\nabla\cdot\mathbf{E}(\cdot, t)= \nabla\cdot\mathbf{H} (\cdot, t)= \nabla\cdot  \mathbf{P}_j (\cdot, t)= \nabla\cdot  \mathbf{M}_{\ell}(\cdot, t)=0, \quad  \forall  \; t>0, \quad \forall \; j,m.
\end{equation}}
The equivalent for Lorentz of the identity \eqref{decay_total_energy} (for Drude) is (see \cite{cas-jol-kach-17})
\begin{equation}\label{decay_total_energyL}
{\frac{d}{dt} \; \mathcal{L}(t) }+  \varepsilon_0 \sum_{j=1}^{N_e} \alpha_{j,e} \,\Omega_{j,e}^2\, \int_{\R^3} |\mathbf{P}_j(\bx,t)|^2  \, \rmd\bx + \mu_0\,  \sum_{\ell=1}^{N_m} \alpha_{m,\ell}\,  \Omega_{m,\ell}^2\, \int_{\R^3} |\mathbf{M}_\ell(\bx,t)|^2  \, \rmd\bx = 0.
\end{equation}
We assume the strong dissipation assumption \eqref{SD}, namely all the damping coefficients are positive:
$
\alpha_{e,j}, \alpha_{m,\ell} >0 \;\mbox{ for all }1\leq j\leq N_e, 1\leq \ell\leq N_m.
$ \\ [12pt] 
Next, we observe that the space Fourier transforms $(\mathbb{E}, \mathbb{H}, \mathbb{P}_j, \mathbb{M}_\ell)$ of $({\bf E}, {\bf H}, {\bf P}_j, {\bf M}_\ell)$  satisfy
\begin{subequations}\label{planteamiento Lorentz Fourier} 
\begin{empheq}[left=\empheqlbrace]{align}
    &\varepsilon_0\,\partial_t\,\mathbb{E}-i\, \bk\times\mathbb{H}+\varepsilon_0\,\sum_{j=1}^{N_e}\,\Omega_{e,j}^2\,\partial_t\,\mathbb{P}_j=0, \label{E Lorentz Fourier}\\
&\mu_0\,\partial_t\,\mathbb{H}+i\,\bk\times\mathbb{E}+\mu_0\,\sum_{\ell=1}^{N_m}\,\Omega_{m,\ell}^2\,\partial_t\,\mathbb{M}_\ell=0, \label{H Lorentz Fourier}\\[6pt]
&\partial_t^2\,\mathbb{P}_j+\alpha_{e,j}\,\partial_t\,\mathbb{P}_j+\omega_{e,j}^2\,\mathbb{P}_j=\mathbb{E}, \label{P Lorentz Fourier}\\[12pt]
&\partial_t^2\,\mathbb{M}_\ell+\alpha_{m,\ell}\,\partial_t\,\mathbb{M}_\ell+\omega_{m,\ell}^2\,\mathbb{M}_\ell=\mathbb{H} \label{M Lorentz Fourier}. 
\end{empheq}
\end{subequations}
According to \eqref{notPM}, we shall set
\begin{equation} \label{notPMF}
{\mathbb{P}}:=(\mathbb{P}_j)_{j=1}^{N_e}, \quad 
{\mathbb{M}}:=(\mathbb{M}_\ell)_{\ell=1}^{N_m},
\end{equation}
and will also use the condensed notation
\begin{equation} \label{normPMF} 
|\mathbb{P}|^2 := \sum _{j=1}^{N_e} |\mathbb{P}_j|^2, \quad \mbox{and} \quad |\mathbb{M}|^2 := \sum _{\ell=1}^{N_e} |\mathbb{P}_\ell|^2.
\end{equation}
To study the long time behaviour of the solution of \eqref{planteamiento Lorentz Fourier}, it is natural to try to use the same approach than for the Drude model in section \ref{Drude}. As a matter of fact 
proceeding as for obtaining \eqref{bonito}, we can derive from \eqref{planteamiento Lorentz Fourier}, {and the identity}
$
\operatorname{Re} (\rmi\,(\bk\times \mathbb{E}(\bk,t))\cdot \overline{\mathbb{H}(\bk,t)})-\operatorname{Re} (\rmi\,(\bk\times\mathbb{H}(\bk,t))\cdot \overline{\mathbb{E}(\bk,t)})=0,
$
{the  relation}
\begin{equation}\label{bonitoL}
\frac{d}{dt}\,\mathcal{L}_\bk +\mathcal{D}_{\boldsymbol{\alpha}, \bk}=0,
\end{equation}
where we have introduced the energy densities 
\begin{equation} \label{defenergy_densitiesL}
\left\{\begin{array}{ll}
\mathcal{L}_\bk(t) = \mathcal{E}_\bk(t) + \mathcal{E}_{\boldsymbol{\Omega}, \bk}(t), \\ [14pt]
\mathcal{E}_\bk(t) := \frac{1}{2}  \big(\varepsilon_0\,|\mathbb{E}(\bk,t)|^2+\mu_0\,|\mathbb{H}(\bk,t)|^2\big) \\ [6pt]
\ds \mathcal{E}_{\boldsymbol{\Omega}, \bk}(t) := \frac{1}{2}  \bigg(\varepsilon_0\,\sum_{j=1}^{N_e} \Omega_{e,j}^2\, |\partial_t \mathbb{P}_j(\bk,t)|^2 +  \mu_0\,\sum_{\ell=1}^{N_m}\Omega_{m,\ell}^2\, | \partial_t \mathbb{M}_\ell(\bk,t)|^2\bigg) 
\\ [6pt] \hspace*{1.3cm } + \; \displaystyle\frac{1}{2}  \bigg( \varepsilon_0\,\sum_{j=1}^{N_e} \omega_{e,j}^2\, \Omega_{e,j}^2\, |\mathbb{P}_j(\bk,t)|^2 +  \mu_0\,\sum_{\ell=1}^{N_m} \omega_{m,\ell}^2\,\Omega_{m,\ell}^2\, |\mathbb{M}_\ell(\bk,t)|^2 \bigg),
\end{array} \right.
\end{equation}
and the decay density 
\begin{equation} \label{defdecay_densityL}
\mathcal{D}_{\boldsymbol{\alpha}, \bk}(t) := \varepsilon_0\,\sum_{j=1}^{N_e} \alpha_{e,j} \, \Omega_{e,j}^2\,|\partial_t \mathbb{P}_j(\bk,t)|^2+ \mu_0\,\sum_{\ell=1}^{N_m} \alpha_{e,\ell} \, \Omega_{m,\ell}^2\,|\partial_t \mathbb{M}_{\ell}(\bk,t)|^2.
\end{equation}
The main novelty, with respect to the Drude case, is the apparition of the second line in the definition of $\mathcal{E}_{\boldsymbol{\Omega}, \bk}$, that involves the fields $\mathbb{P}$ and $\mathbb{M}$ and not
only their time derivative. \\ [12pt]
Reasoning with the time derivatives of the fields as for the Drude case, we also have
\begin{equation}\label{bonito1L}
\frac{d}{dt}\,\mathcal{L}_\bk^1+\mathcal{D}_{\boldsymbol{\alpha}, \bk}^1=0,
\end{equation}
having defined the first order energy densities
\begin{equation} \label{defenergy_densities1L}
\left\{\begin{array}{ll}
\mathcal{L}_\bk^1(t) = \mathcal{E}^1_\bk(t) + \mathcal{E}^1_{\boldsymbol{\Omega}, \bk}(t), \\ [14pt]
\mathcal{E}^1_\bk(t) := \frac{1}{2}  \big(\varepsilon_0\,|\partial_t \mathbb{E}(\bk,t)|^2+\mu_0\,| \partial_t \mathbb{H}(\bk,t)|^2\big) \\ [6pt]
\ds \mathcal{E}^1_{\boldsymbol{\Omega}, \bk}(t) := \frac{1}{2}  \bigg(\varepsilon_0\,\sum_{j=1}^{N_e} \Omega_{e,j}^2\, |\partial_t^2 \mathbb{P}_j(\bk,t)|^2 +  \mu_0\,\sum_{\ell=1}^{N_m}\Omega_{m,\ell}^2\, | \partial_t^2 \mathbb{M}_\ell(\bk,t)|^2\bigg) 
\\ [6pt] \hspace*{1.3cm } + \; \displaystyle\frac{1}{2}  \bigg( \varepsilon_0\,\sum_{j=1}^{N_e} \omega_{e,j}^2\, \Omega_{e,j}^2\, |\partial_t \mathbb{P}_j(\bk,t)|^2 +  \mu_0\,\sum_{\ell=1}^{N_m} \omega_{m,\ell}^2\,\Omega_{m,\ell}^2\, | \partial_t \mathbb{M}_\ell(\bk,t)|^2 \bigg),
\end{array} \right.
\end{equation}
and the first order decay density 
\begin{equation} \label{defdecay_density1L}
\mathcal{D}^1_{\boldsymbol{\alpha}, \bk}(t) := \varepsilon_0\,\sum_{j=1}^{N_e} \alpha_{e,j} \, \Omega_{e,j}^2\,|\partial_t^2 \mathbb{P}_j(\bk,t)|^2+ \mu_0\,\sum_{\ell=1}^{N_m} \alpha_{e,\ell} \, \Omega_{m,\ell}^2\,|\partial_t^2\mathbb{M}_{\ell}(\bk,t)|^2.
\end{equation}
However, this time, 
\eqref{bonitoL} and \eqref{bonito1L} will not be sufficient to proceed as in the Drude case because
we need to contrôl the term in the second line of the definition \eqref{defenergy_densitiesL} of $\mathcal{E}_{\boldsymbol{\Omega}, \bk}$, in other words $\mathbb{P}$ and $\mathbb{M}$. Because these fields only appear the constitutive laws \eqref{P Lorentz Fourier} and \eqref{M Lorentz Fourier}, we need to adopt a different strategy with respect to section \ref{Drude}:
\begin{itemize} 
\item [(i)] This time, the constitutive laws \eqref{P Lorentz Fourier} and \eqref{M Lorentz Fourier} are used to control $\mathbb{P}$  and $\mathbb{M}$ (and no longer $\mathbb{E}$ and $\mathbb{H}$) in function of   $\mathbb{E}$, $\partial_t \mathbb{P}, \, \partial_t^2 \mathbb{P}$,  $\mathbb{H}$, $\partial_t \mathbb{M}$ and $\partial_t^2 \mathbb{M}$.
\item [(ii)] we then need to control $\mathbb{E}$ and $\mathbb{M}$ in another manner: this will be done by using the Maxwell's equations (\ref{E Lorentz Fourier},\ref{H Lorentz Fourier}), via $\bk \times \mathbb{E}$ (resp. $\bk \times \mathbb{M}$) (this control will thus  degenerate when $|\bk|$ tends to $0$) in function of $\partial_t \mathbb{H}$ and $\partial_t \mathbb{M}$ (resp. $\partial_t \mathbb{E}$ and $\partial_t \mathbb{P}$). {This will use $|\bk\times  \mathbb{E}|=|\bk|  \, | \mathbb{E}|$  (resp. $|\bk\times  \mathbb{H}|=|\bk| \,  | \mathbb{H}|$), which is the counterpart  in  Fourier space of the free divergence property \eqref{free divergenceLorentz}.}
\item [(iii)]  Finally, to control $ \partial_t \mathbb{E}$ and $\partial_t \mathbb{H}$, the idea is to use again \eqref{P Lorentz Fourier} and \eqref{M Lorentz Fourier}, but this time after time differentiation. Doing so, we control  $ \partial_t \mathbb{E}$ and $\partial_t \mathbb{H}$ with $\partial^2_t \mathbb{P}$ and $\partial^2_t \mathbb{M}$, which do appear in the definitions of $\mathcal{D}_{\boldsymbol{\alpha}, \bk}$ and $\mathcal{D}_{\boldsymbol{\alpha}, \bk}^1$, but also the third order derivatives $\partial^3_t \mathbb{P}$ and $\partial^3_t \mathbb{M}$.
\end{itemize}
That is why, in order to make appear a damping function containing the third order derivatives,  we differentiate the equations of the problem once more in time, which leads to the identity
\begin{equation}\label{bonito2L}
\frac{d}{dt}\,\mathcal{L}_\bk^2+\mathcal{D}_{\boldsymbol{\alpha}, \bk}^2=0,
\end{equation}
having defined the second order energy densities
\begin{equation} \label{defenergy_densities2L}
\left\{\begin{array}{ll}
\mathcal{L}_\bk^2(t) = \mathcal{E}^2_\bk(t) + \mathcal{E}^2_{\boldsymbol{\Omega}, \bk}(t), \\ [14pt]
\mathcal{E}^2_\bk(t) := \frac{1}{2}  \big(\varepsilon_0\,|\partial_t^2 \mathbb{E}(\bk,t)|^2+\mu_0\,| \partial_t^2 \mathbb{H}(\bk,t)|^2\big) \\ [6pt]
\ds \mathcal{E}^2_{\boldsymbol{\Omega}, \bk}(t) := \frac{1}{2}  \bigg(\varepsilon_0\,\sum_{j=1}^{N_e} \Omega_{e,j}^2\, |\partial_t^3 \mathbb{P}_j(\bk,t)|^2 +  \mu_0\,\sum_{\ell=1}^{N_m}\Omega_{m,\ell}^2\, | \partial_t^3 \mathbb{M}_\ell(\bk,t)|^2\bigg) 
\\ [6pt] \hspace*{1.3cm } + \; \displaystyle\frac{1}{2}  \bigg( \varepsilon_0\,\sum_{j=1}^{N_e} \omega_{e,j}^2\, \Omega_{e,j}^2\, |\partial_t^2 \mathbb{P}_j(\bk,t)|^2 +  \mu_0\,\sum_{\ell=1}^{N_m} \omega_{m,\ell}^2\,\Omega_{m,\ell}^2\, | \partial_t^2 \mathbb{M}_\ell(\bk,t)|^2 \bigg),
\end{array} \right.
\end{equation}
and the second order decay density 
\begin{equation} \label{defdecay_density2L}
\mathcal{D}^2_{\boldsymbol{\alpha}, \bk}(t) := \varepsilon_0\,\sum_{j=1}^{N_e} \alpha_{e,j} \, \Omega_{e,j}^2\,|\partial_t^3 \mathbb{P}_j(\bk,t)|^2+ \mu_0\,\sum_{\ell=1}^{N_m} \alpha_{e,\ell} \, \Omega_{m,\ell}^2\,|\partial_t^3 \mathbb{M}_{\ell}(\bk,t)|^2.
\end{equation}
Finally, proceeding as for obtaining \eqref{mas bonito}, we deduce from equations \eqref{bonitoL} to \eqref{defdecay_density2L} that
\begin{equation}\label{mas bonitoL}
\frac{d}{dt}\,\mathcal{L}_\bk^{(2)}+\mathcal{D}_{\boldsymbol{\alpha}, \bk}^{(2)}=0,
\end{equation}
where we have introduced the second order cumulated energy densities 
\begin{equation} \label{defcumenergy_densitiesL}
\mathcal{L}_\bk^{(2)} := \sum_{j=0}^2\langle \bk\rangle^{-2j} \, \mathcal{L}^j _\bk \equiv \mathcal{E}_\bk^{(2)} + \mathcal{E}_{\boldsymbol{\Omega}, \bk}^{(2)}, \quad 
\mathcal{E}_\bk^{(2)} := \sum_{j=0}^2\langle \bk\rangle^{-2j} \, \mathcal{E}^j _\bk,  \quad 
\mathcal{E}_{\boldsymbol{\Omega}, \bk}^{(2)} := \sum_{j=0}^2\langle \bk\rangle^{-2j} \, \mathcal{E}_{\boldsymbol{\Omega}, \bk}^j,
\end{equation}
 and the second order cumulated decay density
\begin{equation} \label{defcumdecay_densityL}
\mathcal{D}^{(2)}_{\boldsymbol{\alpha}, \bk} :=\sum_{j=0}^2\langle \bk\rangle^{-2j} \, \mathcal{D}^j _{\boldsymbol{\alpha}, \bk}.
\end{equation}
In (\ref{defcumenergy_densitiesL}, \ref{defcumdecay_densityL}), by convention, $\mathcal{L} ^0_\bk = \mathcal{L}_\bk, $ etc ... The key point is that, according to the process (i)(ii)(iii) described above, we can bound $\mathcal{L}_\bk^{(2)}$ in terms of $\mathcal{D}^{(2)}_{\boldsymbol{\alpha}, \bk}$ : 
\begin{lemma}\label{lem-gronwallL} 
	{Assume that the strong dissipation assumption holds. Then, one has  the following estimate}
	\begin{equation}\label{estimacion chidaL}
	\mathcal{L}_\bk^{(2)}(t) \lsim  \big(\langle \bk\rangle^2+ |\bk|^{-2}\big)  \; \mathcal{D}^{(2)}_{\boldsymbol{\alpha}, \bk}(t).
	\end{equation}
\end{lemma}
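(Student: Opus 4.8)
The plan is to mirror the mechanism of Lemma~\ref{lem-gronwall}, bounding every constituent of $\mathcal{L}_\bk^{(2)} = \mathcal{E}_\bk^{(2)} + \mathcal{E}_{\boldsymbol{\Omega}, \bk}^{(2)}$ by the cumulated decay density, but now organised around the three--step scheme (i)--(iii). I would write each $\mathcal{E}_{\boldsymbol{\Omega}, \bk}^{j}$ as the sum of a \emph{derivative part} (the terms in $|\partial_t^{j+1}\mathbb{P}_j|^2$ and $|\partial_t^{j+1}\mathbb{M}_\ell|^2$) and a \emph{field part} (the terms in $\omega_{e,j}^2\Omega_{e,j}^2|\partial_t^{j}\mathbb{P}_j|^2$, etc.), and dispatch first the easy contributions. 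Since the strong dissipation assumption gives $\alpha_{e,j},\alpha_{m,\ell}>0$, each derivative part of order $j$ is $\lsim \mathcal{D}_{\boldsymbol{\alpha}, \bk}^{j}$, while the field parts of orders $1$ and $2$ (which only involve $\partial_t\mathbb{P}$, $\partial_t^2\mathbb{P}$, etc.) are respectively $\lsim \mathcal{D}_{\boldsymbol{\alpha}, \bk}$ and $\lsim\mathcal{D}_{\boldsymbol{\alpha}, \bk}^{1}$. After weighting by $\langle\bk\rangle^{-2j}$, all of these are immediately absorbed into $\mathcal{D}^{(2)}_{\boldsymbol{\alpha}, \bk}$ (here I use the convention $\mathcal{D}^0_{\boldsymbol{\alpha},\bk}=\mathcal{D}_{\boldsymbol{\alpha},\bk}$).

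The only genuinely new term is the order-$0$ field part $|\mathbb{P}|^2+|\mathbb{M}|^2$, which carries weight $1$ and is absent in the Drude case. Following (i), I would solve the constitutive laws \eqref{P Lorentz Fourier}--\eqref{M Lorentz Fourier} for $\mathbb{P}_j,\mathbb{M}_\ell$, which is possible precisely because $\omega_{e,j},\omega_{m,\ell}>0$ by \eqref{hypomegabis}, obtaining $|\mathbb{P}_j|^2 \lsim |\mathbb{E}|^2 + |\partial_t^2\mathbb{P}_j|^2 + |\partial_t\mathbb{P}_j|^2 \lsim |\mathbb{E}|^2 + \mathcal{D}^1_{\boldsymbol{\alpha},\bk} + \mathcal{D}_{\boldsymbol{\alpha},\bk}$ and likewise for $\mathbb{M}_\ell$. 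It then remains to control $|\mathbb{E}|^2$ and $|\mathbb{H}|^2$, which also enter $\mathcal{E}_\bk^{(2)}$ through $\mathcal{E}_\bk$. Following (ii), I would invoke Maxwell's equations \eqref{E Lorentz Fourier}--\eqref{H Lorentz Fourier} together with the divergence-free identity $|\bk\times\mathbb{E}|=|\bk|\,|\mathbb{E}|$ coming from \eqref{free divergenceLorentz}, to get $|\bk|^2|\mathbb{E}|^2 \lsim |\partial_t\mathbb{H}|^2+|\partial_t\mathbb{M}|^2$; the term $|\partial_t\mathbb{H}|^2$ is handled by (iii), i.e. by differentiating \eqref{M Lorentz Fourier} once in time to write $\partial_t\mathbb{H}=\partial_t^3\mathbb{M}_\ell+\alpha_{m,\ell}\partial_t^2\mathbb{M}_\ell+\omega_{m,\ell}^2\partial_t\mathbb{M}_\ell$, whence $|\partial_t\mathbb{H}|^2 \lsim \mathcal{D}^2_{\boldsymbol{\alpha},\bk}+\mathcal{D}^1_{\boldsymbol{\alpha},\bk}+\mathcal{D}_{\boldsymbol{\alpha},\bk}$. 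This produces the low-frequency degeneracy $|\mathbb{E}|^2 \lsim |\bk|^{-2}\big(\mathcal{D}_{\boldsymbol{\alpha},\bk}+\mathcal{D}^1_{\boldsymbol{\alpha},\bk}+\mathcal{D}^2_{\boldsymbol{\alpha},\bk}\big)$, which is exactly the source of the factor $|\bk|^{-2}$ in \eqref{estimacion chidaL}.

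For the remaining pieces of $\mathcal{E}_\bk^{(2)}$ I would reuse the differentiated equations: $|\partial_t\mathbb{E}|^2 \lsim \mathcal{D}_{\boldsymbol{\alpha},\bk}+\mathcal{D}^1_{\boldsymbol{\alpha},\bk}+\mathcal{D}^2_{\boldsymbol{\alpha},\bk}$ directly from the once-differentiated constitutive law (step (iii)), and $|\partial_t^2\mathbb{E}|^2 \lsim |\bk|^2\big(\mathcal{D}_{\boldsymbol{\alpha},\bk}+\mathcal{D}^1_{\boldsymbol{\alpha},\bk}+\mathcal{D}^2_{\boldsymbol{\alpha},\bk}\big)+\mathcal{D}^1_{\boldsymbol{\alpha},\bk}$ from the once-differentiated Maxwell equation \eqref{E Lorentz Fourier} combined with the previous bound on $|\partial_t\mathbb{H}|^2$; the roles of $\mathbb{E}$ and $\mathbb{H}$ are symmetric throughout, so the analogous bounds hold for $\mathbb{H}$.

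Finally, the assembly is pure bookkeeping. Collecting the above with the weights $\langle\bk\rangle^{-2j}$, $j=0,1,2$, every contribution takes the form $w\,\mathcal{D}^i_{\boldsymbol{\alpha},\bk}$ with $w$ equal to $\langle\bk\rangle^{-2j}$ or $|\bk|^{-2}\langle\bk\rangle^{-2j}$ and $0\le i\le 2$; using $\langle\bk\rangle^{-2i}\mathcal{D}^i_{\boldsymbol{\alpha},\bk}\le \mathcal{D}^{(2)}_{\boldsymbol{\alpha},\bk}$ one reduces to checking that $w\,\langle\bk\rangle^{2i} \lsim \langle\bk\rangle^2+|\bk|^{-2}$, which follows from the elementary inequalities $\langle\bk\rangle^{2j}\lsim\langle\bk\rangle^{2}$ for $j\le 1$ and $|\bk|^{-2}\langle\bk\rangle^{2j}\lsim\langle\bk\rangle^{2}+|\bk|^{-2}$ for $0\le j\le 2$ (the extremal cases being $\langle\bk\rangle^2$ and $|\bk|^{-2}\langle\bk\rangle^{4}$); every term is thus dominated by $(\langle\bk\rangle^2+|\bk|^{-2})\,\mathcal{D}^{(2)}_{\boldsymbol{\alpha},\bk}$. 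The main obstacle is conceptual rather than computational: unlike the Drude case, the constitutive law no longer yields $\mathbb{E}$ directly (it only returns $\mathbb{P}$ in terms of $\mathbb{E}$), so one is forced to recover $\mathbb{E}$ and $\mathbb{H}$ from the decay densities only after dividing Maxwell's equations by $|\bk|$, and the delicate point is to verify that this unavoidable $|\bk|^{-2}$ loss at low frequency, together with the high-frequency growth $\langle\bk\rangle^2$ already present in the Drude estimate, is exactly what the weighted sum defining $\mathcal{D}^{(2)}_{\boldsymbol{\alpha},\bk}$ can absorb.
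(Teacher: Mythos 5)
Your proposal is correct and follows essentially the same route as the paper's own proof: the same splitting of $\mathcal{L}^{(2)}_\bk$ into energy and additional-energy parts, the same use of the (time-differentiated) constitutive laws to bound $\partial_t\mathbb{E},\partial_t\mathbb{H}$ and $\mathbb{P},\mathbb{M}$, the same use of Maxwell's equations together with the divergence-free identity $|\bk\times\mathbb{E}|=|\bk|\,|\mathbb{E}|$ to recover $\mathbb{E},\mathbb{H}$ at the cost of the $|\bk|^{-2}$ factor, and the same final weight bookkeeping whose extremal case is $|\bk|^{-2}\langle\bk\rangle^{4}\lsim\langle\bk\rangle^{2}+|\bk|^{-2}$. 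Nothing essential differs from the argument given in Section \ref{Lorentz_model}.
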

 \begin{proof} Before entering the technical details, let us first give the main ideas and steps of the proof. The goal is to control $\mathcal{H}^{(2)}$ with the help of $\mathcal{D}^{(2)}_{\boldsymbol{\alpha}}$. Towards this goal, we observe that
$$
\begin{array}{lll}
\mathcal{L}_\bk^{(2)}  & \mbox{is a norm in}  &\mathbb{U}:=\big(\mathbb{E}, \partial_t \mathbb{E}, \partial_t^2 \mathbb{E}, \mathbb{H}, \partial_t \mathbb{H}, \partial_t^2 \mathbb{H},\mathbb{P}, \partial_t \mathbb{P}, \partial_t^2 \mathbb{P}, \partial_t^3 \mathbb{P}, \mathbb{M}, \partial_t \mathbb{M}, \partial_t^2 \mathbb{M}, \partial_t^3 \mathbb{M}\big), 
\\ [12pt]
\mathcal{D}^{(2)}_{\boldsymbol{\alpha}, \bk} & \mbox{is a norm in} &  \mathbb{V}:=(\partial_t \mathbb{P}, \partial_t^2 \mathbb{P}, \partial_t^3 \mathbb{P},  \partial_t \mathbb{M}, \partial_t^2 \mathbb{M}, \partial_t^3 \mathbb{M}), 
\end{array}
$$
The idea that we shall develop is that, roughly speaking,  $\mathcal{D}^{(2)}_{\boldsymbol{\alpha}}$ is also a norm with respect to in $\mathbb{U}$ along the linear manifold (in the $\mathbb{U}$-space) generated by the equations $(\ref{E Lorentz Fourier}, \ref{H Lorentz  Fourier},\ref{P Lorentz Fourier},\ref{M Lorentz Fourier})$ and their time-derivatives. More precisely, we shall be able to control 
the terms which are missing in $\mathbb{V}$ namely  $\mathbb{E},\partial_t \mathbb{E}, \partial_t^2 \mathbb{E}, \mathbb{H}, \partial_t \mathbb{H}, \partial_t^2 \mathbb{H},$ $\mathbb{P}, \mathbb{M}$ with the terms appearing in $\mathcal{D}^{(2)}_{\boldsymbol{\alpha}}$ namely $\partial_t \mathbb{P}, \partial_t^2 \mathbb{P}, \partial_t^3 \mathbb{P},  \partial_t \mathbb{M}, \partial_t^2 \mathbb{M}, \partial_t^3 \mathbb{M}$. 
This will be done in the following order
$$ 
\begin{array}{ll}
 (a) &  \left\{ \begin{array}{l} \mbox{Control $\partial_t \mathbb{E}$ with $\partial_t \mathbb{P}, \partial_t^2 \mathbb{P}, \partial_t^3 \mathbb{P}$ using $\partial_t$\eqref{P Lorentz Fourier},} \\ [6pt]
  \mbox{Control $\partial_t \mathbb{H}$ with $\partial_t \mathbb{M}, \partial_t^2 \mathbb{M},
    \partial_t^3 \mathbb{M}$ using $\partial_t$\eqref{M Lorentz Fourier},}
    \end{array} \right. 
    \\ [20pt]
(b)  &  \left\{ \begin{array}{l} \mbox{Control $\mathbb{E}$ with $\partial_t \mathbb{H}, \partial_t \mathbb{M}$ using \eqref{H Lorentz Fourier},} \\ [6pt]
\mbox{Control $\mathbb{H}$ with $\partial_t \mathbb{E}, \partial_t \mathbb{P}$ using \eqref{E Lorentz Fourier},} \end{array} \right.
\\[20pt]
(c) &  \left\{ \begin{array}{l}  \mbox{Control $\partial_t^2\mathbb{E}$ with $\partial_t \mathbb{H}, \partial_t \mathbb{P}$ using $\partial_t$\eqref{E Lorentz Fourier},}\\ [6pt]
    \mbox{Control $\partial_t^2 \mathbb{H}$ with $\partial_t \mathbb{E}, \partial_t \mathbb{M}$ using $\partial_t$\eqref{H  Lorentz Fourier},}
    \end{array} \right. 
    \\[20pt]
    (d) &  \left\{ \begin{array}{l} \mbox{ Control $\mathbb{P}$ with $\partial_t \mathbb{P}, \, \partial_t^2 \mathbb{P}$ and $\mathbb{E}$ using \eqref{P  Lorentz Fourier},} \\[6pt]
\mbox{ Control $\mathbb{M}$ with $\partial_t \mathbb{M}, \, \partial_t^2 \mathbb{M}$ and $\mathbb{H}$ using \eqref{M Lorentz Fourier}.}
\end{array} \right.
\end{array}
$$
 Of course the constants in the estimates issued from using Maxwell's equations (\ref{E Lorentz Fourier}, \ref{H Lorentz Fourier}), that is to say the ones of steps (b) and (c), will be $\bk$-dependent. It is worth while mentioning that these equations are used in a different manner in (b) and (c). In step (b), they are used to estimate lower order time  derivatives of the field $(\mathbb{E}, \mathbb{H})$ with higher order time  derivatives: as a consequence, the constants in the estimates will blow up when $\bk$ tend to $0$. At the contrary, in step (c), they are used to estimate higher order time  derivatives of the field $(\mathbb{E}, \mathbb{H})$ with lower order time  derivatives: as a consequence, the constants in the estimates will blow up when $|\bk|$ tend to $+ \infty$. \\ [12pt]
Let us now enter in the details of the proof. \\[12pt]
{\it Step 1 : Control of the energy density $\mathcal{E}_\bk^{(2)}$. } We control below each ot the terms $\langle \bk\rangle^{-2j} \, \mathcal{E}^j _\bk$ appearing in the sum \eqref{defcumenergy_densitiesL} defining $\mathcal{E}_\bk^{(2)}$.\\[12pt]
(a) Case $j=1$ : control of $\langle \bk\rangle^{-2} \, \mathcal{E}^1 _\bk$. \\ [12pt]
 Using the strong dissipation  assumption  \eqref{SD}, we obtain  from differentiating \eqref{P Lorentz Fourier} with respect to time and a summation over $j$ according to  (\ref{notPMF}, \ref{normPMF})
\begin{equation} \label{estdtE}
 |\partial_t \mathbb{E}|^2 \lsim |\partial_t\mathbb{P}|^2 + |\partial_t^2\mathbb{P}|^2 + |\partial_t^3\mathbb{P}|^2.
\end{equation}
On the other hand, by definition of each $\mathcal{D}^{j}_{\boldsymbol{\alpha}, \bk}$,  and making appear at the right hand sides the terms in the sum that defines $\mathcal{D}^{(2)}_{\boldsymbol{\alpha}, \bk}$, we have
$$
\begin{array}{llll} 
|\partial_t^3 \mathbb{P}|^2 \leq \mathcal{D}^2_{\boldsymbol{\alpha}, \bk} & \; \Longrightarrow \; & \langle \bk\rangle^{-2} \, |\partial_t^3 \mathbb{P}|^2  \lsim \langle \bk\rangle^{-2} \; \mathcal{D}^2_{\boldsymbol{\alpha}, \bk}  = \langle \bk\rangle^{2 } \, (\langle \bk\rangle^{-4}  \; \mathcal{D}^2_{\boldsymbol{\alpha}, \bk}) &\\ [8pt]
|\partial_t^2 \mathbb{P}|^2 \leq \mathcal{D}^1_{\boldsymbol{\alpha}, \bk} &\; \Longrightarrow \; & \langle \bk\rangle^{-2} \, |\partial_t^2 \mathbb{P}|^2  \lsim \langle \bk\rangle^{-2} \; \mathcal{D}^1_{\boldsymbol{\alpha}, \bk}  \lsim \langle \bk\rangle^{2 } \, \big(\langle \bk\rangle^{-2} \; \mathcal{D}^1_{\boldsymbol{\alpha}, \bk} \big)  &\; (\langle \bk\rangle \geq 1)\\ [8pt]
|\partial_t \mathbb{P}|^2 \leq \mathcal{D}_{\boldsymbol{\alpha}, \bk} & \; \Longrightarrow \;  & \langle \bk\rangle^{-2} \, |\partial_t^2 \mathbb{P}|^2  \lsim \langle \bk\rangle^{-2} \; \mathcal{D}_{\boldsymbol{\alpha}, \bk}  \leq \langle \bk\rangle^{2} \; \big( \mathcal{D}_{\boldsymbol{\alpha}, \bk} \big)  &\;(\langle \bk\rangle \geq 1)
 \end{array}
		$$
After summation of the above three inequalities and by definition of $\mathcal{D}^{(2)}_{\boldsymbol{\alpha}, \bk}$, we deduce from \eqref{estdtE} that
\begin{equation} \label{dE}
\langle \bk\rangle^{-2}  \, |\partial_t \mathbb{E}|^2 \lsim \langle \bk\rangle^{2} \,  \mathcal{D}^{(2)}_{\boldsymbol{\alpha}, \bk}.
 \end{equation}
Analogously  from differentiating \eqref{M Lorentz Fourier} with respect to time, we get $\langle \bk\rangle^{-2}  \, |\partial_t \mathbb{H}|^2 \lsim \langle \bk\rangle^{2} \,  \mathcal{D}^{(2)}_{\boldsymbol{\alpha}, \bk}$ which, combined with \eqref{dE} and the definition \eqref{defenergy_densities1L} of $\mathcal{E}_\bk^1$ , leads to
\begin{equation} \label{densenergy1}
\langle \bk\rangle^{-2}  \, \mathcal{E}_\bk^{1} \lsim \langle \bk\rangle^{2} \,  \mathcal{D}^{(2)}_{\boldsymbol{\alpha}, \bk}.
\end{equation}
(b) Case $j=0$ : control of $\mathcal{E} _\bk$. \\ [12pt]Using the divergence free property \eqref{free divergenceLorentz} (i.e. $\bk\cdot \mathbb{H}=0$),  \eqref{E Lorentz Fourier} and \eqref{dE}, we get
$$|\bk \times \mathbb{H}  |^2=|\bk|^2\,|\mathbb{H}|^2\lsim |\partial_t\,\mathbb{E}|^2+|\partial_t \mathbb{P}|^2 \lsim    \langle \bk\rangle^{4} \; \mathcal{D}_{\boldsymbol{\alpha}, \bk}^{(2)}+ \mathcal{D}_{\boldsymbol{\alpha}, \bk} \lsim  \langle \bk\rangle^4 \; \mathcal{D}_{\boldsymbol{\alpha}, \bk}^{(2)}$$
since   $\mathcal{D}_{\boldsymbol{\alpha}, \bk} \leq \mathcal{D}_{\boldsymbol{\alpha}, \bk}^{(2)}$ and $1 \leq  \langle \bk\rangle^{4}.$ Thus $ |\mathbb{H}|^2\lsim |\bk|^{-2} \, \langle \bk\rangle^4 \;\mathcal{D}_{\boldsymbol{\alpha}, \bk}^{(2)}$, or equivalently
 \begin{equation}\label{H}
 |\mathbb{H}|^2\lsim  \big(\langle \bk\rangle^2+ |\bk|^{-2}\big)  \; \mathcal{D}_{\boldsymbol{\alpha}, \bk}^{(2)}.
 \end{equation}
Similarly, from \eqref{H Lorentz Fourier}  we obtain  $|\mathbb{E}|^2\lsim   \big(\langle \bk\rangle^2+ |\bk|^{-2}\big)  \; \mathcal{D}_{\boldsymbol{\alpha}, \bk}^{(2)}$ which, combined with \eqref{H} and the definition \eqref{defenergy_densitiesL} of $\mathcal{E}_\bk$, leads to
 \begin{equation} \label{densenergy0}
 \mathcal{E}_\bk \lsim  \big(\langle \bk\rangle^2+ |\bk|^{-2}\big) \,  \mathcal{D}^{(2)}_{\boldsymbol{\alpha}, \bk}.
 \end{equation}
(c) Case $j=2$ : control of $\langle \bk\rangle^{-4} \, \mathcal{E}^2 _\bk$. \\ [12pt]
Differentiating \eqref{E Lorentz Fourier} in time and then using \eqref{densenergy1}, we get
\begin{equation*}
|\partial_t^2\,\mathbb{E}|^2\lsim |\bk|^2\,|\partial_t\,\mathbb{H}|^2+ |\partial_t^2\,\mathbb{P}|^2\lsim 
\langle \bk\rangle^6 \; \mathcal{D}_{\boldsymbol{\alpha}, \bk}^{(2)} + \; \mathcal{D}_{\boldsymbol{\alpha}, \bk}^{1}.
\end{equation*}
Thus, as $\mathcal{D}_{\boldsymbol{\alpha}, \bk}^{1} \leq \langle \bk\rangle^2 \; \mathcal{D}_{\boldsymbol{\alpha}, \bk}^{(2)}\, \leq \langle \bk\rangle^6 \; \mathcal{D}_{\boldsymbol{\alpha}, \bk}^{(2)}$,  it follows that:
\begin{equation} \label{d2E}
\langle \bk\rangle^{-4} \, |\partial_t^2\,\mathbb{E}|^2 \lesssim \langle \bk\rangle^2 \; \mathcal{D}_{\boldsymbol{\alpha}, \bk}^{(2)} .
\end{equation}
Analogously, from \eqref{H Lorentz Fourier}, we get $\langle \bk\rangle^{-4} \, |\partial_t^2\,\mathbb{H}|^2 \lesssim \langle \bk\rangle^2 \; \mathcal{D}_{\boldsymbol{\alpha}, \bk}^{(2)}$ which, combined with \eqref{d2E} gives 
\begin{equation} \label{densenergy2}
\langle \bk\rangle^{-4}  \, \mathcal{E}_\bk^{2} \lsim \langle \bk\rangle^{2} \,  \mathcal{D}^{(2)}_{\boldsymbol{\alpha}, \bk}.
\end{equation}
Using (\ref{densenergy0}, \ref{densenergy1}, \ref{densenergy2}) in the definition (\ref{defcumenergy_densitiesL}) of the cumulated energy density $\mathcal{E}_\bk^{(2)}$, we get
\begin{equation}  \label{estcumdens}
\mathcal{E}_\bk^{(2)} \lsim \big(\langle \bk\rangle^2+ |\bk|^{-2}\big) \; \mathcal{D}_{\boldsymbol{\alpha}, \bk}^{(2)} 
\end{equation} 
 {\it Step 2: Control of the energy density $\mathcal{E}_{\boldsymbol{\Omega}, \bk}^{(2)}$.} We immediately observe from \eqref{defenergy_densitiesL},  \eqref{defdecay_densityL} (def. of $\mathcal{L}_\bk$ and $\mathcal{D}_\bk$), \eqref{defenergy_densities1L}, \eqref{defdecay_density1L}  (def. of $\mathcal{L}_\bk^1$ and $\mathcal{D}_\bk^1$), \eqref{defenergy_densities2L} and \eqref{defdecay_density2L}  
 (def. of $\mathcal{L}_\bk^2$ and $\mathcal{D}_\bk^2$) that
 $$
\mathcal{E}_{\boldsymbol{\Omega}, \bk} \lsim |\mathbb{P}|^2 + |\mathbb{M}|^2 +  \mathcal{D}_{\boldsymbol{\alpha}, \bk}, \quad \mathcal{E}_{\boldsymbol{\Omega}, \bk}^{1} \lsim \mathcal{D}_{\boldsymbol{\alpha}, \bk} +  \mathcal{D}_{\boldsymbol{\alpha}, \bk}^{1}, \quad \mathcal{E}_{\boldsymbol{\Omega}, \bk}^{2} \lsim \mathcal{D}_{\boldsymbol{\alpha}, \bk}^{1} +  \mathcal{D}_{\boldsymbol{\alpha}, \bk}^{2}
 $$
 thus, with the adequate linear combination, $\mathcal{E}_{\boldsymbol{\Omega}, \bk} ^{(2)} \lsim   \mathcal{D}_{\boldsymbol{\alpha}, \bk}^{(2)}  + |\mathbb{P}|^2 + |\mathbb{M}|^2 + \langle \bk\rangle^{-2} \, \mathcal{D}_{\boldsymbol{\alpha}, \bk}+ \langle \bk\rangle^{-4} \, \mathcal{D}_{\boldsymbol{\alpha}, \bk}^1$, i.e.
  \begin{equation} \label{add1}
 \mathcal{E}_{\boldsymbol{\Omega}, \bk} ^{(2)} \lsim   \mathcal{D}_{\boldsymbol{\alpha}, \bk}^{(2)}  + |\mathbb{P}|^2 + |\mathbb{M}|^2 .
 \end{equation}
It remains to control $\mathbb{P}$ and $\mathbb{M}$ which corresponds to the point (d) above. Using the constitutive equation \eqref{P Lorentz Fourier} and the definitions (\ref{defdecay_densityL}, \ref{defdecay_density1L}) of $(\mathcal{D}_{\boldsymbol{\alpha}, \bk}, \mathcal{D}_{\boldsymbol{\alpha}, \bk}^1)$, together with \eqref{densenergy0}, yields to
$$
|\mathbb{P}|^2 \lsim  |\partial_t\,\mathbb{P}|^2+|\partial_t^2\,\mathbb{P}|^2+|\mathbb{E}|^2\lsim  \mathcal{D}_{\boldsymbol{\alpha}, \bk} + \mathcal{D}_{\boldsymbol{\alpha}, \bk}^1 + \big(\langle \bk\rangle^2+ |\bk|^{-2}\big)  \; \mathcal{D}_{\boldsymbol{\alpha}, \bk}^{(2)},
$$
thus, as $\mathcal{D}_{\boldsymbol{\alpha}, \bk} \leq \mathcal{D}_{\boldsymbol{\alpha}, \bk}^{(2)}$ and $\mathcal{D}_{\boldsymbol{\alpha}, \bk}^1  \leq \langle \bk\rangle^2 \, \mathcal{D}_{\boldsymbol{\alpha}, \bk}^{(2)}$, (see \eqref{defcumdecay_densityL}),
\begin{equation}\label{P}
|\mathbb{P}|^2 \lsim   \big(\langle \bk\rangle^2+ |\bk|^{-2}\big)  \; \mathcal{D}_{\boldsymbol{\alpha}, \bk}^{(2)}.
\end{equation}
Likewise from \eqref{M Lorentz Fourier}, we have
\begin{equation}\label{M}
|\mathbb{M}|^2   \lsim   \big(\langle \bk\rangle^2+ |\bk|^{-2}\big)  \; \mathcal{D}_{\boldsymbol{\alpha}, \bk}^{(2)}.
\end{equation}
Using \eqref{P} and \eqref{M} in \eqref{add1}, we finally obtained
 \begin{equation} \label{add2}
\mathcal{E}_{\boldsymbol{\Omega}, \bk} ^{(2)} \lsim   \big(\langle \bk\rangle^2+ |\bk|^{-2}\big)  \; \mathcal{D}_{\boldsymbol{\alpha}, \bk}^{(2)}.
\end{equation}
The announced estimate \eqref{estimacion chidaL} follows from \eqref{estcumdens}, \eqref{add1} and the definition of $\mathcal{L}_\bk^{(2)}$.
 \end{proof}
\noindent Proceeding as in section \ref{Drude} for obtaining \eqref{eq.gronwall}, we deduce that, for some constant $\sigma > 0$, 
\begin{equation}\label{eq.gronwallL}
\forall \; \bk \neq 0, \quad \mathcal{L} _\bk^{(2)}(t)\leq \mathcal{L} _\bk^{(2)}(0) \; e^{- \frac{\sigma \, t}{\langle \bk\rangle^2+ |\bk|^{-2}}}.
\end{equation}
We shall use an estimate for the initial value $\mathcal{L} _\bk^{(2)}(0)$ which is the equivalent of the estimate for the Drude case. As getting this estimate is slightly more lengthy and tedious than for the Drude case, we give it in a lemma whose proof is delayed in the appendix, section \ref{sec-app-lemme-IC}.
\begin{lemma} \label{lem_estiL2initial}
	\begin{equation}\label{estimL2initial}
	\mathcal{L} _\bk^{(2)}(0)\lsim |\mathbb{E}_0(\bk)|^2+|\mathbb{H}_0(\bk)|^2.
	\end{equation}
	\end{lemma}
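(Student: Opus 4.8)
The plan is to evaluate every term contributing to $\mathcal{L}_\bk^{(2)}(0)$ directly at $t=0$, exploiting the fact that the initial conditions \eqref{CI} make all the polarization quantities (and their first time derivatives) vanish, and to generate the remaining time derivatives of the fields recursively from the evolution equations \eqref{planteamiento Lorentz Fourier}. Recall that $\mathcal{L}_\bk^{(2)} = \mathcal{L}_\bk + \langle\bk\rangle^{-2}\mathcal{L}^1_\bk + \langle\bk\rangle^{-4}\mathcal{L}^2_\bk$, where $\mathcal{L}^j_\bk$ is built from the $j$-th order time derivatives of $(\mathbb{E},\mathbb{H})$ and from $\partial_t^j\mathbb{P},\partial_t^{j+1}\mathbb{P}$ (and likewise for $\mathbb{M}$). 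Since at $t=0$ one has $\mathbb{E}(\bk,0)=\mathbb{E}_0(\bk)$, $\mathbb{H}(\bk,0)=\mathbb{H}_0(\bk)$ and $\mathbb{P}(\bk,0)=\mathbb{M}(\bk,0)=\partial_t\mathbb{P}(\bk,0)=\partial_t\mathbb{M}(\bk,0)=0$, the zeroth-order piece already satisfies $\mathcal{L}_\bk(0)\lsim|\mathbb{E}_0(\bk)|^2+|\mathbb{H}_0(\bk)|^2$, which is the desired bound for $j=0$.

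For the higher-order pieces I would compute the needed derivatives at $t=0$ in increasing order, alternating between the two families of equations. The constitutive laws \eqref{P Lorentz Fourier}, \eqref{M Lorentz Fourier} and their time derivatives express $\partial_t^p\mathbb{P}$ and $\partial_t^p\mathbb{M}$ in terms of the fields with \emph{bounded}, $\bk$-independent coefficients; for instance $\partial_t^2\mathbb{P}_j(\bk,0)=\mathbb{E}_0(\bk)$ and $\partial_t^3\mathbb{P}_j(\bk,0)=\partial_t\mathbb{E}(\bk,0)-\alpha_{e,j}\,\mathbb{E}_0(\bk)$, using $\mathbb{P}_j(\bk,0)=\partial_t\mathbb{P}_j(\bk,0)=0$. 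Maxwell's equations \eqref{E Lorentz Fourier}, \eqref{H Lorentz Fourier} and their derivatives, by contrast, produce each time derivative of $(\mathbb{E},\mathbb{H})$ at the cost of one factor of $\bk$: from the vanishing of $\partial_t\mathbb{P}(\bk,0)$ and $\partial_t\mathbb{M}(\bk,0)$ one gets $\partial_t\mathbb{E}(\bk,0)=\varepsilon_0^{-1}\,\rmi\,\bk\times\mathbb{H}_0(\bk)$ and $\partial_t\mathbb{H}(\bk,0)=-\mu_0^{-1}\,\rmi\,\bk\times\mathbb{E}_0(\bk)$, whence $\partial_t^2\mathbb{E}(\bk,0)$ is controlled by $|\bk|^2|\mathbb{E}_0(\bk)|$ (plus the already-bounded term $\partial_t^2\mathbb{P}(\bk,0)$), and similarly for $\partial_t^2\mathbb{H}$.

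Tracking these powers yields, for $j=0,1,2$, the estimate $\mathcal{L}^j_\bk(0)\lsim\langle\bk\rangle^{2j}\big(|\mathbb{E}_0(\bk)|^2+|\mathbb{H}_0(\bk)|^2\big)$, the factor $\langle\bk\rangle^{2j}$ arising precisely because the $j$-th order density requires $j$ applications of a Maxwell equation to the top-order field derivative. The weights $\langle\bk\rangle^{-2j}$ in the definition of $\mathcal{L}_\bk^{(2)}$ are tailored to cancel exactly this growth, so that each weighted term is $\lsim|\mathbb{E}_0(\bk)|^2+|\mathbb{H}_0(\bk)|^2$; summing over $j=0,1,2$ gives \eqref{estimL2initial}. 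I expect no conceptual difficulty: the only real work is the careful bookkeeping of the $|\bk|$ powers when differentiating \eqref{planteamiento Lorentz Fourier} repeatedly and substituting at $t=0$, together with the harmless sums $\sum_j,\sum_\ell$ over the oscillators. Note in particular that the strong dissipation assumption plays no role in this initial-data bound; it is needed only for the subsequent Gr\"onwall-type decay \eqref{eq.gronwallL}.
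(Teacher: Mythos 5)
Your proposal is correct and follows essentially the same route as the paper's proof in the appendix: evaluate $\mathcal{L}_\bk(0)$, $\langle\bk\rangle^{-2}\mathcal{L}^1_\bk(0)$ and $\langle\bk\rangle^{-4}\mathcal{L}^2_\bk(0)$ term by term, using the vanishing initial data for $\mathbb{P},\mathbb{M},\partial_t\mathbb{P},\partial_t\mathbb{M}$, generating $\partial_t^2\mathbb{P},\partial_t^3\mathbb{P}$ (and their magnetic counterparts) from the constitutive laws with $\bk$-independent constants, and paying one factor of $|\bk|$ per time derivative of $(\mathbb{E},\mathbb{H})$ via Maxwell's equations, which the weights $\langle\bk\rangle^{-2j}$ exactly absorb. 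Your observation that strong dissipation is not used here is also consistent with the paper's argument.
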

\noindent Therefore, using \eqref{estimL2initial} in \eqref{eq.gronwallL}, and as $\mathcal{L} _\bk \leq \mathcal{L} _\bk^{(2)}$, we have
\begin{equation} \label{estiLkL}
\mathcal{L} _\bk(t)\leq \big(|\mathbb{E}(\bk,0)|^2+|\mathbb{H}(\bk,0)|^2\big) \;   e^{- \frac{\sigma \, t}{\langle \bk\rangle^2+ |\bk|^{-2}}}.
\end{equation}

\noindent The exponential decay rate in \eqref{eq.gronwallL} degenerates when $|\bf k| \rightarrow + \infty$, as in the Drude case, but also  when $|\bf k| \rightarrow 0$ reason why the low (spatial) frequencies will need a special treatment leading to new assumptions involving the moments of the initial data. For this reason, 
we introduce the spaces, for $p \in \N$,
 \begin{equation} \label{spacesL1m} 
 \left\{ \begin{array}{l}
 \displaystyle L^1_{p}(\R^d) := \big\{ u \in L^1(\R^d) \; / \; (1+|x|)^p \, u \in L^1(\R^d) \big\} \\[12pt]
\displaystyle L^1_{p,0}(\R^d) := \Big\{ u \in L^1_{p}(\R^d) \; / \; \forall\;  \alpha \mbox{ such that } |\alpha| \leq p-1, 
 \int_{\R^d} x^\alpha \, u(x) \rmd x= 0 \Big\} \end{array}\right.
 \end{equation}
where  $\alpha = (\alpha_1, \alpha_2, \alpha_3)$ denotes a multi-index with ``length" $|\alpha| = \alpha_1+ \alpha_2+ \alpha_3$ and 
 \begin{equation} \label{notalpha}
 \partial^\alpha := \partial_1^{\alpha_1} \partial_2^{\alpha_2} \partial_1^{\alpha_3} \quad \mbox{and} \quad x^\alpha := x_1^{\alpha_1} x_2^{\alpha_2} x_3^{\alpha_3} \mbox{ for any }
 x = (x_1, x_2, x_3) \in \mathbb{R}^3.
 \end{equation}
{We point out that, from the definition \eqref{spacesL1m}, one has $L^1_{0}(\R^d)=L^1_{0,0}(\R^d)=L^1(\R^d)$.}
\\ [12pt]
\noindent We are now in position to state the main theorem which is expressed in terms of the augmented  energy $\mathcal{L}(t)$ that naturally replaces the one defined by \eqref{total_energy} for the Drude case, namely
\begin{equation}\label{total_energyL}
\left|\begin{array}{lll} \mathcal{L}(t) &  := &  \displaystyle  \mathcal{E}(\mathbf{E}, \mathbf{H}, t)+\frac{1}{2}\Big(\varepsilon_0\,\sum_{j=1}^{N_e} \Omega_{e,j}^2\,\sn{\partial_t\mathbf{P}_j(\cdot,t)}{L^2(\R^3)}^2+\mu_0\,\,\sum_{\ell=1}^{N_m} \Omega_{m,\ell}^2\,\sn{\partial_t\mathbf{M}_\ell(\cdot,t)}{L^2(\R^3)}^2\Big),  \\ [12pt] & + & \displaystyle\frac{1}{2}\Big(\varepsilon_0\,\sum_{j=1}^{N_e} \omega_{e,j}^2 \, \Omega_{e,j}^2\,\sn{\mathbf{P}_j(\cdot,t)}{L^2(\R^3)}^2+\mu_0\,\,\sum_{\ell=1}^{N_m} \omega_{m,\ell}^2\,\Omega_{m,\ell}^2\,\sn{\mathbf{M}_\ell(\cdot,t)}{L^2(\R^3)}^2\Big). 
\end{array} \right.
\end{equation}
\begin{theorem} \label{thm_Lorentz}
	For any $(\mathbf{E}_0, \mathbf{H}_0) \in L^2(\R^3)^3 \times L^2(\R^3)^3$ satisfying the free divergence condition, the total energy tends to $0$ when $t$ tends to $+\infty :$
	\begin{equation} \label{convergenceL}
	\lim_{t \rightarrow + \infty} \mathcal{L}(t) = 0.
	\end{equation}
	Moreover if for some integers $m \geq 0$ and $p \geq 0$,
	\begin{equation}\label{hypICL}
	\begin{array}{l}
	(\mathbf{E}_0, \mathbf{H}_0) \in H^{m}(\R^3)^3 \times  H^{m}(\R^3)^3, \quad 
	(\mathbf{E}_0, \mathbf{H}_0) \in L^1_{p,0}(\R^3)^3 \times  L^1_{p,0}(\R^3)^3,
	\end{array}
	\end{equation} 
	one has a polynomial decay rate
	\begin{equation} \label{polynomial_decay2}
	\mathcal{L}(t) \leq \frac{C_{\mathrm{HF}}^m(\mathbf{E}_0, \mathbf{H}_0)}{t^m} + \frac{C_{\mathrm{LF}}^p(\mathbf{E}_0, \mathbf{H}_0)}{t^{p+\frac{d}{2}}}
	\end{equation}
	where the above constants satisfy
	$$\left\{\begin{array}{l}\displaystyle C_{\mathrm{HF}}^m(\mathbf{E}_0, \mathbf{H}_0) \lsim \sn{\mathbf{E}_0}{H^{m}(\R^3)}^2 + \sn{\mathbf{H}_0}{H^{m}(\R^3)}^2, \\ [12pt]
	\displaystyle C_{\mathrm{LF}}^p (\mathbf{E}_0, \mathbf{H}_0)\lsim \sup_{|\alpha|=p} \big \|x^\alpha \mathbf{E}_0 \big\|_{L^1} + \sup_{|\alpha|=p} \big \|x^\alpha \mathbf{H}_0 \big\|_{L^1}. \end{array} \right.$$
\end{theorem}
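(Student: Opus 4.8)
The plan is to integrate the pointwise estimate \eqref{estiLkL} over $\bk$ via Plancherel's identity, as in the Drude case, but now to split the frequency domain into a high-frequency region $\{|\bk|\geq 1\}$ and a low-frequency region $\{|\bk|\leq 1\}$, since the exponential rate in \eqref{estiLkL} degenerates at \emph{both} ends. First I would record the master bound
\begin{equation*}
\mathcal{L}(t) = \int_{\R^3}\mathcal{L}_\bk(t)\,\rmd\bk \lsim \int_{\R^3}\big(|\mathbb{E}_0(\bk)|^2 + |\mathbb{H}_0(\bk)|^2\big)\,e^{-\frac{\sigma t}{\langle\bk\rangle^2+|\bk|^{-2}}}\,\rmd\bk,
\end{equation*}
valid for divergence-free data, which follows from \eqref{estiLkL} and Plancherel (applied term by term to the definition of $\mathcal{L}_\bk$). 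The convergence \eqref{convergenceL} is then immediate: the integrand is dominated by the integrable function $|\mathbb{E}_0|^2+|\mathbb{H}_0|^2$ and tends to $0$ pointwise for every $\bk\neq 0$ as $t\to+\infty$, so Lebesgue's dominated convergence theorem applies. For the decay rate I would estimate the two regions separately.

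On the high-frequency region $|\bk|\geq 1$ one has $\langle\bk\rangle^2+|\bk|^{-2}\leq 3\,\langle\bk\rangle^2$, hence the exponential is bounded by $e^{-\sigma t/(3\langle\bk\rangle^2)}$, and the analysis reduces essentially verbatim to the Drude computation in the proof of Theorem \ref{thm_Drude}: inserting the factor $\langle\bk\rangle^{2m}/t^m$ and using the boundedness on $\R^+$ of $r\mapsto r^m e^{-\sigma r/3}$ together with the Fourier characterization of the $H^m$ norm gives
\begin{equation*}
\int_{|\bk|\geq 1}\big(|\mathbb{E}_0|^2+|\mathbb{H}_0|^2\big)\,e^{-\frac{\sigma t}{3\langle\bk\rangle^2}}\,\rmd\bk \lsim t^{-m}\big(\sn{\mathbf{E}_0}{H^m(\R^3)}^2 + \sn{\mathbf{H}_0}{H^m(\R^3)}^2\big),
\end{equation*}
which produces the first term of \eqref{polynomial_decay2}.

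On the low-frequency region $|\bk|\leq 1$ one has instead $\langle\bk\rangle^2+|\bk|^{-2}\leq 3\,|\bk|^{-2}$, so the exponential is controlled by $e^{-\sigma t|\bk|^2/3}$; here the decay must come from the \emph{smallness of the data near $\bk=0$}, which is exactly what the vanishing-moment condition $(\mathbf{E}_0,\mathbf{H}_0)\in L^1_{p,0}(\R^3)^3$ supplies. Writing $\mathbb{E}_0(\bk)=(2\pi)^{-3/2}\int_{\R^3}\mathbf{E}_0(\bx)\,e^{-\rmi\bk\cdot\bx}\,\rmd\bx$ and expanding the kernel $e^{-\rmi\bk\cdot\bx}$ in its degree-$(p-1)$ Taylor polynomial in $\bk$ at the origin, the definition \eqref{spacesL1m} of $L^1_{p,0}$ annihilates every term of order $\leq p-1$; the multivariate integral remainder then leaves only terms of the form $\bk^\beta x^\beta$ with $|\beta|=p$, whence
\begin{equation*}
|\mathbb{E}_0(\bk)|^2 + |\mathbb{H}_0(\bk)|^2 \lsim |\bk|^{2p}\,\Big(\sup_{|\alpha|=p}\big\|x^\alpha\mathbf{E}_0\big\|_{L^1}^2 + \sup_{|\alpha|=p}\big\|x^\alpha\mathbf{H}_0\big\|_{L^1}^2\Big).
\end{equation*}
Inserting this, passing to spherical coordinates, and performing the scaling substitution $s=|\bk|\sqrt{t}$ in $\int_{|\bk|\leq 1}|\bk|^{2p}e^{-\sigma t|\bk|^2/3}\,\rmd\bk$ pulls out a factor $t^{-(p+d/2)}$ (with $d=3$) while leaving the convergent integral $\int_0^\infty s^{2p+d-1}e^{-\sigma s^2/3}\,\rmd s$, yielding the second term of \eqref{polynomial_decay2}. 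Summing the two regional bounds gives \eqref{polynomial_decay2} with the stated constants (quadratic in the data norms, as is forced by the quadratic nature of $\mathcal{L}$).

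The main obstacle is the low-frequency step. Everything at high frequency is a transcription of the Drude proof, but the $t^{-(p+d/2)}$ gain requires genuinely exploiting the cancellation encoded in $L^1_{p,0}$: one must justify the degree-$(p-1)$ Taylor expansion of the oscillatory kernel under the integral sign, control the remainder uniformly by a constant times $|\bk|^p\sum_{|\alpha|=p}|x^\alpha|$, and then invoke the finiteness of the weighted norms of \eqref{spacesL1m} on the finitely many multi-indices $|\alpha|=p$. The extra $d/2$ in the exponent is a dimensional artifact of the Gaussian scaling and explains why the low-frequency decay is faster than the bare $|\bk|^{2p}$ vanishing would naively suggest.
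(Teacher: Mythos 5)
Your proposal is correct and follows essentially the same route as the paper: Plancherel plus the pointwise bound \eqref{estiLkL}, dominated convergence for \eqref{convergenceL}, the split at $|\bk|=1$ with the Drude argument at high frequency, and the vanishing-moment/Taylor bound $|\mathbb{E}_0(\bk)|\lsim |\bk|^p\sup_{|\alpha|=p}\|x^\alpha\mathbf{E}_0\|_{L^1}$ followed by the Gaussian rescaling at low frequency. The only cosmetic difference is that you Taylor-expand the oscillatory kernel under the integral while the paper Taylor-expands $\mathbb{E}_0$ itself near $\bk=0$ (using that $L^1_{p,0}$ data have $C^p$ Fourier transforms with vanishing derivatives up to order $p-1$ at the origin); both give the same estimate, and your squared $L^1$ norms in $C^{\mathrm{LF}}_p$ are in fact the dimensionally consistent form.
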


\begin{proof}
	From the respective definitions of 	$\mathcal{L}$ and $\mathcal{L}_\bk$, see \eqref{defenergy_densitiesL} and \eqref{total_energyL}, we can use  Plancherel's identity in \eqref{estiLkL} to obtain
	\begin{equation} \label{energy_fourier}
	\mathcal{L}(t)=\int_{\R^3} \mathcal{L}_\bk(t) \; \rmd \bk \lsim \int_{\R^3} \big( \, |\mathbb{E}_0(\bk)|^2 + |\mathbb{H}_0(\bk)|^2 \, \big) \;   e^{- \frac{\sigma \, t}{\langle \bk\rangle^2+ |\bk|^{-2}}} \;  \rmd \bk.
	\end{equation}
	Next, as announced before, we treat low and high (space) frequencies separately. We begin with high frequencies who can be treated as for the Drude model.\\ [12pt]
	(i) If $|\bk|\geq 1$, then $\langle \bk\rangle^2+ |\bk|^{-2} \leq \langle \bk\rangle^2+ 1 \leq 2\, \langle \bk\rangle^2$. Thus 
	$$
	\int_{|\bk| \geq 1} \mathcal{L}_\bk(t) \; \rmd \bk \lsim \int_{|\bk| \geq 1} \big( \, |\mathbb{E}_0(\bk)|^2 + |\mathbb{H}_0(\bk)|^2 \, \big) \;   e^{- \frac{\sigma \, t}{2 \langle \bk\rangle^2}} \;  \rmd \bk.
	$$
	Bounding, in the right hand side, the integral over $|\bk|\geq 1$ by the one over $\R^3$, we can proceed as in the proof of Theorem \ref{thm_Drude} (with $\sigma/2$ instead of $\sigma$, cf. \eqref{energy_fourier-drude}) to obtain 
	\begin{equation} \label{estint1L}
	\int_{|\bk| \geq 1} \mathcal{L}_\bk(t) \; \rmd \bk \lsim\left(\frac{2m}{\sigma\,e \, t}\right)^{m} \; \Big( \sn{\mathbf{E}_0}{H^{m}(\R^3)}^2 + \sn{\mathbf{H}_0}{H^{m}(\R^3)}^2 \Big).
	\end{equation}
	(ii) If $|\bk|\leq 1$, then $\langle \bk\rangle^2+ |\bk|^{-2} \leq 3 \; |\bk|^{-2} $, (both $1$ and $|\bk|^2$ are smaller than $|\bk|^{-2}$) thus 
	\begin{equation} \label{estint1}
	\int_{|\bk| \leq 1} \mathcal{L}_\bk(t)  \, dk\lsim \int_{\R^3}  \Big( |\mathbb{E}_0(\bk)|^2 + |\mathbb{H}_0(\bk)|^2\Big) \;  e^{- \frac{|\bk|^2 \sigma t}{3}} \; \rmd \bk.
	\end{equation}
	The behaviour of the right hand side is obviously dominated by what happens when $|\bk|$ tends to $0$.\\[12pt]
	The condition $(\mathbf{E}_0, \mathbf{H}_0) \in L^1_{p,0}(\R^3)^3 \times  L^1_{p,0}(\R^3)^3$ implies in particular that
	$$
	\forall \; \alpha \; / \; |\alpha| \leq p-1, \quad \partial^\alpha \mathbb{E}_0(0) = \partial^\alpha \mathbb{H}_0(0).
	$$
Furthermore, {as $(\mathbf{E}_0, \mathbf{H}_0) \in L^1_{p,0}(\R^3)^3$, their Fourier transform $\mathbb{E}_0$ and $\mathbb{H}_0$ are bounded functions of class $\mathcal{C}^p$ whose partial derivatives are bounded up to the order $p$.}	Consequently, using a Taylor expansion at $0$ truncated  at order $p$, we have 
	$$
	|\mathbb{E}_0(\bk)| \lsim |\bk|^p \, \sup_{|\alpha|=p} \|\partial_\alpha \mathbb{E}_0\|_{L^\infty}, \quad |\mathbb{H}_0(\bk)| \lsim |\bk|^p \, \sup_{|\alpha|=p} \|\partial_\alpha \mathbb{H}_0\|_{L^\infty}
	$$
	which implies, using well known properties of the Fourier transform,
	$$
	|\mathbb{E}_0(\bk)| \lsim |\bk|^p \, \sup_{|\alpha|=p} \big \||x|^\alpha \mathbf{E}_0 \big\|_{L^1}, \quad |\mathbb{H}_0(\bk)| \lsim |\bk|^p \, \sup_{|\alpha|=p} \big \||x|^\alpha \mathbf{H}_0 \big\|_{L^1}
	$$
	Substituting the above in \eqref{estint1} yields
	$$
	\int_{|\bk| \leq 1}  \mathcal{L}_\bk(t) \, \rmd \bk.\lsim \Big( \int_{\R^d} |\bk|^{2p}\, e^{- \frac{2 |\bk|^2 \sigma t}{3}} \; dk \Big) 
	\Big(\sup_{|\alpha|=p} \big \|x^\alpha \mathbf{E}_0 \big\|_{L^1} + \sup_{|\alpha|=p} \big \|x^\alpha \mathbf{H}_0 \big\|_{L^1} \Big).
	$$
	With the change of variable $\mathbf{\xi }= \sqrt{\sigma t} \; \bk$, we compute that, for some constant $C(p,d) > 0$,
	$$
	\int_{\R^d} |\bk|^{2p}\, e^{- \frac{2 |\bk|^2 \sigma t}{3}} \; \rmd \bk = 
	\frac{1}{(\sigma t)^{p + \frac{d}{2}}} \; \int_{\R^d} |\xi|^{2p}\, e^{- \frac{2 |\xi|^2 }{3}} \; \rmd \mathbf{\xi } \equiv \frac{C(p,d)}{(\sigma t)^{p + \frac{d}{2}}} \; .
	$$
	Finally, with another constant $C$ that only depends on $d$, $p$ and the parameters of the model, we get
	\begin{equation} \label{estint2L}
	\int_{|\bk| \leq 1}  \mathcal{L}_\bk(t)  \, dk\lsim \frac{C}{t^{m + \frac{d}{2}}}
	\Big(\sup_{|\alpha|=m} \big \|x^\alpha \mathbf{E}_0 \big\|_{L^1} + \sup_{|\alpha|=m} \big \|x^\alpha \mathbf{H}_0 \big\|_{L^1} \Big).
	\end{equation}
	At the end, the final estimate \eqref{polynomial_decay2} is obtained by joining \eqref{estint1L} and \eqref{estint2L}.
\end{proof}
\begin{rem} [Comparison with the estimates of \cite{Nicaise2020}] The reader will check that, if we drop the second term in the estimate \eqref{polynomial_decay2}, which is specific to the problem in the whole space, our results for $m=1$ coincide qualitatively with the ones of \cite{Nicaise2020}, cf. \eqref{NicaiseEstimates} with $q=2$).
	\end{rem} 
\section{Extensions}\label{sec-Extensions-result}
\subsection{The problem in a bounded domain} \label{bounded_domains}
One can consider the evolution problem associated to equations \eqref{planteamiento Lorentz}  (or \eqref{planteamiento}) but posed in a bounded Lipschitz domain $\Omega$ of $\R^3$ and completed, for instance, with perfectly conducting conditions  (in some sense the ``Dirichlet" problem for Maxwell's equations)
\begin{equation}
{\bf E} \times {\bf n} = 0 \quad \mbox{ on } \partial \Omega, \quad (\mbox{with $\bf n$ the unit normal vector to $\partial \Omega $)},
\end{equation}
{where $\partial \Omega$ denotes the boundary of $\Omega$ and ${\bf n}$ the unit outward normal vector of $\partial \Omega$.}
In such a case, the analysis of sections \ref{Drude} and \ref{Lorentz_model} can be extended. The main difference is the the use of the Fourier transform in 
space has to be replaced by an adequate modal expansion. More precisely, we introduce the cavity eigenvalue problem:
\begin{equation} \label{modal_problem}
\left\{ \begin{array}{ll}
\mbox{Find } k \in \R \mbox{ and } ({\bf u}, {\bf v} ) \neq 0 \in L^2(\Omega)^3 \times L^2(\Omega)^3 \mbox{ such that} \\ [8pt]
\mathrm{i}\begin{pmatrix}  0 & \nabla\times\\ - \nabla\times & 0 \end{pmatrix}  \begin{pmatrix} \bf u \\ \bf v \end{pmatrix}  = k \; \begin{pmatrix} \bf u \\ \bf v \end{pmatrix}  \quad \mbox{in } \Omega, \\[12pt]
\nabla\cdot  {\bf u} =\nabla\cdot  {\bf v} = 0  \quad \mbox{in } \Omega, \quad  {\bf u} \times {\bf n}=  {\bf v} \cdot {\bf n}= 0
\quad \mbox{on }  \partial \Omega,
\end{array} \right.
\end{equation}
which corresponds to find the eigenvalues of the self-adjoint Maxwell operator $\mathcal{A}$ in the closed subspace of  $L^2(\Omega)^3$,  {$\mathcal{H} := \big\{ ({\bf u}, {\bf v})\in L^2(\Omega)^3 \, / \,\nabla\cdot  {\bf u} = \nabla\cdot  {\bf v} = 0 \ \mbox{ and } \  {\bf v}\cdot {\bf n}=0  \mbox{ on } \partial \Omega \}$,}  namely 
\begin{equation} \label{op_Maxwell}
\mathcal{A} \begin{pmatrix} \bf u \\ \bf v \end{pmatrix}  = \mathrm{i} \; \begin{pmatrix} \nabla \times {\bf v} \\ - \nabla \times \bf u \end{pmatrix} , \quad \forall \;  ({\bf u}, {\bf v})  \in D(\mathcal{A}) := \mathcal{H} \; \cap  \; \big(
{H_0(\mbox{rot};\Omega)\times H(\mbox{rot} ;\Omega) \big)},
\end{equation}
{where $H(\mbox{rot} ;\Omega):=\{ {\bf v }\in L^2(\Omega)^3 \mid \nabla \times {\bf v} \in L^2(\Omega)^3  \}$ and $H_0(\mbox{rot};\Omega):=\{ {\bf u}  \in H(\mbox{rot} ;\Omega)\mid {\bf u} \times {\bf n }=0 \mbox{ on }\partial \Omega \}$.}
{One show, see e.g.  \cite{Dau-1990} chapter IX, that  the operator $\mathcal{A}$ has a compact resolvent.}
From the theory of selfadjoint operators with compact resolvent, and using the symmetries of Maxwell's equations, one knows that there is a countable infinity of cavity modes 
\begin{equation} \label{cavity modes2}
( \pm \, k_n, {\bf u}_n^\pm,  {\bf v}_n^\pm) \in \R \times L^2(\Omega)^3 \times L^2(\Omega)^3 , \quad n \in \N, \quad \mbox{with } k_n \geq 0,\quad  k_n \rightarrow + \infty
\end{equation} 
with ${\bf u}_n^+= {\bf u}_n^-$ and ${\bf v}_n^+= - {\bf v}_n^-$ in such a way that 
$\big\{ ( {\bf u}_n^\pm,  {\bf v}_n^\pm)  , \; n \in \N \big\} $ form {an orthonormal basis of the Hilbert space  $\mathcal{H}$.}
Then, one can decompose the electromagnetic field as
$$
{\bf E}(\cdot,t) = \sum_\pm \sum_{n=0}^{+\infty}  \; \mathbb{E}_n^{\pm}(t) \;  {\bf u}_n^\pm, \quad  {\bf H}(\cdot,t) = \sum_\pm \sum_{n=0}^{+\infty}  \; \mathbb{H}_n^{\pm}(t) \;  {\bf v}_n^\pm,
$$
and the auxiliary field ${\bf P}_j$  and ${\bf M}_{\ell}$   (understood as in \eqref{rem_Drude}) accordingly 
$$
{\bf P}_j(\cdot,t) = \sum_\pm \sum_{n=0}^{+\infty}  \; \mathbb{P}_{j,n}^{\pm}(t) \;  {\bf u}_n^\pm, \quad  {\bf M}_{\ell}(\cdot,t) = \sum_\pm \sum_{n=0}^{+\infty}  \; \mathbb{M}_{\ell,n}^{\pm}(t) \; {\bf v}_n^\pm.
$$
The rest of the analysis follows exactly the same lines as for $\Omega = \R^3$, modulo the following substitutions
$$
{\bf k} \in \R^3 \; \rightarrow \; \{ (\pm, n), n \in \mathbb{N} \}, \quad \int_{\R^3} \rmd \bk \; \rightarrow \; \sum_\pm \sum_{n=0}^{+\infty} \;  , \quad \cdots \quad \mbox{etc.} 
$$
The results are then similar to the one of Theorem \ref{thm_Lorentz} {without the second term in the right hand side  of the estimate \eqref{polynomial_decay2} and the hypothesis of the second line of \eqref{hypICL}}, provided some modifications on the assumptions for the initial data which must  now satisfy 
\begin{equation}\label{hypICstat}
(\bE_0,\bH_0) \in \mathcal{H}_0 := (\mbox{Ker } \mathcal{A})^\perp, \quad  (\mbox{orthogonality in } \mathcal{H}) \quad(\mbox{see also remark } \ref{rem_stat})
\end{equation} 
and the Sobolev regularity \eqref{hypICL}(first line) must be replaced by
\begin{equation}\label{hypIC2}
(\mathbf{E}_0, \mathbf{H}_0) \in D(\mathcal{A}^m) \cap \mathcal{H}_0,  \quad(\mbox{see also remark } \ref{rem_reg}).
\end{equation} 
\begin{rem} \label{rem_stat}
	The (finitely dimensional) space $\mbox{Ker }\mathcal{A}$ is the space of electromagnetic static fields
	$$
	{\mbox{Ker } \mathcal{A} = \big\{ ({\bf u}, {\bf v}) \in L^2(\Omega)^3 \, / \, \nabla\cdot {\bf u} = \nabla\cdot  {\bf v} = 0, \nabla \times {\bf u} = \nabla \times {\bf v} = 0\mbox{ on }  \Omega  \mbox{ and }{\bf u} \times {\bf n}=0,  {\bf v} \cdot {\bf n} = 0  \mbox{ on } \partial \Omega \}.}
	$$
	When $\Omega$ in {\it simply connected}, it is known that $\mbox{Ker } \mathcal{A} = \{0\}$ and that its dimension increases with the complexity of the topology of $\Omega$. {For the proof of these assertions, we refer to \cite{Dau-1990}, chapter IX.}
	In some sense, the condition \eqref{hypICstat} can be seen as a substitute to the condition \eqref{hypICL}(second line).
	\end{rem}
{\begin{rem} \label{rem_reg}
The condition $(\mathbf{E}_0, \mathbf{H}_0) \in D(\mathcal{A}^m)$ implies $(\mathbf{E}_0, \mathbf{H}_0) \in H^{m}_{loc}(\Omega)^3 \times  H^{m}_{loc}(\Omega)^3$.
Furthermore, for $C^{\infty}$ domains $\Omega$, the condition $(\mathbf{E}_0, \mathbf{H}_0) \in D(\mathcal{A}^m)$ (see e.g. \cite{Dau-1990}, chapter IX) is equivalent to $(\mathbf{E}_0, \mathbf{H}_0) \in H^{m}(\Omega)^3 \times  H^{m}(\Omega)^3$  for any integer $m>0$.
\end{rem}}
\subsection{The case of mixed Drude-Lorentz models} \label{Drude_Lorentz}
{In the sums \eqref{epsmuLorentz} defining $\varepsilon(\omega)$ and $\mu(\omega)$ the resonance frequency  $\omega_{e,j}$ or $\omega_{m,\ell}$ are either  strictly positive or zero. For instance, for $\varepsilon(\omega)$, we shall say that }
\begin{equation} \label{DLterms}
\frac{\Omega_{e,j}^2}{\omega^2 + i \, \alpha_{e,j} \, \omega - \omega_{e,j}^2}, \; \omega_j > 0 \mbox{ is a Lorentz term,} \quad \quad  \frac{\Omega_{e,j}^2}{\omega^2 + i \, \alpha_{e,j} \, \omega } \mbox{ is a Drude term.}
\end{equation}
In section \ref{Drude} (standard Drude model), we consider the case where $\varepsilon(\omega)$ and $\mu(\omega)$ contained a single Drude term while
in section \ref{Lorentz_model}, we consider the case where $\varepsilon(\omega)$ and $\mu(\omega)$ only contained Lorentz terms, because of assumption 
(\ref{hypomegabis}, second line). \\[12pt]
It is natural to look at the cases where $\varepsilon(\omega)$ (resp. $\mu(\omega)$) contains Lorentz terms but also Drude terms whose number
is $N_{d,e}$ (respectively $N_{d,m}$). It appears that our Lyapunov approach can easily handle these cases (modulo minor additional manipulations) with the following results:
\begin{itemize}
	\item If $N_{d,e}> 0$ and $N_{d,m}>0$, the result is the same as for the Drude model (see theorem \ref{thm_Drude}).
	\item If $N_{d,e}> 0$ and $N_{d,m}=0$ (or the contrary), the result is the one for Lorentz (see theorem \ref{thm_Lorentz}). 
\end{itemize}
\appendix
\section{Appendix}\label{sec.appendix}

\subsection{On the dissipation condition of \cite{Figotin} for Lorentz models}
\label{sec-append-figotin}
Let us recall that, when the limit  \eqref{limits} exist almost everywhere on the real axis (which is the case for Lorentz models), the  sufficient dissipation  condition (6.4)  given in  \cite{Figotin} reads  
\begin{equation} \label{dissFigotinappendix}
\mbox{for a. e. }\omega \in \R, \quad \Imag \, \omega \, \hat{\chi}_e (\omega) \geq \gamma(\omega)^{-1} > 0, \quad \gamma\in L^1_{loc}(\R).
\end{equation}
By virtue of the expression \eqref{epsmuLorentz}(a)  of the complex permittivity $\varepsilon(\omega)$ and the formula \eqref{eq.complexperm}, we have, as soon as {$\omega\in \R$ and $\omega^2 + i \, \alpha_{e,j} \, \omega - \omega_{e,j}^2 \neq 0$ for all $j\in \{ 1,\ldots,N_e\}$:}
$$
\Imag \, \omega \, \hat \chi_e(\omega) =   \sum_{j=1}^{N_e} 
\frac{ \alpha_{e,j} \,  \Omega_{e,j}^2 \; \omega^2}
{|\omega^2 + i \, \alpha_{e,j} \, \omega - \omega_{e,j}^2|^2}.
$$
Let $J _0:= \{ j\in \{1,\ldots, N_e\} \; / \; \omega_{e,j} = 0\}$ and  $J_+:= \{ j \in \{ 1,\ldots, N_e\}\; / \; \alpha_{e,j} > 0\}$. 
\begin{itemize}
	\item  [(i)]$J_+ = \emptyset$, i. e.  for all $j \in \{1, \ldots, N_e \}$ , $\alpha_{e,j}=0$
	then $\Imag \, \omega \, \hat \chi_e(\omega)=0$
and \eqref{dissFigotin}(i) can not hold.
\item  [(ii)] $J_+ \neq \emptyset$. We distinguish two subcases  
\begin{itemize} \item[(a)] $J_+ \cap J_0 \neq \emptyset$. This means 
that one $\omega_{e,j}$ vanishes, for instance $\omega_{e,1} = 0$, and  $\alpha_{e,1} > 0$. Then
$$ \ds
\mathcal{I}m \, \omega \, \hat \chi_e(\omega) = \frac{ \alpha_{e,1} \,  \Omega_{e,1}^2 \; \omega^2}
{|\omega^2 + i \, \alpha_{e,j} \, \omega |^2}  +  \sum_{j=2}^{N_e} 
\frac{ \alpha_{e,j} \,  \Omega_{e,j}^2 \; \omega^2}
{|\omega^2 + i \, \alpha_{e,j} \, \omega - \omega_{e,j}^2|^2} \geq  \frac{\alpha_{e,1} \,  \Omega_{e,1}^2}{|\omega+\rmi \alpha_{e,1}| }
$$
in which case \eqref{dissFigotin}(ii) holds true with {$\gamma: \omega \mapsto \big(\alpha_{e,1} \,  \Omega_{e,1}^2\big)^{-1}\, |\omega+\rmi \alpha_{e,1}|\in L^1_{loc}(\mathbb{R})$. }\\
\item [(b)] $J_+ \cap J_0= \emptyset$. 
In this case
$
\ds \Imag\, \omega \, \hat \chi_e(\omega) \underset{\omega \rightarrow 0} \sim  
\Big( \sum_{j\in J_+}  \frac{ \alpha_{e,j} \,  \Omega_{e,j}^2}{\omega_{e,j}^4}\Big)  \; \omega^2,
$
and \eqref{dissFigotin}(i) can not hold.
\end{itemize} 
\end{itemize}
\subsection{On the energy indentity \eqref{Identitegenerale}}\label{sec-app-Lya}
{Let $\bE,\bH\in C^{0}\big(\R^+,H(\mbox{rot} ; \R^3)\big)\cap C^1\big(\R^+,L^2(\R^3)^3\big)$ and $\bE, \, \bH, \, \bD, \bB\in C^{1}(\R^+,L^2(\R^3)^3)$} be solutions of the equations \eqref{maxwell3D}, \eqref{CL1} and \eqref{CL2} where $\chi_\nu\in C^3(\mathbb{R}^+) $ for $\nu=e,m$. We explain in this appendix how to obtain the energy identity \eqref{Identitegenerale} with \eqref{Lyapunovgeneral} and \eqref{Dissipationfunction}. \\ [12pt]
From equations \eqref{maxwell3D}, \eqref{CL1} and \eqref{CL2} \, it is straightforward to deduce the identity (with $\mathcal{E}(t)$ the electromagnetic energy, see \eqref{def_energyEM0}) 
\begin{equation}\label{eq.energymax}
\frac{\rmd \mathcal{E}(t)} {\rmd t}+  \mathcal{I}(t)  = 0, \quad \mathcal{I}(t) := \int_{\R^3}  \partial_t \textbf{P}_{\mathrm{tot}}(\bx,t)  \,  \bE(\bx,t) \rmd \bx+  \int_{\R^3}  \partial_t \textbf{M}_{\mathrm{tot}}(\bx,t) \,  \bH (\bx,t)    \rmd \bx.
\end{equation}
Differentiating the constitutive laws \eqref{CL2} in time, we have
$$
\begin{array}{l}
\ds \partial_t\textbf{P}_{\mathrm{tot}}(\bx,t)=\varepsilon_0 \int_{0}^t \chi'_{e}(t-s)\, \bE(\bx,s) \; \rmd s + \varepsilon_0 \, \chi_e(0) \, \bE(\bx,t) \\ [12pt]
\ds \partial_t\textbf{M}_{\mathrm{tot}}(\bx,t)=\mu_0 \int_{0}^t \chi'_{m}(t) \, \bH(\bx,t-s) \; \rmd s + \mu_0 \, \chi_m(0) \, \bH(\bx, t)
\end{array}
$$
from which we deduce that
\begin{equation} \label{expI}
\left| \begin{array}{l}
\ds \mathcal{I}(t) =  \mathcal{I}_e(t) + \mathcal{I}_m(t) \quad \mbox{with} \\ [12pt]
\ds \mathcal{I}_e(t) := \varepsilon_0 \, \chi_e(0) \int_{\R^3} |\bE(\bx,t)|^2 \rmd \bx + \varepsilon_0 \, \int_{\R^3} \Big( \bE(\bx,t)  \cdot \int_0^t \chi'_{e}(t-s)\, \bE(\bx,s) \; \rmd s  \Big) \rmd \bx, \\ [12pt]
\ds \mathcal{I}_m(t) := \mu_0 \, \chi_m(0) \int_{\R^3} |\bH(\bx,t)|^2\rmd \bx + \mu_0 \, \int_{\R^3} \Big( \bH(\bx,t)  \cdot \int_0^t \chi'_{m}(t-s)\, \bH(\bx,s) \; \rmd s  \Big) \rmd \bx. 
\end{array} \right.
\end{equation}
It remains to transform $\mathcal{I}_e(t)$ and $\mathcal{I}_m(t)$.\\ [12pt]
 The basic technical ingredient concerns convolution type quadratic forms:
\begin{equation} \label{QCF}
Qu(t) := \int_0^t k'(t-s) \, u(s) u'(t) \, ds,
	\end{equation}
where $k(t)$ is a given convolution kernel.  When $k(t) = C \, \delta(t)$, formally $Qu(t) = C \, \frac{d}{dt} |u(t)|^2 $. The next lemma (lemma 3.2 in \cite{Riv-04}) generalizes this observation to a smooth kernel. For completeness, we provide here a constructive proof (which is not given in \cite{Riv-04}).
\begin{lemma} \label{lemQCF} 
	Given $k \in C^2(\mathbb{R}^+)$ and  ${u \in C^1(\mathbb{R}^+)}$ with $u(0) = 0$, $Qu(t) $ given by \eqref{QCF} satisfies
\begin{equation} \label{propQCF}
{\left| \begin{array}{lll}
	Qu(t) & = & \ds \frac{1}{2} \; \frac{d}{dt} \Big[ \big( k(t) - k(0) \big) |u(t)|^2- \int_0^t k'(t-s) \,  \big(u(s)- u(t)\big)^2\,  \rmd s\Big] \\ [12pt]  
	&  &  \ds - \frac{1}{2} \,  k'(t) \, |u(t)|^2 +\frac{1}{2} \int_0^t k''(t-s) \,  \big(u(s)- u(t)\big)^2\, \rmd s.
\end{array} \right.}
\end{equation}	

\end{lemma}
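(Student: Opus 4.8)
The plan is to exploit the fact that in the definition $Qu(t)=\int_0^t k'(t-s)\,u(s)\,u'(t)\,\rmd s$ the factor $u'(t)$ carries no $s$-dependence, so that $Qu(t)=u'(t)\,I(t)$ with $I(t):=\int_0^t k'(t-s)\,u(s)\,\rmd s$. The whole identity will then follow by splitting $u(s)=\big(u(s)-u(t)\big)+u(t)$ inside $I(t)$ and treating the two resulting contributions separately. For the piece that is constant in $s$ I would use the elementary substitution $\tau=t-s$ to evaluate $\int_0^t k'(t-s)\,\rmd s=k(t)-k(0)$, which already accounts for the factor $k(t)-k(0)$ appearing in the statement.

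First I would deal with the \emph{diagonal} contribution $u'(t)\,u(t)\,\big(k(t)-k(0)\big)$. Writing $u'(t)\,u(t)=\tfrac12\,\tfrac{d}{dt}|u(t)|^2$ and then applying the product rule in reverse,
\[
\tfrac12\big(k(t)-k(0)\big)\tfrac{d}{dt}|u(t)|^2=\tfrac12\,\tfrac{d}{dt}\Big[\big(k(t)-k(0)\big)|u(t)|^2\Big]-\tfrac12\,k'(t)\,|u(t)|^2,
\]
which reproduces at once the first total-derivative term and the correction $-\tfrac12 k'(t)|u(t)|^2$ in \eqref{propQCF}.

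Next I would treat the \emph{off-diagonal} contribution $u'(t)\int_0^t k'(t-s)\big(u(s)-u(t)\big)\,\rmd s$ by recognising it as (half of) a time derivative. Setting $J(t):=\int_0^t k'(t-s)\big(u(s)-u(t)\big)^2\,\rmd s$ and differentiating by Leibniz's rule, the upper-limit boundary term vanishes because the integrand equals $k'(0)\big(u(t)-u(t)\big)^2=0$ at $s=t$; differentiating the integrand in $t$ then produces $\int_0^t k''(t-s)(u(s)-u(t))^2\,\rmd s$ from the kernel and $-2u'(t)\int_0^t k'(t-s)(u(s)-u(t))\,\rmd s$ from the chain rule acting on $u(t)$. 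Solving for the off-diagonal term yields $-\tfrac12 J'(t)+\tfrac12\int_0^t k''(t-s)(u(s)-u(t))^2\,\rmd s$, and collecting the two total derivatives into a single bracket gives exactly \eqref{propQCF}.

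The only point requiring genuine care, and the step I expect to be the main obstacle, is the differentiation of $J(t)$, where $t$ enters simultaneously through the kernel $k'(t-s)$, through $u(t)$ inside the squared difference, and through the upper limit of integration; one must verify that the boundary contribution truly cancels and that the hypotheses $k\in C^2(\R^+)$ and $u\in C^1(\R^+)$ are precisely what is needed to justify differentiation under the integral sign. The assumption $u(0)=0$ is in fact not required for the pointwise identity itself, but it guarantees that the bracketed quantity vanishes at $t=0$, which is what makes \eqref{propQCF} convenient when it is later integrated in time in the derivation of \eqref{Identitegenerale}.
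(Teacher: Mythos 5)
Your proof is correct and follows essentially the same route as the paper's: the same splitting $u(s)=u(t)+\big(u(s)-u(t)\big)$, the same product-rule rewriting of the diagonal term, and the same identification of the off-diagonal term as half a time derivative of $\int_0^t k'(t-s)\big(u(s)-u(t)\big)^2\,\rmd s$ up to the $k''$ correction, with the upper-limit boundary term vanishing because the integrand is zero at $s=t$. Your side remark that $u(0)=0$ is not needed for the pointwise identity is also accurate.
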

\begin{proof}
The guiding idea is to make appear time derivatives of square quantities in the expression of $Qu(t)$. \\ [12pt]
The main trick is to write $u(s) = u(t) + \big(u(s) - u(t)\big)$ in \eqref{QCF}, in such a way that
\begin{equation} \label {exp0}
		Qu(t) = \frac{1}{2} \; \big( k(t) - k(0) \big) \; \frac{d}{dt} |u(t)|^2 + \int_0^t k'(t-s) \, \big(u(s)- u(t)\big)  u'(t) \,  \rmd s.
	\end{equation}
	On the one hand, one has
	\begin{equation} \label {exp1}
	\frac{1}{2} \; \big( k(t) - k(0) \big) \; \frac{d}{dt} |u(t)|^2= \frac{1}{2} \frac{d}{dt} \Big(\big( k(t) - k(0) \big) |u(t)|^2\Big) - \frac{1}{2} \; k'(t) |u(t)|^2 .
	\end{equation}
	On the other hand, observing that $\big(u(t)- u(s)\big) \, u'(t) = \frac{1}{2} \frac{d}{dt} \big[ \big(u(s)- u(t)\big)^2]$ we have
	$$
	\int_0^t k'(t-s) \, \big(u(s)- u(t)\big) \,  u'(t) \,   \rmd s = -  \frac{1}{2} \int_0^t k'(t-s) \,  \frac{d}{dt} \big[\big(u(s)- u(t)\big)^2 \, \big]\,  \rmd s,
	$$
i.e., since $k'(t-s) \,  \frac{d}{dt} \big[\big(u(s)- u(t)\big)^2 \, \big] = \frac{d}{dt} \big[ \, k'(t-s) \, \big(u(s)- u(t)\big)^2 \, \big] - k''(t-s) \, \big(u(s)- u(t)\big)^2$, 
	$$
	\left| \begin{array}{lll}
	\ds \int_0^t k'(t-s) \, \big(u(s)- u(t)\big)  u'(t) \, \rmd s & = & \ds -  \; \frac{1}{2} \int_0^t \frac{d}{dt}    \big[ k'(t-s) \, \big(u(s)- u(t)\big)^2 \, \big]\,  \rmd s   \\ [12pt] & + & \ds \frac{1}{2} \int_0^t k''(t-s) \,  \big(u(s)- u(t)\big)^2 \,  \rmd s .
	\end{array} \right. 
	$$
	Finally 
	$
	\ds \int_0^t \frac{d}{dt} \big[ \, k'(t-s) \, \big(u(s)- u(t)\big)^2 \, \big]  \rmd s ={ \frac{d}{dt} \int_0^t  k'(t-s) \, \big(u(s)- u(t)\big)^2 \,   \rmd s }
	$, thus
\begin{equation} \label{exp2} 
\left| \begin{array}{lll}
\ds \int_0^t k'(t-s) \, \big(u(s)- u(t)\big)  u'(t) \, \rmd s& = &  \ds -  \; \frac{1}{2} \frac{d}{dt} \int_0^t  k'(t-s) \, \big(u(s)- u(t)\big)^2 \,   \rmd s    \\ [12pt]  
&&+\ds \frac{1}{2} \int_0^t k''(t-s) \,  \big(u(s)- u(t)\big)^2 \,  \rmd s .
\end{array} \right.
\end{equation}
%
Finally, substituting \eqref{exp1} and \eqref{exp2} in \eqref{exp0} leads to \eqref{propQCF}.
\end{proof}
\noindent
Now, we wish to transform the integrand (in space) in the second term of the expression \eqref{expI} of $\mathcal{I}_e(t)$ by making appear a quantity of the form \eqref{QCF}.
For this, it is useful to introduce primitives (in time) of the fields. More precisely, 
for ${\bf F} =\bE, \bH,  \bD, \bB,  \bP,  \bM$, we define for $t\geq 0$
$$
{\bf F}_p(\bx,t)=\int_{0}^t f(\bx,s) \, \mathrm{d}s, \quad  \forall\;  t\geq 0 \mbox{ and } \ \mathrm{a. e.} \  \bx\in \mathbb{R}^3. 
$$ 
In order to transform $\mathcal{I}_e(t)$, we first perform an integration by parts in time to get, since $\bE_p(\bx,0) =0$,
$$
\int_0^t \chi'_{e}(t-s)\, \bE(\bx,s) \; \rmd s  \equiv \int_0^t \chi'_{e}(t-s)\, \partial_t \bE_p(\bx,s) \; \rmd s = \chi_e'(0) \, \bE_p(\bx,t)  + \int_0^t \chi''_{e}(t-s)\,  \bE_p(\bx,s) \; \rmd s.
$$
As a consequence, since $\bE = \partial_t \bE_p$, we have
$$
\bE(\bx,t)  \cdot \int_0^t \chi'_{e}(t-s)\, \bE(\bx,s) \; \rmd s = \frac{\chi_e'(0)}{2} \, \frac{d}{dt} \, \big|\bE_p(\bx,t)\big|^2 + \int_0^t \chi''_{e}(t-s)\,  \bE_p(\bx,s) \cdot \partial_t \bE_p (\bx,t)\; \rmd s .
$$
The second term in the right hand side of the above expression is a sum of terms of the form \eqref{QCF} with $k = \chi_e'$. Thus,  integrating in space and then using the lemma \ref{lemQCF}, and substituting the resulting equality in the expression \eqref{expI} of $\mathcal{I}_e(t)$ gives (note that the terms involving {$\chi_e'(0)$} cancel each other) 
\begin{equation} \label{expIe} 
\left| \begin{array}{lll}
\mathcal{I}_e(t) & = &\ds \frac{\varepsilon_0}{2} \; \frac{d}{dt}  \Big( \chi'_e(t) \int_{\R^3} |\bE_p(\bx,t)|^2 \,  \rmd \bx\Big)-\frac{\varepsilon_0}{2} \frac{d}{dt}  \Big(\int_{0}^{t} \chi_e''(t-s)  \Big(  \int_{\R^3} |\bE_p(\bx,t) - \bE_p(\bx,s) |^2 \,  \rmd \bx \Big) \, \rmd s \Big)    \\ [12pt]  
&  &+\; \ds { \varepsilon_0 \, \chi_e(0) \int_{\R^3} |\bE(\bx,t)|^2 \rmd \bx } -  \frac{\varepsilon_0}{2} \;  \chi''_e(t)  \int_{\R^3} |\bE_p(\bx,t)|^2 \,  \rmd \bx \\[12pt]
&&\ds+\, \frac{\varepsilon_0}{2} \int_0^t \chi'''_e(t-s)  \,  \Big(  \int_{\R^3} |\bE_p(\bx,t) - \bE_p(\bx,s) |^2 \,  \rmd \bx \Big) \, \rmd s.
\end{array} \right.
\end{equation}
Analogously, we have 
\begin{equation} \label{expIm} 
\left| \begin{array}{lll}
\mathcal{I}_m(t) & = &\ds \frac{\mu_0}{2} \; \frac{d}{dt}  \Big( \chi'_m(t) \int_{\R^3} |\bH_p(\bx,t)|^2 \,  \rmd \bx\Big)-\frac{\mu_0}{2} \frac{d}{dt}  \Big(\int_{0}^{t} \chi_m''(t-s)  \Big(  \int_{\R^3} |\bH_p(\bx,t) - \bH_p(\bx,s) |^2 \,  \rmd \bx \Big) \, \rmd s \Big)    \\ [12pt]  
&  &+ \, \ds { \mu_0 \, \chi_m(0) \int_{\R^3} |\bH(\bx,t)|^2 \rmd \bx } -  \frac{\mu_0}{2} \;  \chi''_m(t)  \int_{\R^3} |\bH_p(\bx,t)|^2 \,  \rmd \bx \\[12pt]
&&\ds+ \, \frac{\mu_0}{2} \int_0^t \chi'''_m(t-s)  \,  \Big(  \int_{\R^3} |\bH_p(\bx,t) - \bH_p(\bx,s) |^2 \,  \rmd \bx \Big) \, \rmd s.
\end{array} \right.
\end{equation}
Finally, \eqref{Identitegenerale}  is simply obtained by gathering \eqref{eq.energymax}, \eqref{expI}, \eqref{expIe} and \eqref{expIm}.
\subsection{Estimating $\mathcal{L} _\bk^{(2)}(0)$ in the Lorentz case}\label{sec-app-lemme-IC}
Below, we use the notation of Section \ref{Lorentz_model} and our goal is to proof the estimate of Lemma \ref{lem_estiL2initial}, namely
	\begin{equation}\label{estimL2initialbis}
	\mathcal{L} _\bk^{(2)}(0)\lsim |\mathbb{E}_0(\bk)|^2+|\mathbb{H}_0(\bk)|^2.
	\end{equation}
First, by definition of  $\mathcal{L} _\bk(0)$ and since $\mathbb{P}(\bk,0)=\mathbb{M}(\bk,0)=\partial_t \mathbb{P}(\bk,0)=\partial_t \mathbb{M}(\bk,0) = 0$,
\begin{equation}\label{L00}
\mathcal{L} _\bk(0)=\frac{1}{2}\, \big(\varepsilon_0\,|\mathbb{E}_0(\bk)|^2+\mu_0\,|\mathbb{H}_0(\bk)|^2\big).
\end{equation}
Next we estimate $
\langle \bk\rangle^{-2} \, \mathcal{L}^1 _\bk(0)  = \langle \bk\rangle^{-2} \, \mathcal{E}^1_\bk(0)   + \langle \bk\rangle^{-2} \, \mathcal{E}_{\boldsymbol{\Omega}, \bk}^1(0)  
$.\\ [12pt]
From equations \eqref{E Lorentz Fourier} and \eqref{H Lorentz Fourier} at $t=0$, since $\partial_t\mathbb{P}(\bk,0)=\partial_t \mathbb{M}(\bk,0) = 0$,
$$
\langle \bk\rangle^{-2} \, \mathcal{E}^1 _\bk(0) \lsim \langle \bk\rangle^{-2} \, \big( |\bk \times \mathbb{E}_0(\bk)|^2+|\bk \times \mathbb{H}_0(\bk)|^2 \big) \lsim  |\mathbb{E}_0(\bk)|^2+|\mathbb{H}_0(\bk)|^2.
$$
Since $\mathbb{P}(\bk,0)=\mathbb{M}(\bk,0)=\partial_t \mathbb{P}(\bk,0)=\partial_t \mathbb{M}(\bk,0) = 0$, from \eqref{P Lorentz Fourier} and \eqref{M Lorentz Fourier} at $t=0$, we have
$$
 \langle \bk\rangle^{-2} \, \mathcal{E}_{\boldsymbol{\Omega}, \bk}^1(0)  \lsim  \langle \bk\rangle^{-2} \big(|\partial_t^2 \mathbb{P}(\bk,0)|^2 + |\partial_t^2 \mathbb{M}(\bk,0)|^2\big)  \lsim    |\mathbb{E}_0(\bk)|^2+|\mathbb{H}_0(\bk)|^2
$$
and as a consequence 
 \begin{equation} \label{L10}
\langle \bk\rangle^{-2} \, \mathcal{L}^1 _\bk(0) \lsim  |\mathbb{E}_0(\bk)|^2+|\mathbb{H}_0(\bk)|^2.
\end{equation}
Finally we estimate $
\langle \bk\rangle^{-4} \, \mathcal{L}^2 _\bk(0)  = \langle \bk\rangle^{-4} \, \mathcal{E}^2_\bk(0)   + \langle \bk\rangle^{-4} \, \mathcal{E}_{\boldsymbol{\Omega}, \bk}^2(0)  
$.\\ [12pt]
For bounding $\langle \bk\rangle^{-4} \, \mathcal{E}^2_\bk(0)$, we differentiate \eqref{E Lorentz Fourier} (resp. \eqref{H Lorentz Fourier}) and evaluate the resulting equations  at $t=0$ to express $\partial_t^2 \mathbb{E}(\bk,0)$ (resp. {$\partial_t^2 \mathbb{H}(\bk,0)$) in terms of $\bk \times \partial_t\,\mathbb{H}(\bk,0)$ and  $\partial_t^2 \mathbb{P}(\bk,0)$  (resp.  $\bk \times \partial_t\,\mathbb{E}(\bk,0)$ and $\partial_t^2 \mathbb{M}(\bk,0)$)}. From this, we deduce 
\begin{equation*}
\langle \bk\rangle^{-4}  \, \mathcal{E}^2_\bk(0) \lesssim \langle \bk\rangle^{-4}  \,( |\bk|^2\,\mathcal{E}^1_\bk(0) + |\partial_t^2 \mathbb{P}(\bk,0)|^2+ \partial_t^2 \mathbb{M}(\bk,0)|^2)\lsim \langle \bk\rangle^{-2}  \,\mathcal{L}^1 _\bk(0) \lsim  |\mathbb{E}_0(\bk)|^2+|\mathbb{H}_0(\bk)|^2
\end{equation*}
where for the last inequality we have used \eqref{L10}. For the last term we first observe that 
$$
\langle \bk\rangle^{-4} \, \mathcal{E}_{\boldsymbol{\Omega}, \bk}^2(0)   \lsim \langle \bk\rangle^{-4}   (| \partial_t^3\mathbb{P}(\bk,0)|^2 + | \partial_t^3\mathbb{M}(\bk,0)|^2+  | \partial_t^2\mathbb{P}(\bk,0)|^2 + | \partial_t^2\mathbb{M}(\bk,0)|^2)$$ 
and we obtain $\partial_t^3\mathbb{P}(\bk,0)$ (resp. $\partial_t^3\mathbb{M}(\bk,0)$) in function of $\partial_t \mathbb{E}(\bk,0)$ and $\partial_t^2\mathbb{P}(\bk,0)$ (resp. $\partial_t \mathbb{H}(\bk,0)$ and $\partial_t^2\mathbb{M}(\bk,0)$)
from equations \eqref{P Lorentz Fourier} and \eqref{M Lorentz Fourier}  after time differentiation. This leads to
\begin{equation*}
\langle \bk\rangle^{-4}  \, \mathcal{E}_{\boldsymbol{\Omega}, \bk}^2(0) \lesssim \langle \bk\rangle^{-4}  \, \langle \bk\rangle^{2}\,\mathcal{L}^1_\bk(0) \lsim \langle \bk\rangle^{-2}  \,\mathcal{L}^1_\bk(0) \lsim  |\mathbb{E}_0(\bk)|^2+|\mathbb{H}_0(\bk)|^2
\end{equation*}
{where uses \eqref{L10} for the last inequality.}
Adding the last two inequalities we obtain
 \begin{equation} \label{L20}
\langle \bk\rangle^{-4} \, \mathcal{L}^2_\bk(0) \lsim  |\mathbb{E}_0(\bk)|^2+|\mathbb{H}_0(\bk)|^2.
\end{equation}
Finally. \eqref{estimL2initialbis} is deduced from \eqref{L00}, \eqref{L10} and \eqref{L20}.

\subsection{Well-posdness and regularity of the solutions of the Cauchy problem in generalized Lorentz media}\label{sec-apend-D}
The (Cauchy) problem \eqref{planteamiento Lorentz}  can be rewritten as a generalized Schr\"{o}dinger  evolution problem:
\begin{equation}\label{eq.schro}
\frac{\rmd \, \bU}{\rmd\, t} + \rmi\, \bbA \, \bU=0 \mbox{ with } \bU(0)=\bU_0,
\end{equation}
where  the Hamiltonian $\bbA$ is an unbounded operator on the Hilbert-space:
\begin{equation}\label{eqdefHilbertspace}
\mathcal{H}:=L^2(\mathbb{R}^3)^3\times L^2(\mathbb{R}^3)^3 \times L^2(\mathbb{R}^3)^{3N_e}
	\times L^2(\mathbb{R}^3)^{3N_e} \times L^2(\mathbb{R}^3)^{3N_m}  \times L^2(\mathbb{R}^3)^{3N_m} ,
\end{equation}
endowed by the following inner product : for any $\bU=(\bE, \bH, \bP, \dot{\bP},\bM , \dot{\bM}) $ and $\bU^{\prime}=(\bE^{\prime}, \bH^{\prime}, \bP^{\prime},\bM^{\prime}, \dot{\bP}^{\prime}, \dot{\bM}^{\prime})$, where $(\bP,\bM)$ is defined as in  \eqref{notPM}, (and the same for $(\bP^{\prime},\bM^{\prime})$, $(\dot{\bP}, \dot{\bM})$ and $(\dot{\bP}^{\prime}, \dot{\bM}^{\prime})$) 
\begin{equation*} \label{defPS}
\begin{array}{|lllll}
(\bU, \bU')_{\mathcal{H}} \! &  \! =  \! &  \!  \ds \frac{\varepsilon_0}{2}\ (\bE,\bE^{\prime})_{L^2} + \frac{\mu_0}{2}\ (\bH,\bH^{\prime})_{L^2} \!  \! &\! + \!&  \! \displaystyle\frac{ \varepsilon_0}{2}  \sum_{j=1}^{N_e} \omega_{e,j}^2\, \Omega_{e,j}^2\, ( \bP_{j},\bP_{j}^{\prime})_{L^2} +
\frac{ \varepsilon_0}{2}  \sum_{j=1}^{N_e} \Omega_{e,j}^2\, (\dot{\bP}_{j},\dot{\bP}_{j}^{\prime})_{L^2}  \\
&&  \!  \! &\! + \!&  \!  \ds \frac{\mu_0}{2} \sum_{\ell=1}^{N_m} \Omega_{e,\ell}^2\, (\bM_{\ell},\bM_{\ell}^{\prime})_{L^2}+ \frac{\mu_0}{2} \sum_{\ell=1}^{N_m}  \Omega_{m,\ell}^2\, ( \dot{\bM}_{\ell},\dot{\bM}_{\ell}^{\prime})_{L^2}\ ,
\end{array}
\end{equation*}
 More precisely, if we introduce  $\bbA: D(\bbA)\subset \mathcal{H}\to \mathcal{H}$ defined by 
\begin{equation}\label{eq.defHamil}
\forall \; \bU =(\bE, \bH, \bP, \dot{\bP},\bM , \dot{\bM})\in D(\bbA), \quad 	\bbA \bU := -\rmi \; 	\begin{pmatrix} \varepsilon_0^{-1} \nabla \times \bH+\sum \; \Omega_{e,j}^2 \bP_j \\[10pt] 
 -\mu_0^{-1} \nabla \times \bE+ \sum \; \Omega_{m,\ell}^2 \dot{\bM}_{\ell}  \\[8pt]- \dot{\bP} \\[6pt]
  \alpha_{e,j} \dot{\bP}_j+\omega_{e,j}^2\,\bP_j-\bE\\[6pt] 
 - \dot{\bM} \\[6pt]  
 \alpha_{m,\ell}\, \dot{\bM}_m+\omega_{m,\ell}^2\,\bM_\ell-\bH
	\end{pmatrix} 
\end{equation}
the domain $D(\bbA)$ (dense in $\mathcal{H}$) being given by 
\begin{equation}\label{eq.defDomain}
D(\bbA):=H(\mbox{rot};\R^3) \times H(\mbox{rot};\R^3) \times L^2(\mathbb{R}^3)^{3N_e}
\times L^2(\mathbb{R}^3)^{3N_e} \times L^2(\mathbb{R}^3)^{3N_m}  \times L^2(\mathbb{R}^3)^{3N_m} , \end{equation}
we observe that one can rewrite \eqref{planteamiento Lorentz}  as \eqref{eq.schro}  with  the 
initial condition $\bU_0=(\bE_0, \bH_0, 0, 0, 0,0)\in  \mathcal{H}$. \\ [12pt]
The well-posedness of \eqref{eq.schro}  is ensured by the following lemma:
\begin{lemma}
If the assumption \eqref{WD} holds, the operator $-\rmi \, \bbA$ is a maximal dissipative operator. 
\end{lemma}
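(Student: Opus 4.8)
The plan is to invoke the Lumer--Phillips theorem. Since $D(\bbA)$ is dense in $\mathcal{H}$ (it contains $H(\mbox{rot};\R^3)\times H(\mbox{rot};\R^3)$ in the first two slots and all of $L^2$ in the remaining ones, and $H(\mbox{rot};\R^3)$ is dense in $L^2(\R^3)^3$), it suffices to prove two facts: (i) $-\rmi\bbA$ is dissipative, i.e.\ $\operatorname{Re}(-\rmi\bbA\,\bU,\bU)_{\mathcal{H}}\le 0$ for every $\bU\in D(\bbA)$; and (ii) the range of $\lambda\,I+\rmi\bbA$ is all of $\mathcal{H}$ for some (hence every) $\lambda>0$. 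These together give that $-\rmi\bbA$ is maximal dissipative and generates a contraction semigroup on $\mathcal{H}$.

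First I would establish (i) by a direct computation that is nothing but the infinitesimal form of the energy identity \eqref{decay_total_energyL}. Writing out $-\rmi\bbA\,\bU$ componentwise from \eqref{eq.defHamil} and pairing it with $\bU$ in the weighted inner product, the curl coupling cancels because $(\nabla\times\bH,\bE)_{L^2}=(\bH,\nabla\times\bE)_{L^2}$ for fields in $H(\mbox{rot};\R^3)$ (no boundary terms on $\R^3$), and likewise the skew couplings $\bE\leftrightarrow\dot\bP_j$ and $\bP_j\leftrightarrow\dot\bP_j$, together with their magnetic analogues, cancel after taking real parts. What survives is exactly
\begin{equation*}
\operatorname{Re}(-\rmi\bbA\,\bU,\bU)_{\mathcal{H}}=-\frac{\varepsilon_0}{2}\sum_{j=1}^{N_e}\alpha_{e,j}\,\Omega_{e,j}^2\,\sn{\dot\bP_j}{L^2(\R^3)}^2-\frac{\mu_0}{2}\sum_{\ell=1}^{N_m}\alpha_{m,\ell}\,\Omega_{m,\ell}^2\,\sn{\dot\bM_\ell}{L^2(\R^3)}^2\le 0,
\end{equation*}
where nonnegativity uses $\alpha_{e,j},\alpha_{m,\ell}\ge 0$ from \eqref{hypalpha} (the hypothesis \eqref{WD} guaranteeing in addition that at least one damping term is active).

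For the maximality (ii), I would fix $\lambda>0$ and solve $\lambda\,\bU+\rmi\bbA\,\bU=\bF$ for given $\bF\in\mathcal{H}$. The key is that the auxiliary unknowns can be eliminated algebraically: the $\bP_j$- and $\bM_\ell$-blocks give $\dot\bP_j=\lambda\bP_j-(\text{data})$ and $(\lambda^2+\alpha_{e,j}\lambda+\omega_{e,j}^2)\bP_j=\bE+(\text{data})$, where the quadratic $\lambda^2+\alpha_{e,j}\lambda+\omega_{e,j}^2$ never vanishes for $\lambda>0$ since its roots have non-positive real part. Substituting the resulting expressions for $\sum\Omega_{e,j}^2\dot\bP_j$ and $\sum\Omega_{m,\ell}^2\dot\bM_\ell$ into the two Maxwell equations reduces the whole system to
\begin{equation*}
a(\lambda)\,\bE-\varepsilon_0^{-1}\nabla\times\bH=\tilde{f}_1,\qquad b(\lambda)\,\bH+\mu_0^{-1}\nabla\times\bE=\tilde{f}_2,
\end{equation*}
with scalar coefficients $a(\lambda)=\lambda+\sum_{j}\Omega_{e,j}^2\lambda/(\lambda^2+\alpha_{e,j}\lambda+\omega_{e,j}^2)$ and $b(\lambda)$ its magnetic analogue. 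For real $\lambda>0$ every summand is real and positive, so $a(\lambda)\ge\lambda>0$ and $b(\lambda)\ge\lambda>0$ (equivalently $a(\lambda)=\lambda\,\varepsilon(\rmi\lambda)/\varepsilon_0$, whose positivity also follows from the Herglotz property \eqref{eq.mathpass}). Eliminating $\bH$ yields a curl--curl problem $a(\lambda)\bE+(\varepsilon_0\mu_0\,b(\lambda))^{-1}\nabla\times\nabla\times\bE=\tilde{F}$, whose variational form $a(\lambda)(\bE,\bE')_{L^2}+(\varepsilon_0\mu_0\,b(\lambda))^{-1}(\nabla\times\bE,\nabla\times\bE')_{L^2}$ is bounded and coercive on $H(\mbox{rot};\R^3)$; Lax--Milgram then provides a unique $\bE\in H(\mbox{rot};\R^3)$, from which $\bH\in H(\mbox{rot};\R^3)$ and all the $L^2$ components are recovered, giving a solution $\bU\in D(\bbA)$.

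I expect the maximality step to be the main obstacle, and within it the delicate points are checking that the algebraic elimination is legitimate (nonvanishing of the quadratic denominators, which is precisely where positivity of $\lambda$ enters) and verifying coercivity of the reduced curl--curl form on \emph{all} of $H(\mbox{rot};\R^3)$. The latter works exactly because the zeroth-order coefficient $a(\lambda)$ is strictly positive, so no compactness or Helmholtz decomposition is needed despite the infinite-dimensional kernel of the curl operator on $\R^3$. Once (i) and (ii) are in hand, Lumer--Phillips closes the argument.
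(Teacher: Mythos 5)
Your proof is correct and follows essentially the same route as the paper: dissipativity via the energy identity after integration by parts, then maximality by algebraically eliminating the polarization unknowns (using that the quadratic denominators cannot vanish for the chosen spectral parameter) and solving the resulting curl--curl problem by Lax--Milgram on $H(\mbox{rot};\R^3)$. The only cosmetic difference is that you take a real shift $\lambda>0$, so the reduced coefficients $a(\lambda)$, $b(\lambda)$ are real and strictly positive and coercivity is immediate, whereas the paper works with a general $\omega\in\C^+$ and extracts coercivity from the imaginary parts of $\omega\varepsilon(\omega)$ and $-(\omega\mu(\omega))^{-1}$ via the Herglotz property.
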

\begin{proof}
To show that $-\rmi \, \bbA$ is a maximal dissipative operator (see \cite{Dau-1992}, theorem 8 p 340) is equivalent to show one one hand that $-\rmi \, \bbA $ is dissipative, i.e. $\operatorname{Im}(\bbA \bU , \bU)\leq 0$ for all $\bU\in D(\bbA)$ and that  there exists $\omega\in  \C^+$ such that  $\bbA-\omega  \, \mathrm{I}$ is surjective. Thus the proof will consists in two steps.\\[4pt]
\noindent {\bf Step 1: $-\rmi \,  \bbA$ is dissipative.}
Performing  $(\bbA \bU , \bU)_{\mathcal{H}}$ thanks to \eqref{defPS} and \eqref{eq.defHamil}, after integration by parts, i. e. s $(\nabla \times \bH, \bE)_{L^2}=(\bH, \nabla \times  \bE)_{L^2}=\overline{(\nabla \times  \bE, \bH)}_{L^2}$, one finds that
$$
\operatorname{Im}(\bbA \bU , \bU) =  -\sum_{j=1}^{N_e} \alpha_{e,j} \, \Omega_{e,j}^2  \, \|\dot{\bP}_j\|_{L^2}^2 - \sum_{\ell=1}^{N_m} \alpha_{m,\ell} \, \Omega_{m,\ell}^2  \, \|\dot{\bM}_\ell\|_{L^2}^2\leq 0.
$$
\noindent {\bf Step 2: for any $\omega\in  \C^+$,  $(\bbA-\omega  \mathrm{I}) D(\bbA)= \mathcal{H}$.}
We prove also the injectivity of the operator $\bbA-\omega  \mathrm{I}$ by showing that for any $\bF=(\be, \bh, \bp, \dot{\bp}, \bm, \dot{\bm}) \in \mathcal{H}$ the system:
\begin{equation}\label{eq.systresolv}
(\bbA-\omega \, \mathrm{I}) \bU= \bF
\end{equation}
admits a unique solution $\bU =(\bE, \bH, \bP, \dot{\bP},\bM , \dot{\bM}) \in D(\bbA)$. To prove this, one first eliminates $\dot{\bP}$ and $\dot{\bM}$   in the system \eqref{eq.systresolv} by using that  $ \dot {\bP} =-\rmi \omega \, \bP -\rmi \, {\bp} $ and  $ \dot {\bM} =-\rmi \omega \, \bM -\rmi \, {\bm}$  and  obtain the following expression  for $\bP$ and 
$\bM$ in term of  $\bE$ and $\bH$:
\begin{subequations} 
\begin{empheq}[left=\empheqlbrace]{align}
 & {\bP}_{j}= - \frac{\bE}{q_{e,j}(\omega)}+ {\bF}_{p,j} (\omega) \quad  \mbox{ where }  \quad \bF_{p,j}(\omega)=\frac{( -\rmi \alpha_{e,j}-\omega ) \bp_ j-\rmi \dot{\bp}_j}{q_{e,j}(\omega)}, \label{eq.pj} \\[10pt]
&\bM_{\ell} = -\frac{\bH}{q_{m,l}(\omega)}+ \bF_{m,\ell} (\omega) \quad \mbox{ where } \quad  \bF_{m,l}(\omega)=\frac{( -\rmi \alpha_{m,\ell}-\omega ) \bm_\ell-\rmi \dot{\bm}_{\ell}}{q_{m,\ell}(\omega)} ,\label{eq.ml}
\end{empheq}
\end{subequations}
where 
and
$
q_{e,j}(\omega)=\omega^2+ i \alpha_{e,j} \omega- \omega_{e, j}^2\neq 0 \mbox{ and } q_{m,\ell}(\omega)=\omega^2+ i \alpha_{m, \ell} \omega- \omega_{m, \ell}^2\neq 0 \quad  \mbox{for $\omega\in \C^+$}.$
We point out that if $\omega_*$ is zero of $q_e$, $-\overline{\omega}_*$ is also zero of $q_e$, thus $\operatorname{Im}(\omega_*)=-\alpha_{e,j}/2\leq 0$ and therefore $\omega\in \C^+$ is not a zero of $q_{e,j}$. The same holds for the polynomials $q_{m,\ell}$.
Thus, it follows the expressions of $\dot{\bP}$ and $\dot{\bM}$ in terms of $\bE$ and $\bH$:
\begin{subequations} 
\begin{empheq}[left=\empheqlbrace]{align}
& \dot {\bP}_{j}= \frac{ \rmi \, \omega  \,\bE}{q_{e,j}(\omega)}+ \dot{\bF}_{p,j} (\omega) \quad  \ \ \mbox{ where }  \quad \dot{\bF}_{p,j} (\omega)=\frac{\rmi  \, \omega_{e,j}^2\,  \bp_ j  -\omega \dot{\bp}_{ j}}{q_{e,j}(\omega)}, \label{eq.dotpj} \\
&\dot {\bM}_{\ell} = \frac{ \rmi \, \omega  \,\bH}{q_{m,l}(\omega)}+ \dot{\bF}_{m,\ell} (\omega) \quad \mbox{ where } \quad  \dot{\bF}_{m,l}(\omega)=\frac{\rmi  \, \omega_{m,l}^2\,  \bm_{\ell }  -\omega\dot{\bm}_{\ell}}{q_{m,\ell}(\omega)}.\label{eq.dotml}
\end{empheq}
\end{subequations}
Eliminating $\dot{\bP}_j$ and $\dot{\bM}_{\ell}$ in the two first equations  of \eqref{eq.systresolv} yields to the following system in $\bE$ and $\bH$:
\begin{subequations} 
\begin{empheq}[left=\empheqlbrace]{align}
-&\nabla \times \bH- \rmi \omega \, \varepsilon (\omega) \, \bE= \bF_e(\omega) \quad  \mbox{ with } \quad \bF_e(\omega)= \varepsilon_0[ \rmi \,\be-\sum_{j=1}^{N_e}  \Omega_{e,j}^2 \dot{\bF}_{p,j} (\omega)],  \label{eq.e2} \\ 
&\nabla \times \bE - \rmi \omega  \mu(\omega) \, \bH= \bF_h(\omega) \quad  \mbox{ with } \quad  \bF_h(\omega)=  \mu_0\, [\rmi\, \bh-  \sum_{l=1}^{N_m}\Omega_{m,l}^2\dot{\bF}_{m,l} (\omega)]. \label{eq.h2}
\end{empheq}
\end{subequations}
where $\varepsilon(\omega)$ and $\mu(\omega)$ are defined by \eqref{epsmuLorentz}. Substituting 
\begin{equation}\label{eq.H}
\bH=\frac{\rmi}{\omega \mu(\omega)} \,\big(-\nabla \times \bE+\bF_h(\omega)\big)
\end{equation}
in \eqref{eq.e2} yields
\begin{equation}\label{eq.E}
-\frac{1}{\omega \mu(\omega)}\nabla\times (\nabla \times \bE)+\omega \, \varepsilon(\omega)\,\bE=-\frac{1}{\omega \mu(\omega)} \nabla\times  \bF_h(\omega)
+\rmi \, \bF_e(\omega).
\end{equation}
One shows using standard arguments that $\bE\in H(\mbox{rot};\R^3)$ is solution of \eqref{eq.E} if and only if $\bE$ is  solution to the following variational problem in $H_{\curl}(\R^3)$:
$$
a(\bE,\bpsi)=l(\bpsi), \quad \forall \bphi \in H(\mbox{rot};\R^3),
$$
where the sequilinear form $a$ and the  antilinear form $l$ are defined for all $\bphi, \bpsi \in H(\mbox{rot};\R^3) $:
\begin{equation*}\label{eq.lax-milgramm}
a(\bphi,\bpsi)=  \int_{\R^3} \Big( \! -\frac{(\nabla \times  \bphi)   \cdot \overline{ ( \nabla \times  \bpsi)}}{\omega \mu(\omega)}   + \omega  \varepsilon(\omega)   \bphi \cdot \overline{\psi}\, \Big) \,\mathrm{d}\bx,  \ l(\bpsi)= \int_{\R^3} \Big( -\frac{\bF_h(\omega)  }{\omega \mu(\omega)}  \cdot \overline{ ( \nabla \times  \bpsi)}+\rmi \, \bF_e(\omega) \cdot \overline{\psi}\Big)\, \mathrm{d}\bx.
\end{equation*}
By using the Cauchy-Schwarz inequality, it is clear that $a$  
(resp. $l$) is continuous on  $ H(\mbox{rot};\R^3)^2$ (resp. on $ H(\mbox{rot};\R^3)$). Furthermore, $\omega \mapsto \omega \epsilon(\omega) $ and $\omega  \mapsto \omega \mu(\omega)$ are Herglotz functions. Thus, $\omega \mapsto -1/(\omega \mu(\omega))$ is aslo an Herglotz function. Hence, one shows that for all $\omega\in \C^+$:
$$
|a(\bphi,\bphi)|\geq |\operatorname{Im}a(\bphi,\bphi)|\geq \gamma(\omega) \|\bphi\|_{H(\mbox{rot};\R^3)}^2 \mbox{ where } \alpha(\omega)= \min\Big( \operatorname{Im}\big(-(\omega \mu(\omega))^{-1} \big),\operatorname{Im} \big(\omega \varepsilon(\omega)\big)\Big)>0.
$$
Thus, by the Lax-Milgram theorem,  \eqref{eq.E} admits a unique solution in $H(\mbox{rot};\R^3)$. Then,   from \eqref{eq.H},    \eqref{eq.pj}, \eqref{eq.dotpj},  \eqref{eq.ml}, \eqref{eq.dotml},  $\bH, \bP, \dot{\bP}, \bM, \dot{\bM} $ are defined uniquely in term of $\bE$ as elements of $L^2(\R^3)^3$. Moreover, with \eqref{eq.e2}, $\nabla \times\bH\in L^2(\R^3)^3$, thus \eqref{eq.systresolv} admits a unique solution $\bU$ in $D(\bbA)$. 
\end{proof}
\noindent Under the weak dissipation assumption \eqref{WD}, $-\rmi \, \bbA$ is a maximal dissipative operator. Hence, $\bbA$ is a closed operator (see e.g. \cite{Dau-1992}, theorem 8 page 340) and its  spectrum $\sigma(\bbA)$  is contained in the closure of the lower half plane $\overline{\C^-}$.
Furthermore, it follows from the Lumer-Phillips theorem (see e.g. \cite{Dau-1992}, theorem 7 pages 336-337) that $-\rmi \bbA$ is the generator of a contraction semi-group $\{\mathcal{S}(t)\}_{t\geq 0}$ of class $\mathcal{C}^0$. 
This implies the following standard results  on the well-posdness of  \eqref{eq.schro} and the regularity of its solutions (where the Hilbert space $D(\bbA)$ is endowed with the graph norm defined by: $\|\bU\|_{D(\bbA)}^2=\|\bU\|^2_{\mathcal{H}}+\|\bbA \bU\|^2_{\mathcal{H}}, \, \forall \bU \in D(\bbA)$.

\begin{prop}\label{prop.wellposdness}
Let $m$ be an integer satisfying $m\geq 1$. If the initial condition $\bU_0\in D(\bbA^m)$ then the Cauchy problem 
\eqref{eq.schro} admits a unique strong solution $\bU \in C^{m}(\R^+, \mathcal{H})\cap  C^{m-1}(\R^+, D(\bbA))$ given by $\bU(t)=\mathcal{S}(t) \bU_0$ for $t\geq 0$. If $\bU_0\in\mathcal{H}$, 
\eqref{eq.schro} admits a unique mild  solution $\bU \in C^{0}(\R^+, \mathcal{H})$. 
\end{prop}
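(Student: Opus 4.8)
The plan is to deduce the entire statement from standard $\mathcal{C}^0$-semigroup theory, since the analytic core has already been established: the preceding lemma shows that, under \eqref{WD}, the operator $-\rmi\,\bbA$ is maximal dissipative, and the Lumer--Phillips theorem (see \cite{Dau-1992}) therefore provides a contraction semigroup $\{\mathcal{S}(t)\}_{t\geq 0}$ of class $\mathcal{C}^0$ on $\mathcal{H}$ whose generator is $-\rmi\,\bbA$. All that remains is to read off the regularity of the orbits $t \mapsto \mathcal{S}(t)\bU_0$ according to how regular $\bU_0$ is, and to verify uniqueness.

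First I would treat the two extreme cases directly. For $\bU_0 \in \mathcal{H}$, one sets $\bU(t) = \mathcal{S}(t)\bU_0$; strong continuity of the semigroup gives $\bU \in C^0(\R^+,\mathcal{H})$, and by definition this is the mild solution of \eqref{eq.schro}. For $\bU_0 \in D(\bbA)$ (the case $m=1$), the classical regularity result for generators of $\mathcal{C}^0$-semigroups yields that $t\mapsto\mathcal{S}(t)\bU_0$ belongs to $C^1(\R^+,\mathcal{H}) \cap C^0(\R^+, D(\bbA))$ and satisfies $\tfrac{\rmd}{\rmd t}\mathcal{S}(t)\bU_0 = -\rmi\,\bbA\,\mathcal{S}(t)\bU_0 = -\rmi\,\mathcal{S}(t)\,\bbA\bU_0$, so that $\bU(t)=\mathcal{S}(t)\bU_0$ is indeed a strong solution.

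For $m \geq 2$ I would argue by induction, using two standard properties of $\mathcal{C}^0$-semigroups: $\mathcal{S}(t)$ leaves each $D(\bbA^k)$ invariant, and $\bbA$ commutes with $\mathcal{S}(t)$ on $D(\bbA)$. If $\bU_0 \in D(\bbA^m)$, then $\bbA^k\bU_0 \in D(\bbA^{m-k})$ for $0 \leq k \leq m$, and differentiating the orbit $k$ times gives $\tfrac{\rmd^k}{\rmd t^k}\bU(t) = (-\rmi\,\bbA)^k \mathcal{S}(t)\bU_0 = \mathcal{S}(t)\,(-\rmi\,\bbA)^k\bU_0$. For $k=m$ the right-hand side is $\mathcal{S}(t)(-\rmi\,\bbA)^m\bU_0$, continuous in $t$ with values in $\mathcal{H}$, while for $k=m-1$ we have $(-\rmi\,\bbA)^{m-1}\bU_0 \in D(\bbA)$, so the $(m-1)$-st derivative is continuous with values in $D(\bbA)$ (graph norm). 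This delivers exactly $\bU\in C^{m}(\R^+,\mathcal{H})\cap C^{m-1}(\R^+, D(\bbA))$.

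Finally, uniqueness is the usual semigroup argument: if $\bV$ is any strong solution with $\bV(0)=\bU_0$, then for fixed $t$ the map $s\mapsto\mathcal{S}(t-s)\bV(s)$ is differentiable on $[0,t]$ with derivative $\mathcal{S}(t-s)\big(\tfrac{\rmd}{\rmd s}\bV(s)+\rmi\,\bbA\,\bV(s)\big)=0$, whence $\bV(t)=\mathcal{S}(t)\bU_0=\bU(t)$; the same computation applied to mild solutions gives their uniqueness as well. I do not expect any genuine obstacle here: the only non-routine step, namely the surjectivity of $\bbA-\omega\,\mathrm{I}$ obtained via a Lax--Milgram argument on \eqref{eq.E}, has already been carried out in the proof of the preceding lemma, and everything above is bookkeeping layered on top of it.
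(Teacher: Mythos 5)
Your proposal is correct and follows exactly the route the paper intends: the paper does not write out a proof but refers to \cite{Dau-1992} (theorem 1, p.~399) for $m=1$ and \cite{Bre-11} (Theorem 7.5, p.~191) for $m>1$, and your argument is precisely the standard semigroup proof of those cited results, built on the maximal dissipativity of $-\rmi\,\bbA$ established in the preceding lemma and the Lumer--Phillips theorem.
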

\noindent For the proof of this classical result,  we refer to \cite{Dau-1992} (theorem 1 page 399)  for the case $m=1$ and to  \cite{Bre-11} (Theorem 7.5 page 191) for the case $m>1$.  For the definition of mild solutions when the initial data $\bU_0$ is  in $\mathcal{H}$, we refer to \cite{Dau-1992} pages 404-405.
\begin{rem}
Under \eqref{WD}, $-\rmi \bbA$ is dissipative. 
The identity   $\rmd \mathcal{L}(t)/\rmd t=2\operatorname{Im}(\bbA \bU(t),\bU(t))_{\mathcal{H}}\leq 0$ with $
\mathcal{L}(t)=(\bU(t),\bU(t))_{\mathcal{H}}
$ (the abstract version of \eqref{decay_total_energy} or \eqref {decay_total_energyL}) implies the decay in time of $\mathcal{L}(t)$.
\end{rem}
\noindent We now define $\mathcal{H}_{\operatorname{div},0}$ the closed subspace of $\mathcal{H}$  given by
\begin{equation}
\mathcal{H}_{\operatorname{div},0}:=\{\bU=(\bE, \bH, \bP, \dot{\bP}, \bM, \dot{\bM})\in \mathcal{H}\mid \nabla \cdot \bE= \nabla \cdot \bH= \nabla \cdot \bP =\nabla \cdot \dot{\bP}=\nabla \cdot {\bM}=\nabla \cdot {\dot{\bM}}=0 \}.
\end{equation}
\noindent Finally, the following proposition explains that if the initial has divergence free, the fields  solutions to \eqref{eq.schro} remains divergence free for any $t\geq0$. 
\begin{prop}\label{prop.div}
Let $\bU_0\in D(\bbA) \cap  \mathcal{H}_{\operatorname{div},0}$ and $\bU:\R^+\to \mathcal{H}$ be  the unique strong solution of \eqref{eq.schro} with $\bU_0$
as initial condition then $\bU(t)\in D(\bbA) \cap  \mathcal{H}_{\operatorname{div},0}$ for any $t\geq 0$.
If $\bU_0\in \mathcal{H}_{\operatorname{div},0}$  then the mild solution $\bU$ of \eqref{eq.schro} satisfies also  $\bU(t)\in \mathcal{H}_{\operatorname{div},0}$ for any $t\geq 0$.

\end{prop}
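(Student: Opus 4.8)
The plan is to deduce Proposition~\ref{prop.div} from the invariance of the closed subspace $\mathcal{H}_{\operatorname{div},0}$ under the semigroup $\{\mathcal{S}(t)\}_{t\geq 0}$, and to obtain the latter from the invariance of $\mathcal{H}_{\operatorname{div},0}$ under the resolvent of the generator $-\rmi\,\bbA$. I would invoke the standard fact that a closed subspace which is stable under $(\lambda I+\rmi\,\bbA)^{-1}$ for all $\lambda$ with $\Re\lambda$ large is automatically stable under the semigroup generated by $-\rmi\,\bbA$ (this follows, e.g., from the Euler--Yosida approximation $\mathcal{S}(t)=\lim_n\big[\tfrac{n}{t}(\tfrac{n}{t}I+\rmi\,\bbA)^{-1}\big]^n$, whose iterates preserve the subspace, together with the closedness of $\mathcal{H}_{\operatorname{div},0}$). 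Since $\lambda I+\rmi\,\bbA=\rmi\,(\bbA-\rmi\lambda I)$ and $\rmi\lambda\in\C^+$ precisely when $\Re\lambda>0$, it suffices to prove that $(\bbA-\omega I)^{-1}\,\mathcal{H}_{\operatorname{div},0}\subset\mathcal{H}_{\operatorname{div},0}$ for every $\omega\in\C^+$.

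First I would reuse the explicit resolvent construction carried out in the proof that $-\rmi\,\bbA$ is maximal dissipative. Fix $\omega\in\C^+$ and $\bF=(\be,\bh,\bp,\dot{\bp},\bm,\dot{\bm})\in\mathcal{H}_{\operatorname{div},0}$, and set $\bU=(\bbA-\omega I)^{-1}\bF\in D(\bbA)$. The components $\bP_j,\dot{\bP}_j,\bM_\ell,\dot{\bM}_\ell$ are given by the explicit linear formulas \eqref{eq.pj}, \eqref{eq.dotpj}, \eqref{eq.ml}, \eqref{eq.dotml}, in which the only fields entering are $\bE$, $\bH$ and the data $\be,\bp,\dot{\bp},\bh,\bm,\dot{\bm}$, multiplied by the nonzero scalars $q_{e,j}(\omega)^{-1}$, $q_{m,\ell}(\omega)^{-1}$. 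As the divergence is linear and commutes with multiplication by these constants, once $\nabla\cdot\bE=\nabla\cdot\bH=0$ is known the divergence-freeness of all remaining components is immediate from that of $\bF$. Everything therefore reduces to proving $\nabla\cdot\bE=\nabla\cdot\bH=0$.

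For this I would take the distributional divergence of the reduced equation \eqref{eq.E} for $\bE$. The double-curl term is annihilated, $\nabla\cdot(\nabla\times\nabla\times\bE)=0$, as is the curl term on the right-hand side, leaving $\omega\,\varepsilon(\omega)\,\nabla\cdot\bE=\rmi\,\nabla\cdot\bF_e$; and since $\bF_e$ is, through \eqref{eq.e2} and \eqref{eq.dotpj}, a constant-coefficient combination of $\be$, $\bp_j$ and $\dot{\bp}_j$, one has $\nabla\cdot\bF_e=0$. Because $\omega\mapsto\omega\,\varepsilon(\omega)$ is a Herglotz function it has strictly positive imaginary part on $\C^+$, hence $\omega\,\varepsilon(\omega)\neq0$, and dividing gives $\nabla\cdot\bE=0$. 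Substituting this into \eqref{eq.H}, using $\nabla\cdot\bF_h=0$ (the same argument via \eqref{eq.h2} and \eqref{eq.dotml}) and $\nabla\cdot(\nabla\times\bE)=0$, yields $\nabla\cdot\bH=0$. This establishes the resolvent invariance and hence, by the first paragraph, that $\mathcal{S}(t)\,\mathcal{H}_{\operatorname{div},0}\subset\mathcal{H}_{\operatorname{div},0}$ for all $t\geq0$; the statements for mild solutions ($\bU_0\in\mathcal{H}_{\operatorname{div},0}$) and strong solutions ($\bU_0\in D(\bbA)\cap\mathcal{H}_{\operatorname{div},0}$, where $\bU(t)\in D(\bbA)$ by Proposition~\ref{prop.wellposdness}) then follow at once from $\bU(t)=\mathcal{S}(t)\bU_0$.

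The main obstacle is conceptual rather than computational: one must invoke the correct abstract criterion linking resolvent invariance to semigroup invariance (checking its hypotheses on the whole right half-plane, which maximal dissipativity guarantees), and one must read all divergences in the distributional sense on $H(\curl;\R^3)$, where the division by $\omega\,\varepsilon(\omega)$ and $\omega\,\mu(\omega)$ is legitimated exactly by their non-vanishing Herglotz property on $\C^+$. A more hands-on alternative, at least for strong solutions, is to apply $\nabla\cdot$ directly to the time-dependent system \eqref{planteamiento Lorentz}: this kills every curl and produces a closed homogeneous linear ODE system in $C^1(\R^+,H^{-1})$ for the scalar fields $\nabla\cdot\bE,\nabla\cdot\bH,\nabla\cdot\bP_j,\nabla\cdot\bM_\ell$ with vanishing initial data, so they stay zero by uniqueness, the mild case following by density.
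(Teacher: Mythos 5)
Your argument is correct, but it follows a genuinely different route from the paper. The paper proves the proposition directly in the time domain: it first takes $\bU_0\in D(\bbA^2)$ so that the solution is regular enough to differentiate the system, applies $\nabla\cdot$ to \eqref{E Lorentz} and to $\partial_t$\eqref{P Lorentz} to obtain a closed homogeneous linear ODE system for the scalar fields $\nabla\cdot\mathbf{E},\nabla\cdot\mathbf{P}_j$ (and likewise for $\nabla\cdot\mathbf{H},\nabla\cdot\mathbf{M}_\ell$) with vanishing initial data, concludes these divergences stay zero by uniqueness, and then extends to $\bU_0\in\mathcal{H}_{\operatorname{div},0}$ by density and the contraction property of $\mathcal{S}(t)$ --- this is precisely the ``hands-on alternative'' you sketch in your last paragraph. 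Your main route instead establishes invariance of $\mathcal{H}_{\operatorname{div},0}$ under the resolvent $(\bbA-\omega I)^{-1}$, $\omega\in\C^+$, by taking the distributional divergence of the reduced equation \eqref{eq.E} (where the double curl and the curl of $\bF_h$ are annihilated and $\omega\varepsilon(\omega)\neq 0$ by the strict Herglotz property on $\C^+$), reading off the remaining components from the explicit formulas \eqref{eq.pj}--\eqref{eq.dotml}, and then passes from resolvent invariance to semigroup invariance via the Euler--Yosida exponential formula and closedness of $\mathcal{H}_{\operatorname{div},0}$. What your approach buys is a uniform treatment of mild and strong solutions with no regularity bootstrapping to $D(\bbA^2)$ and no separate density step, at the price of invoking the abstract resolvent-to-semigroup invariance criterion (whose hypotheses are indeed met here, since maximal dissipativity gives the resolvent on the whole right half-plane) and of reusing the resolvent construction from the maximal-dissipativity proof; the paper's computation is more elementary and self-contained but requires the extra regularization and density argument. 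Both are complete proofs.
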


\begin{proof}
We assume first that the initial condition  is regular enough, namely $\bU_0\in D(\bbA^2)$. 
Hence by Proposition \ref{prop.wellposdness},  the strong solution $\bU$ of \eqref{eq.schro} belongs to  $C^{2}(\R^+, \mathcal{H})\cap  C^{1}(\R^+, D(\bbA)) $ and thus the evolution  equation \eqref{eq.schro}  (or equivalently  the system of equations \eqref{planteamiento Lorentz}) holds also at $t=0$ and  one can differentiate it for $t\geq0$.
Taking the divergence in the distributional sens of  the equation \eqref{E Lorentz} for $t\geq 0$ and taking the divergence of the equations  $\partial_t(\eqref{P Lorentz})$ for $t\geq 0$ and $j \in \{ 1,\ldots, N_e\}$ leads to:
\begin{equation}\label{eq.div}
\partial_t\ \nabla \cdot \textbf{E}+\,\sum_{j=1}^{N_e}\,\Omega_{e,j}^2\,\partial_t\,\nabla \cdot \mathbf{P}_j=0 \   \mbox{ and } \  \partial_t^3\ \nabla \cdot \mathbf{P}_j+\alpha_{e,j}\,\partial_t^2\, \nabla \cdot \mathbf{P}_j+\omega_{e,j}^2\,\partial_t \nabla \cdot \mathbf{P}_j= \partial_t \nabla \cdot\mathbf{E}.
\end{equation}
Thus, substituting $\partial_t \nabla \cdot\mathbf{E}$ in  the second equation of \eqref{eq.div} gives 
\begin{equation}\label{eq.oscill}
 \partial_t^3\ \nabla \cdot \mathbf{P}_j+\alpha_{e,j}\,\partial_t^2\, \nabla \cdot \mathbf{P}_j+\omega_{e,j}^2\,\partial_t \nabla \cdot \mathbf{P}_j+\,\sum_{j=1}^{N_e}\,\Omega_{e,j}^2\,\partial_t\,\nabla \cdot \mathbf{P}_j=0 \mbox{ for }  j \in \{ 1,\ldots, N_e\} \mbox{  and $t\geq 0$},
\end{equation}
with initial conditions: $\nabla \cdot \mathbf{P}_j(\cdot, 0)= \partial_t \cdot \nabla \cdot \mathbf{P}_j(\cdot, 0)=0$ (since $\bU_0 \in \mathcal{H}_{\operatorname{div},0}$). Then taking the divergence of  equation  \eqref{P Lorentz} evaluated at $t=0$ yields $$ \partial_t ^2\cdot \nabla \cdot \mathbf{P}_j(\cdot, 0)=- \alpha_{e,j}\,\partial_t\, \nabla \cdot \mathbf{P}_j(\cdot, 0)-\omega_{e,j}^2\,\partial_t \nabla \cdot \mathbf{P}_j(\cdot, 0)+ \partial_t \nabla \cdot\mathbf{E}(\cdot, 0)=0.$$
Thus, from \eqref{eq.oscill}, one deduces  $\nabla \cdot \mathbf{P}_j(\cdot,t)=\nabla \cdot \partial_t \mathbf{P}_j(\cdot,t)=0$ for any $t\geq 0$ and  $j \in \{ 1,\ldots, N_e\}$. It follows  with \eqref{eq.div} that $\partial_t \nabla \cdot\mathbf{E}(\cdot, t)=0$ with $  \nabla \cdot\mathbf{E}(\cdot, 0)=0$. Therefore, one gets also $\nabla \cdot\mathbf{E}(\cdot, t)=0$.
Similarly using  \eqref{H Lorentz} and \eqref{M Lorentz} and their derivatives, one shows that: 
$\nabla \cdot \mathbf{M}_\ell(\cdot,t)=\nabla \cdot \partial_t \mathbf{M}_\ell(\cdot,t)=\nabla \cdot\mathbf{\bH}(\cdot, t)= 0$  $t\geq 0$ and  $\ell \in \{ 1,\ldots, N_e\}$ and concludes that $\bU(t)\in  D(\bbA) \cap  \mathcal{H}_{\operatorname{div},0}$ for any $t\geq 0$.
\\ [12pt]
If $\bU_0$ is less regular, i.e. $\bU_0$ in $\mathcal{H}_{\operatorname{div},0}$, one obtains that  $\bU(t)=S(t)\bU_0\in  \mathcal{H}_{\operatorname{div},0}$ from the previous reasoning  by using  a density argument on the initial condition and the fact that the elements $S(t)$ of the $C^{0}$ semi-group corresponding  to \eqref{eq.schro} are contractions. Moreover, if $\bU_0\in D(\bbA)$, one has, by  Proposition \ref{prop.wellposdness}, $\bU(t)\in  D(\bbA)$  and thus $\bU(t)\in D(\bbA) \cap \mathcal{H}_{\operatorname{div},0}$.
%
\end{proof}

\begin{rem}\label{rem.Drude}
The (Cauchy) problem \eqref{planteamiento Lorentz}  assumes that all  the resonance frequencies satisfy $\omega_{e,j},  \omega_{m,\ell} > 0$, in other words that  $\varepsilon(\omega)$ and $\mu(\omega)$ only contained Lorentz terms (in the sense of Section \ref{Drude_Lorentz}). However, all the results in this appendix still hold with a  similar proof  if they contain Drude terms. 
In that case,  if  $N_{d,e}$ and $N_{d,m}$  are the number of electric and magnetic Drude terms, one has  to redefine  $\mathcal{H}$ in \eqref{eqdefHilbertspace}
 as $$\mathcal{H}:=L^2(\mathbb{R}^3)^3\times L^2(\mathbb{R}^3)^3\times L^2(\mathbb{R}^3)^{3(N_e - N_{d,e})}
 \times L^2(\mathbb{R}^3)^{3N_e} \times L^2(\mathbb{R}^3)^{3(N_m - N_{d,m})}  \times L^2(\mathbb{R}^3)^{3N_m} $$ ($\bP_j$ (resp. $\bM_{\ell}$) are no longer unknowns of the evolution problem for Drude terms). 
\end{rem}

\end{document}